\tikzset{->-/.style={decoration={  markings,  mark=at position #1 with
    {\arrow{>}}},postaction={decorate}}}
\tikzset{-<-/.style={decoration={  markings,  mark=at position #1 with
    {\arrow{<}}},postaction={decorate}}}
\theoremstyle{plain}
\newtheorem{theorem}{Theorem}[section]
\newtheorem{lemma}[theorem]{Lemma}
\newtheorem{proposition}[theorem]{Proposition}
\theoremstyle{definition}
\newtheorem{definition}[theorem]{Definition}
\newtheorem{example}[theorem]{Example}
\newtheorem{remark}[theorem]{Remark}
\newtheorem{convention}[theorem]{Convention}
\numberwithin{equation}{section}
\numberwithin{equation}{section}
\newcommand\Old{\bgroup\markoverwith{\textcolor{red}{\rule[0.5ex]{2pt}{0.4pt}}}\ULon}
\def\<{\langle}
\def\>{\rangle}
\def\ZZ{\mathbb{Z}}
\newcommand{\oInt}{\overrightarrow{\operatorname{Int}}}
\newcommand{\oIntd}{\oInt^{\rho}}
\newcommand{\Hom}{\operatorname{Hom}\nolimits}
\newcommand{\Ext}{\operatorname{Ext}\nolimits}
\newcommand{\D}{\operatorname{\mathcal{D}}}
\def\numbers{\begin{enumerate}[label=\arabic*{$^\circ$}.]}
\def\ends{\end{enumerate}}
\def\wc{\widetilde{c}}
\def\wg{\widetilde{\gamma}}
\def\we{\widetilde{\eta}}
\def\ws{\widetilde{\sigma}}
\def\wt{\widetilde{\tau}}
\def\wa{\widetilde{\alpha}}
\def\wb{\widetilde{\beta}}
\def\h{\mathcal H}
\def\arrow{red}
\newcommand{\MCG}{\operatorname{MCG}}
\newcommand\Dehn[1]{\mathrm{D}_{#1}}
\def\dexc{red}
\def\M{\mathbf{M}}
\def\Y{\mathbf{Y}}
\def\ac{\mathbb{A}}
\renewcommand{\k}{\mathbf{k}}
\def\surf{\mathbf{S}}
\def\surfi{\surf^\circ}
\newcommand\Sp{\operatorname{Sp}}
\tikzset{->-/.style={decoration={  markings,  mark=at position #1 with
    {\arrow{>}}},postaction={decorate}}}
\tikzset{-<-/.style={decoration={  markings,  mark=at position #1 with
    {\arrow{<}}},postaction={decorate}}}
\def\nn{node{$\bullet$}}
\def\DEC{node[white]{$\bullet$}node[red]{$\circ$}}
\def\grad{\lambda}
\def\wCA{\widetilde{\operatorname{CA}}}
\def\gmsx{\gms\x}        
\def\x{_\vot}
\def\sg{\Lambda_{\mathbb{A}}}
\newcommand{\gms}{\surf^\grad}
\def\Dfang{\mathrm{D}^2_\vot}
\def\vot{\text{\Biohazard}}
\def\Vot{\text{\Cancer}}
\def\good{\mathcal{D}^@(\sg)}
\def\?{B}
\def\EG{\operatorname{EG}}
\def\xian{arc\;}
\def\CAS{\mathbb{S}}
\def\CA{\we}
\def\hua{\mathcal}
\newcommand{\tilt}[3]{{#1}^{#2}_{#3}}
\def\t{\tilde}
\def\beq{\begin{equation}}
\def\eeq{\end{equation}}
\def\ang{a}
\def\XX{T}
\begin{document}
\title[Quad as Stab of Graded Skew-gentle Algebras]{Quadratic Differentials as Stability Conditions of Graded Skew-gentle Algebras}
\author{Suiqi Lu}
\address{Lsq:
	Department of Mathematical Sciences,
	Tsinghua University,
    Beijing,
    China}
\email{lu-sq22@mails.tsinghua.edu.cn}

\author{Yu Qiu}
\address{Qy:
	Yau Mathematical Sciences Center and Department of Mathematical Sciences,
	Tsinghua University,
    Beijing,
    China.
    \&
    Beijing Institute of Mathematical Sciences and Applications, Yanqi Lake, Beijing, China}
\email{yu.qiu@bath.edu}

\author{Dongjian Wu}
\address{Wdj:
	Department of Mathematics, Graduate School of Science, The University of Osaka, Toyonaka Osaka, Japan}
\email{wu.dongjian.7cx@osaka-u.ac.jp}


\begin{abstract}
We prove that the principal component of the exchange graph of hearts of a graded skew-gentle algebra
can be identified with the corresponding exchange graph of S-graphs,
using the geometric models and the intersection formula in \cite{QZZ}.
Using the similar argument in \cite{BS, BMQS, CHQ},
we extend this identification to an isomorphism between the spaces of stability conditions and of quadratic differentials.
\end{abstract}

\keywords{Graded skew-gentle algebras, graded marked surfaces with binary, S-graphs, quadratic differentials, stability conditions}

\maketitle
\tableofcontents
\addtocontents{toc}{\setcounter{tocdepth}{1}}

\setlength\parindent{0pt}
\setlength{\parskip}{5pt}

\section{Introduction}
The notion of a stability condition on a triangulated category was introduced in \cite{B1}, based on the study of slope stability of vector bundles over curves and $\Pi$-stability of D-branes in string theory. After the developments of many mathematicians, the theory of stability conditions plays an important role in many branches of mathematics, such as mirror symmetry, Donaldson-Thomas invariants and cluster theory, etc.

Independently, Kontsevich and Seidel suggested that moduli spaces of quadratic differentials might have a close relationship with spaces of stability conditions on Fukaya-type categories of surfaces.
Since then, many results have been obtained in this area, including \cite{BS,HKK,KQ2,IQ2,BMQS,CHQ,CHQ2}.

In this paper, our main focus is on establishing connections between quadratic differentials and stability conditions of graded skew-gentle algebras.
This is a generalization of the result (on unpunctured case) of Haiden-Katzarkov-Kontsevich (=HKK) \cite{HKK} to the punctured case.

\subsection{Topological Fukaya categories and graded skew-gentle algebras}
Fukaya categories are initially from symplectic geometry. They also play a crucial role in the study of homological mirror symmetry. In the classical homological mirror symmetry by Kontsevich \cite{K}, there exists a derived equivalence between a mirror pair $(X, X^{\vee})$ given as follows:
\begin{equation}\label{eq:HMS}
    \D\mathcal F(X)\cong \D^b\,{\rm Coh}\,(X^{\vee}).
\end{equation}
Here, the left-hand side (A-side) represents the derived Fukaya category of $X$, while the right-hand side (B-side) is the bounded derived category of coherent sheaves on the mirror dual $X^{\vee}$.

In this paper, we will study a type of Fukaya categories: the topological Fukaya categories of Riemann surfaces with punctures.
In the unpunctured case, HKK \cite{HKK} constructed these categories as the topological Fukaya categories of graded marked surfaces $\surf^{\lambda}$.
They proved that the endomorphism algebra of any formal generators of the topological Fukaya category $\mathcal F(\mathbf{S}^{\lambda})$ is a graded gentle algebra.
Moreover, they gave a complete classification of objects and showed that the isomorphism classes of indecomposable objects correspond one-to-one to the equivalence classes of admissible curves equipped with local systems on $\mathbf{S}^{\lambda}$.
Hence $\surf^{\lambda}$ provides a geometric model for graded gentle algebras.
As applications, there are many study on the derived invariants of graded gentle algebras using geometric models, e.g. \cite{LP1,APS19}.
In \cite{IQZ}, a partial intersection formulas, i.e.
dimensions of homomorphism spaces between objects equal the intersection between the corresponding arcs is given (cf. also \cite{OPS18}).

In the punctured case, the topological Fukaya categories are given by derived categories of skew-gentle algebras, which were first introduced by Gei{\ss}-de la Pe\~{n}a \cite{GP}, as a generalization of gentle algebras.
These algebras are an important class of representation-tame finite dimensional algebras.
As typical examples of gentle algebras are type $A$ and $\widetilde{A}$ algebras,
the examples for skew-gentle algebras are type $D$ and $\widetilde{D}$ algebras.
The classification of indecomposable modules for skew-gentle algebras, as well as clannish algebras in general, has been achieved by Crawley-Boevey \cite{CB89}, Deng \cite{Deng} and Bondarenko \cite{B},
cf. also Bekkert-Marcos-Merklen \cite{BMM} and Burban-Drozd \cite{BD}.

Similarly to the gentle case, couple geometric models have also been studied for the skew-gentle case,
cf. \cite{A21,AB,L-FSV}. In particular, Barmeier-Schroll-Wang \cite{BSW} has given three equivalent descriptions of the partially wrapped Fukaya categories of orbifold surfaces, which includes the skew-gentle case. In this paper, we will use the geometric models of graded skew-gentle algebras developed in Qiu-Zhang-Zhou (=QZZ) \cite{QZZ}.

\subsection{Geometric models for graded skew-gentle algebras}
In \cite{QZZ}, they use two geometric models.
The first one utilizes the conventional punctured marked surfaces with gradings and full formal arc systems. This approach provides a geometric interpretation for the classification of indecomposable objects in the perfect derived categories of graded skew-gentle algebras.
For the second model, QZZ proposed a novel model to understand the $\ZZ_2$-symmetry at punctures.
They replaced each puncture $P$ by a boundary component with an open marked point and a closed marked point, known as a binary $\Vot_P$. Moreover, they imposed the condition that the square of the Dehn twist along $\Vot_P$ equals the identity as shown in \Cref{fig:000} (cf. \cite[Figure 1]{QZZ}).
This geometric model is called graded marked surfaces with binary (=GMSb).
While the two models are equivalent when classifying objects (via arcs/curves),
the GMSb model provides a better interpretation for the intersection formula (cf. \Cref{thm:int formula} for its dual version).

\begin{figure}[htbp]
	\begin{tikzpicture}[scale=.3]
		\draw[ultra thick,fill=gray!10] (90:2) ellipse (1.5) node {$\Vot_P$};
    \draw[blue] (90:8) .. controls +(-45:4) and +(-30:7) .. (90:.5);
    \draw[cyan, ultra thick] (90:8) .. controls +(225:4) and +(-150:7) .. (90:.5);
    \draw[blue] (90:8)\nn;
    \draw[blue] (90:.5)\nn;
	\draw[blue]	(90:.25) node[black,below]{$m_P$};
	\draw[blue] (0,3.5)node[black,above]{$y_P$}\DEC;
	\draw (6.5,2)node{\Huge{=}};
    \begin{scope}[shift={(13,0)}]
	\draw[ultra thick,fill=gray!10] (90:2) ellipse (1.5) node {$\Vot_P$};
    \draw[blue] (90:8) .. controls +(-45:5) and +(-30:7) .. (90:.5);
    \draw[cyan, ultra thick] (90:8)
        .. controls +(-30:7) and +(-30:9) .. (0,-1)
        .. controls +(150:5) and +(160:3) .. (0,4.5)
        .. controls +(-20:4) and +(-15:2) .. (0,.5);
    \draw[blue] (90:8)\nn;
    \draw[blue] (90:.5)\nn;
	\draw[blue] (0,3.5)\DEC;
    \end{scope}	
    \end{tikzpicture}
\caption{$\Dehn{\Vot_P}^2$-action}\label{fig:000}
\end{figure}

In this paper, we will fix a GMSb $\gmsx$ and a full formal closed arc system $\ac^\ast$,
which determines a graded skew-gentle algebra $\Lambda_{\ac}$.
Conversely, for any graded skew-gentle algebra, one can reconstruct the corresponding GMSb with a full formal closed arc system. The dual version of the intersection formula by QZZ is a bijection denoted by $X$ from graded closed arcs on $\gmsx$ up to the $\mathbb{Z}_2$-symmetry to arc objects in $\mathcal{D}^b(\Lambda_{\ac})$, such that the intersection numbers between two graded closed arcs are identified with the dimensions of Hom-spaces between the corresponding two objects.

We will study finite hearts with simple tilting (cf. \cite[Definition 3.7]{KQ})
in the bounded derived category $\mathcal{D}^b(\Lambda_{\ac})$
and show that they correspond to S-graphs with flip on $\gmsx$ (cf. \cite[\S~6]{HKK} and \cite[\S~2]{CHQ}).
Recall that a mixed-angulation, consisting of open arcs, divides the surfaces into polygons such that each of which contains a closed marked point (cf. \cite{CS});
an S-graph $\CAS$ is the dual of which.
Additionally, we consider graded arcs and the closed arcs in an S-graph are required
to have positive intersection indices of intersection between them.

Our first result is that the principal component ${\rm EG}^{\circ}(\gmsx)$ is isomorphic to
the component ${\rm EG}^{\circ}\mathcal{D}^b(\sg)$ containing the canonical heart.

\begin{theorem}[\Cref{main}]
The arc-object correspondence $X_?$ in \Cref{thm:int formula} induces an isomorphism of oriented graphs
\[
\begin{array}{rcl}
X:\,\,\,\,{\rm EG}^{\circ}(\gmsx) & \to & {\rm EG}^{\circ}\mathcal{D}^b(\sg) \\
\mathbb{S}=\{\CA\} & \mapsto & \mathcal{H}_{\mathbb{S}}=\langle X_{\CA}|\,\CA\in\CAS\rangle.
   \end{array}
\]\label{main0}
\end{theorem}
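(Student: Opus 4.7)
The plan is to pass through the language of simple-minded collections (SMCs) and then match S-graph flips with simple HRS tilts of hearts. First, for any S-graph $\mathbb{S}=\{\CA\}_{\CA\in\mathbb{S}}$ on $\gmsx$, I would verify that $\mathcal{H}_{\mathbb{S}}=\langle X_{\CA}\mid\CA\in\mathbb{S}\rangle$ is genuinely a finite heart in $\mathcal{D}^b(\sg)$ whose simples are exactly the $X_{\CA}$. By the dual intersection formula of \cite{QZZ} (i.e.\ \Cref{thm:int formula}), graded intersection numbers of closed arcs compute $\Hom^\bullet$-dimensions between the associated objects. The defining property of an S-graph---strictly positive intersection indices between distinct arcs---forces $\Hom^{\le 0}(X_{\CA},X_{\CA'})=0$ for $\CA\neq\CA'$, while each $X_{\CA}$ is a brick by the self-intersection data of $\CA$. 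Consequently $\{X_{\CA}\}$ is an SMC and, by the Koenig--Yang bijection, generates the desired finite heart admitting simple tilting.

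Next I would show that the flip of $\mathbb{S}$ at an arc $\CA$ is sent by $X$ to the forward (respectively backward) simple HRS tilt of $\mathcal{H}_{\mathbb{S}}$ at $X_{\CA}$. Locally, a flip rotates one endpoint of $\CA$ across an adjacent polygon to produce a new arc $\CA'$, with grading fixed by the graded flip rule. Through the intersection formula, $X_{\CA'}$ is identified with the cone of a universal (co)extension among the neighbouring simples, which is precisely the new simple of the tilted heart. Checking that grading conventions align ensures that forward flips realise forward tilts, and that the resulting arc collection is again a bona fide S-graph whose positivity of intersection indices is preserved.

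With these two pieces in place, the isomorphism is built inductively. The canonical heart $\mod\sg$ in $\mathcal{D}^b(\sg)$ corresponds under $X$ to the S-graph dual to the full formal closed arc system $\ac^\ast$, providing a common root for both principal components. Because every vertex of $\EG^\circ\mathcal{D}^b(\sg)$ is reached from this root by a finite sequence of simple tilts---each realised on the geometric side by a flip---the map $X$ extends compatibly to all of $\EG^\circ(\gmsx)$; conversely, every flip is mirrored by a simple tilt, so the oriented edges match. Injectivity on vertices is immediate, since an S-graph is recovered from its arc set via the bijection $X_?$ and a finite heart is recovered from its simples.

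The main obstacle lies in the local geometric/algebraic compatibility near the binaries $\Vot_P$. The $\mathbb{Z}_2$-symmetry imposed by $\Dehn{\Vot_P}^2=\id$ produces two distinct lifts of any arc ending at a puncture, and one must verify that the flip rule respects this $\mathbb{Z}_2$-action in exactly the way required for the tilted simple $X_{\CA'}$ to be the intended object rather than its $\mathbb{Z}_2$-partner. This subtlety is absent in the gentle (unpunctured) case of \cite{HKK}; once it is controlled, the global induction proceeds essentially as in \cite{BS,BMQS,CHQ}.
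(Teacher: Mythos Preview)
Your description of the flip is the wrong one: you write that ``a flip rotates one endpoint of $\CA$ across an adjacent polygon to produce a new arc $\CA'$''. That is the silting (open-arc) mutation, dual to what is needed here. For S-graphs (\Cref{flip}), the arc $\CA$ itself only shifts to $\CA[1]$; it is \emph{every other} arc $\wa\in\CAS$ that is replaced by the smoothing $\wa\wedge^1\CA$. Correspondingly, under simple tilting at $S=X_{\CA}$ the simple $S$ becomes $S[1]$ while each other simple $T$ is replaced by $T^\sharp=\cone\bigl(T\to S[1]\otimes\Ext^1(T,S)\bigr)$. So the object to be matched with the flipped arc is $T^\sharp$ for each $T\neq S$, not a single new ``$\CA'$''.

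More seriously, the step ``through the intersection formula, $X_{\CA'}$ is identified with the cone of a universal (co)extension'' is not justified. The formula of \Cref{thm:int formula} computes $\dim\Hom$, not morphisms or cones; it cannot by itself certify that $X_{\wa_{\CA}^\sharp}\cong T^\sharp$. The paper handles this in two steps: first it matches classes in $K_0$ via \Cref{K0 vs H1}, obtaining $\xi^{-1}[T^\sharp]=[(\CA_T)_{\CA_S}^\sharp]$ in $H_1(\gmsx)$; second --- and this is the real work --- it verifies by an exhaustive case analysis (Appendix~\ref{appendix-2}, separating binary-free and binary configurations) that the smoothed arc $(\CA_T)_{\CA_S}^\sharp$ is again admissible and unknotted, hence lies in the domain of $X$, so that $T^\sharp$ is an arc object. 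Equality of $K$-classes alone does not determine an object, and your SMC/Koenig--Yang shortcut does not bypass this: even granting that every S-graph yields an SMC (for which you would also need to argue generation of $\mathcal D^b(\sg)$, not just Hom-vanishing), you still have two hearts $\mathcal H_{\mathbb S_{\CA}^\sharp}$ and $(\mathcal H_{\mathbb S})_{X_{\CA}}^\sharp$ whose simples have matching $K$-classes, and you must identify them object-by-object. That identification is precisely the content of the case analysis near binaries that you flag as ``the main obstacle'' but do not carry out.

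In short, the paper's proof is an induction along flips starting from the canonical heart, using the $K_0$/$H_1$ isomorphism together with the explicit Appendix~\ref{appendix-2} verification that each new simple remains an arc object; your outline replaces this with an appeal to the intersection formula that the formula cannot support.
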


\subsection{Quadratic differentials and stability conditions}
As an application, we can establish an isomorphism between the moduli space of quadratic differentials related to a GMSb $\gmsx$ and the space of stability conditions on $\mathcal D^b(\sg)$.

We introduce the moduli space of $\gmsx$-framed quadratic differentials, which is denoted as $\mathrm{FQuad}(\gmsx)$. The key difference from the usual frame is that we also have the ${\rm D}^2_{\Vot_P}$-identification. Denote by $\mathrm{Stab}(\mathcal D^b(\sg))$ the space of stability conditions on $\mathcal D^b(\sg)$, and the principal component of $\mathrm{Stab}(\mathcal D^b(\sg))$ corresponding to $\mathrm{EG}^{\circ}(\gmsx)$ is denoted by $\mathrm{Stab}^{\circ}(\mathcal D^b(\sg))$. We upgrade the isomorphism $X$ in \Cref{main0} to an injection between complex manifolds. The image of this injection is the principal component $\mathrm{Stab}^{\circ}(\mathcal D^b(\sg))$,  which is a generic-finite component corresponding to $\mathrm{EG}^{\circ}(\gmsx)$. The following theorem summarizes our main result:

\begin{theorem}[\Cref{main2}]
There exists an injection $\iota$ between the moduli spaces
\[
\mathrm{FQuad}^{\circ}(\gmsx)\to\mathrm{Stab}(\mathcal D^b(\sg)),
\]
which is extended by the isomorphism $X$ in \Cref{main0}. In particular, the image of $\iota$ is $\mathrm{Stab}^{\circ}(\mathcal D^b(\sg))$, which turns out to be the generic-finite component corresponding to $\mathrm{EG}^{\circ}(\gmsx)$.
\end{theorem}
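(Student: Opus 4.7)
The plan is to promote the combinatorial isomorphism $X$ of \Cref{main0} between exchange graphs to a holomorphic map $\iota$ between complex manifolds, following the template established in \cite{BS, BMQS, CHQ}. Both $\mathrm{FQuad}^\circ(\gmsx)$ and $\mathrm{Stab}^\circ(\mathcal{D}^b(\sg))$ admit cell decompositions indexed by the vertices of the common exchange graph: on the quadratic differential side, a vertex corresponds to the open stratum of differentials whose horizontal strip decomposition has a fixed combinatorial type (an S-graph $\CAS$); on the stability side, it corresponds to the set of stability conditions whose heart is a fixed finite heart $\h_\CAS = \langle X_\CA \mid \CA \in \CAS\rangle$. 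The first step is to define $\iota$ cell-by-cell.

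On the cell associated with an S-graph $\CAS$, I would define $\iota$ by assigning to a framed quadratic differential $\varphi$ the stability condition whose heart is $\h_\CAS$ and whose central charge on the simple $X_\CA$ is the period
\[
Z(X_\CA) = \int_\CA \sqrt{\varphi}
\]
of $\varphi$ along the saddle connection $\CA$ (with the grading of $\CA$ determining the branch of the square root). Because the closed arcs in an S-graph form a basis of the relevant lattice and the periods provide a system of holomorphic coordinates on the cell of $\mathrm{FQuad}^\circ$, this is a local biholomorphism onto its image cell in $\mathrm{Stab}^\circ$. The non-trivial point, relative to the unpunctured case of \cite{HKK}, is to verify compatibility with the $\mathrm{D}^2_{\Vot_P}$-identification at each binary: the period is well-defined precisely because the two arcs identified under $\mathrm{D}^2_{\Vot_P}$ are sent to the same object by $X$ (as recorded in the $\mathbb{Z}_2$-quotient in \Cref{thm:int formula}).

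Next I would show these cell-wise maps glue into a continuous (in fact holomorphic) map on all of $\mathrm{FQuad}^\circ(\gmsx)$. The codimension-one walls of the cell decomposition on the quadratic differential side correspond to degenerations in which a single horizontal saddle connection shrinks, i.e.\ a flip of the S-graph along one edge; under $X$ this is matched with the codimension-one wall in $\mathrm{Stab}^\circ$ where a single simple object becomes semistable, i.e.\ a simple tilt of the heart. The isomorphism of oriented graphs in \Cref{main0} guarantees that $\iota$ agrees from both sides of each such wall, so $\iota$ extends across all walls and thus defines a global holomorphic map $\iota\colon \mathrm{FQuad}^\circ(\gmsx) \to \mathrm{Stab}(\mathcal{D}^b(\sg))$ whose image lies in $\mathrm{Stab}^\circ(\mathcal{D}^b(\sg))$.

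Finally, injectivity and the identification of the image with the principal component follow by a standard deformation argument. Locally $\iota$ is a biholomorphism by the period-coordinate description, and globally the image is open (since $\iota$ is a local homeomorphism onto its image), connected (as $\mathrm{FQuad}^\circ$ is connected) and contains the base cell, so it is contained in $\mathrm{Stab}^\circ$. To see it equals $\mathrm{Stab}^\circ$, I would use that any stability condition in $\mathrm{Stab}^\circ$ lies in some cell indexed by a finite heart reached from the canonical heart by a sequence of simple tilts, and pull back its central charge to read off the periods of a unique framed differential; this also gives injectivity. The generic-finite statement is then a direct consequence: since each cell of $\mathrm{Stab}^\circ$ is the image of exactly one cell of $\mathrm{FQuad}^\circ$ under the local biholomorphism $\iota$, the fiber over a generic stability condition is a single point. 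The main obstacle I foresee is the wall-crossing compatibility at binaries: one must check carefully that saddle-connection degenerations involving a binary correspond correctly to simple tilts in $\mathcal{D}^b(\sg)$, so that the $\mathrm{D}^2_{\Vot_P}$-identification on $\mathrm{FQuad}^\circ$ is precisely what makes $\iota$ single-valued rather than giving a double cover.
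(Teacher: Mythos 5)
Your chamber-by-chamber definition of $\iota$ (heart $\mathcal{H}_{\mathbb{S}}$ attached to the S-graph of a saddle-free differential, central charge given by periods $\int\sqrt{\varphi}$ via the identification $\xi:K_0(\mathcal D^b(\sg))\cong H_1(\gmsx)$), together with the wall-crossing compatibility coming from the exchange-graph isomorphism of \Cref{main}, is exactly how the paper starts: this is the definition of $\iota$ on the saddle-free locus $B_0^{\circ}(\gmsx)$ and its extension to $B_2^{\circ}$ (the paper phrases the latter via the compatible $\CC$-actions rather than explicit wall-crossing, but the content is the same). Your observation that the $\Dfang$-identification in the definition of $\mathrm{FQuad}(\gmsx)$ is what makes $\iota$ single-valued at the binaries is also in line with the paper's setup.

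The genuine gap is in the step ``$\iota$ extends across all walls and thus defines a global holomorphic map''. The union of the open chambers and the codimension-one walls does not exhaust $\mathrm{FQuad}^{\circ}(\gmsx)$: there are framed differentials with several saddle trajectories and, more seriously, with recurrent trajectories, for which the horizontal foliation gives no strip decomposition and which do not lie on a wall separating two chambers in the way your gluing argument requires. Continuity of a map defined off a locus of real codimension at least two does not by itself give an extension, and this is precisely where the paper (following \cite{BS,BMQS,CHQ}) invokes additional structure: the stratification $B_0^{\circ}=B_1^{\circ}\subset B_2^{\circ}\subset\cdots$ by $r_q+2s_q$, the ascending chain property bounding $r_q+2s_q$ by $4\dim H_1(\gmsx)$, and the walls-have-ends statement (Proposition \ref{wall has ends}), which lets one extend $\iota$ inductively from $B_p^{\circ}$ to $B_{p+1}^{\circ}$ and, in particular, shows every path is homotopic into $B_2^{\circ}$. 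Without this inductive extension your map is only defined on $B_2^{\circ}$, and the concluding deformation argument is then also incomplete: openness of the image is fine, but closedness of $\iota$ (needed to conclude that the image is the whole component $\mathrm{Stab}^{\circ}(\mathcal D^b(\sg))$ and that it is generic-finite) requires control of exactly these deeper strata; likewise ``read off the periods of a unique framed differential'' uses that the period map is only a local homeomorphism, so injectivity cannot be reduced to it without the global structure supplied by the stratification argument.
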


\subsection{Contents}
This paper is organized as follows:
\begin{itemize}
    \item \Cref{pre}: we provide a review of the fundamental concepts, including hearts, simple tiltings, stability conditions, and graded skew-gentle algebras.
    \item \Cref{gmsb}: we revisit basic concepts of GMSb and the intersection formula.
    \item \Cref{S-graph}: we introduce S-graphs and their flips on a GMSb and construct an isomorphism between a principal component of the exchange graph of S-graphs and that of hearts.
    \item \Cref{quad_vs_stab}: we establish an isomorphism between the complex manifolds of quadratic differentials and stability conditions of a graded skew-gentle algebra.
\end{itemize}


\subsection*{Acknowledgement}

Lsq and Wdj would like to express their sincere gratitude to Qy for introducing them to this research area and providing invaluable guidance and supervision throughout the project.
Qy is supported by National Natural Science Foundation of China (Grant No. 12425104, No.12031007 and No.12271279) and National Key R\&D Program of China (No.2020YFA0713000).
Wdj is supported by JSPS KAKENHI KIBAN(S) 21H04994.

\section{Preliminaries}
\label{pre}
\subsection{Hearts and simple tiltings}
Recall that a \emph{heart} $\mathcal{H}$ in a triangulated category $\mathcal{D}$ is defined as an additive subcategory which satisfies (cf. \cite{B1}):
\begin{itemize}
\item ${\rm Hom}_{\mathcal{D}}(\mathcal{H}[m],\mathcal{H})=0$ for any positive integer $m$;
\item for each object $E$ in $\mathcal{D}$, there exists a canonical filtration (called \emph{Harder-Narasimhan filtration})
\begin{equation}
\begin{tikzcd}
0 = E_0 \ar[rr]   &&  E_1 \ar[dl] 
 \ar[r] & \cdots  \ar[r] & E_{m-1} \ar[rr] && E_m \ar[dl] = E,\\
& A_1 \ar[ul,dashed] &&&& A_m \ar[ul,dashed]
\end{tikzcd}\label{HN-filt}
\end{equation}
where $A_i \in \mathcal H[k_i]$ and $k_1 > \cdots > k_m$ are integers.
\end{itemize}

The $k$-th homology of an object $E$ in $\mathcal{D}$ with respect to the heart $\mathcal{H}$ is defined using Harder-Narasimhan filtration as:
\[
 H_k(E)=
 \begin{cases}
   A_i[-k_i], & \text{if $k=k_i$,} \\
   0, & \text{otherwise.}
 \end{cases}
\]

It is worth mentioning that a heart of a triangulated category is an abelian category (cf. \cite{BBD}).

Recall that a \emph{torsion pair} in an abelian category $\mathcal{C}$ is defined as an ordered pair of full subcategories $\langle \mathcal{T}, \mathcal{F} \rangle$ satisfying:
\begin{itemize}
    \item $\Hom_{\hua{C}}(\hua{T},\hua{F})=0$;
    \item for every object $E$ in $\hua{C}$, there exists a short exact sequence
$0 \to T \to E \to F \to 0$, where $T \in \hua{T}$ and $F \in \hua{F}$.
\end{itemize}

For a triangulated category $\mathcal{D}$ with a heart $\mathcal{H}$, if $\langle\hua{T},\hua{F}\rangle$ is a torsion pair in $\h$, then the \emph{forward tilt} of $\mathcal{H}$ with respect to $\langle \mathcal{T}, \mathcal{F} \rangle$ is defined as
\[
    \h^\sharp
    :=\{ E \in \hua{D}\,|\,H_0(E) \in \hua{T}, H_1(E) \in \hua{F}
        \mbox{ and } H_{i}(E)=0 \mbox{ otherwise} \}.
\]
The \emph{backward tilt} $\mathcal{H}^\flat$ of $\mathcal{H}$ with respect to $\langle \mathcal{T}, \mathcal{F} \rangle$ is defined dually. These two full subcategories $\mathcal{H}^\sharp$ and $\mathcal{H}^\flat$ are both hearts in $\hua{D}$ (cf. \cite{HRS}). It is worth noting that $\mathcal{H}^\flat = \mathcal{H}^\sharp[-1]$.

In particular, a forward tilt of a heart $\h$ is called \emph{simple} if the torsion-free part $\hua{F}$ is generated by a single rigid simple $S$ (i.e. $\Ext^1_{\mathcal{D}}(S,S)=0$). The new heart is denoted by $\tilt{\h}{\sharp}{S}$. Similarly, a backward tilt of $\h$ is called simple if the torsion part $\hua{T}$ is generated by a rigid simple $S$, and the new heart is denoted by $\tilt{\h}{\flat}{S}$ (cf. \cite{KQ}).

For a heart $\mathcal H$, we denote by
${\rm Sim}\,\mathcal H$ a complete set of non-isomorphic simples in $\mathcal H$. A heart $\mathcal H$ is called \emph{finite} if ${\rm Sim}\,\mathcal{H}$ is a finite set that generates $\mathcal{H}$ through extensions, or equivalently, every object $M$ in $\mathcal{H}$ has a finite filtration with simple factors.

The \emph{total exchange graph} $\EG(\D)$ of a triangulated category $\D$ is defined as a oriented graph, whose vertices are all hearts in $\D$ and edges correspond to simple forward tiltings between these hearts. The \emph{principal component} $\EG^{\circ}(\mathcal{D})$ is defined as the connected component of $\EG(\mathcal{D})$ that contains the canonical heart (cf. \cite{KQ}).

\subsection{Stability conditions}
Let's review Bridgeland's concept of a stability condition on a triangulated category (cf. \cite{B1}).

A \emph{stability condition} $\sigma=(Z, \mathcal P)$ on a triangulated category $\mathcal D$ consists of a group homomorphism $Z: K(\mathcal D)\to \mathbb C$, called the \emph{central charge}, and full additive subcategories $\mathcal P(\phi)\subset \mathcal D$ for each $\phi\in \mathbb R$, known as the \emph{slicings}. They satisfy:

\begin{enumerate}
\item[(a)] if $0\ne E\in \mathcal P(\phi)$, then $Z(E) \in \mathbb R_{>0}\cdot{\rm exp}({\rm i}\pi\phi)$;
\item[(b)] for all $\phi\in \mathbb R$, $\mathcal P(\phi +1) = \mathcal P(\phi)[1]$;
\item[(c)] if $\phi_1>\phi_2$ and $A_i\in \mathcal P(\phi_i)\,(i=1,2)$, then ${\rm Hom}_{\mathcal D}(A_1, A_2) = 0$;
\item[(d)] for any $0\neq E\in \mathcal D$, there is a finite sequence of real numbers
\[
\phi_1>\phi_2>\dots>\phi_m
\]
and Harder-Narasimhan filtration as \Cref{HN-filt} with $A_i\in\mathcal P(\phi_i)$ for all $1\leq i\leq m$.
\end{enumerate}

An object $E\in \mathcal P(\phi)$ for some $\phi\in \mathbb R$ is called \emph{semistable}. Furthermore, if $E$ is simple in $\mathcal P(\phi)$, then it is called \emph{stable}.

We will only consider stability conditions that satisfy the \emph{support property} (cf. \cite{KS}), and the set of all stability conditions on $\mathcal D$ that satisfy this support property is denoted by $\mathrm{Stab}(\mathcal D)$. The set $\mathrm{Stab}(\mathcal D)$ has a natural complex manifold structure with a local coordinate
\[
    Z\in \mathrm{Hom}_{\mathbb Z}(K(\mathcal D),\mathbb C).
\]

There is an alternative definition of stability conditions. A \emph{stability function} on an abelian category $\mathcal C$ is a group homomorphism $Z : K_0(\mathcal C)\to \mathbb C$ that satisfies: for each nonzero object $E\in\mathcal C$, the complex number $Z(E)$ lies in the semi-closed upper half plane
\[
\mathbb H_{+}=\{r\mathrm{exp}({\rm i}\pi\phi)\,|\,r>0,\,0<\phi\le1\}\subset\mathbb C,
\]
where $K_0(\mathcal C)$ is the Grothendieck group of $\mathcal C$. According to Bridgeland \cite[Proposition 5.3]{B1}, a stability condition can also be defined on a triangulated category $\mathcal D$ in terms of a pair $(\mathcal H,Z)$, where $\mathcal H$ is a heart of $\mathcal D$ and $Z$ is a stability condition on $\mathcal H$ that satisfies the Harder-Narasimhan property.

The set $\rm{Stab}(\mathcal D)$ has a natural $\mathbb C$-action: for any $s\in\mathbb{C}$, we have
\[
s \cdot (Z, \mathcal P) = (Z\cdot {\rm exp}({\rm i}\pi s), \mathcal P_{-\mathrm{Re}(s)}),
\]
where $\mathcal P_{-\mathrm{Re}(s)}(m) = \mathcal P(m-\mathrm{Re}(s))$ for any $m\in\mathbb R$. A connected component $\mathrm{Stab}_0(\mathcal D)$ of $\mathrm{Stab}(\mathcal D)$ is called  \emph{generic-finite} if there exists a connected component $\mathrm{EG}_0(\mathcal D)$ of $\mathrm{EG}(\mathcal D)$, whose vertices are all finite hearts, such that
\[
\mathrm{Stab}_0(\mathcal D)=\mathbb C\cdot\bigcup_{\mathcal H\in\mathrm{EG}_0(\mathcal D)}\mathrm{U}(\mathcal H),
\]
where $\mathrm{U}(\mathcal H)$ is the subspace in $\mathrm{Stab}(\mathcal D)$ consisting of those stability conditions $\sigma=(\mathcal H, Z)$ whose central charge $Z$ takes values in $\mathbb H=\{z\in\mathbb C\ |\ \mathrm{Im}(z)>0\}$.

\subsection{Graded skew-gentle algebras}
Let's review the concept of a graded skew-gentle algebra (cf. \cite{QZZ}). From now on, $\k$ is a fixed algebraically closed field.

A \emph{graded quiver} is a quiver $Q=(Q_0, Q_1, s, t)$, where $Q_0$ is the set of vertices, $Q_1$ is the set of arrows, and $s, t: Q_1 \rightarrow Q_0$ are the start and terminal functions respectively, equipped with a grading, which is a function $|\cdot|: Q_1 \rightarrow \mathbb{Z}$.

A pair $(Q, I)$ of a graded quiver $Q$ and a relation set $I$ is called a \emph{gentle pair} if the algebra $\k Q/I$ is a \emph{gentle algebra}, or equivalently, they satisfy:
\begin{itemize}
    \item every element in $I$ is a path of length 2;
    \item each vertex in $Q_0$ has at most two incoming arrows and at most two outgoing arrows;
    \item for any arrow $\alpha$ in $Q_1$, there is at most one arrow $\beta$ (resp. $\gamma$) such that $\alpha\beta\in I$ (resp. $\gamma\alpha\in I$);
    \item for any arrow $\alpha$ in $Q_1$, there is at most one arrow $\beta$ (resp. $\gamma$) such that $\alpha\beta\notin I$ (resp. $\gamma\alpha\notin I$).
\end{itemize}

A triple $(Q,\Sp,I)$, consisting of a graded quiver $Q$, a subset $\Sp$ of $Q_0$, and a relation set $I$, is called a \emph{graded skew-gentle triple} if the pair $(Q^{\mathrm{sp}},I^{\mathrm{sp}})$ is a gentle pair, where
    \begin{itemize}
        \item $Q_0^{\mathrm{sp}}=Q_0$,
        \item $Q_1^{\mathrm{sp}}=Q_1\cup\{\varepsilon_i\mid i\in \Sp\}$, with $s(\varepsilon_i)=t(\varepsilon_i)=i$ and $|\varepsilon_i|=0$,
        \item $I^{\mathrm{sp}}=I\cup\{\varepsilon_i^2\mid i\in \Sp \}$.
    \end{itemize}
    A finite-dimensional graded algebra $\Lambda$ is called a \emph{graded skew-gentle algebra} if it is Morita equivalent to
   $$\Lambda(Q,\Sp,I):=\k Q^{\mathrm{sp}}/\<I\cup\{\varepsilon_i^2-\varepsilon_i\mid i\in \Sp\}\>,$$
    where $(Q,\Sp,I)$ is a graded skew-gentle triple. In particular, if $\Sp$ is empty, then the graded algebra $\Lambda$ is gentle.

\begin{example}
Let $Q$ be the following graded $A_3$ quiver
\[
\begin{tikzcd}
  1 \ar[r,"a"] & 2 \ar[r,"b"] & 3
\end{tikzcd}
\]
with $\Sp=\{3\}$ and $I=\varnothing$. Then the quiver $Q^{\mathrm{sp}}$ is the following quiver
\[
\begin{tikzcd}
  1 \ar[r,"a"] & 2 \ar[r,"b"] & 3 \ar[loop right, distance=2em, "\varepsilon"]
\end{tikzcd}
\]
with $|\epsilon|=0$ and $I^{\mathrm{st}}=\{\varepsilon^2\}$. $(Q^{\mathrm{sp}},I^{\mathrm{sp}})$ is a gentle pair, and now we get a graded skew-gentle algebra
\[\k Q^{\mathrm{sp}}/\<\varepsilon^2-\varepsilon\>,\]
which is isomorphic to the path algebra of the classical $D_4$ quiver. The quiver $Q^{\mathrm{sp}}$ with relation $\varepsilon^2-\varepsilon$ is called the graded skew-gentle $D_4$ quiver. Similarly, we can also construct the graded skew-gentle $\widetilde{D}_4$ quiver from graded $A_3$ quiver by adding two idempotent loops at vertices $1$ and $3$.
\end{example}

\section{Graded marked surfaces with binary}
\label{gmsb}
In this section, we review some concepts of graded marked surfaces with binary and the intersection formula ${\rm Int}\,=\,{\rm dim}\,{\rm Hom}$ following \cite{QZZ}.
\subsection{Basic notions of GMSb}
Recall that a \emph{graded marked surface with binary} (GMSb) $\gmsx=(\surf,\M,\Y,\vot,\lambda)$ is a compact connected oriented surface $\surf$ with a non-empty boundary $\partial \surf$, equipped with the following parts:
\begin{itemize}
\item a finite set $\M\subset\partial \surf$ of \emph{open marked points} and a finite set $\Y\subset\partial\surf$ of \emph{closed marked points}. In each component of $\partial \surf$, marked points in $\M$ and $\Y$ are alternative;
\item a finite set $\vot$ consists of \emph{binaries}, where each binary $\Vot_P$ is a boundary component containing one open marked point $m_P$ and one closed marked point $y_P$ on it;
\item a \emph{grading} $\grad$ on $\surf$, represented by a class in $H^1\left(\mathbb{P}T\surfi,\ZZ\right)$, where $\surfi=\surf\backslash\partial\surf$, and $\mathbb{P}T\surfi$ is the projectivized tangent bundle. It satisfies:
\begin{itemize}
    \item $\grad$ takes a value of 1 on each clockwise loop ${p}\times\mathbb{R}\mathbb{P}^1$ on $\mathbb{P}T_p\surfi$;
    \item $\grad$ takes a value of $1$ on each clockwise loop $l_P\times\{x\}$ on $\surf$ around any binary $\Vot_P$, where $x\in\mathbb{RP}^1$, or equivalently,
\[
    \grad[ l_P\times\{x\} ]=1.
\]
\end{itemize}
Note that each grading corresponds to a section of $\mathbb{P}T\surfi$.
\end{itemize}

An \emph{isomorphism of graded surfaces} $\surf^{\lambda}\to\underline\surf^{\lambda'}$ is a pair $(f,h)$, where $f:\surf\to\underline{\surf}$ is an orientation-preserving local diffeomorphism and $h$ is a homotopy class of paths in the space of sections of $\mathbb PT\surf^{\circ}$ from $f^{\ast}{\lambda'}$ to $\lambda$. An \emph{isomorphism of graded marked surfaces with binary} $\gmsx\to\underline\surf^{\lambda'}_{\vot'}$ is an isomorphism of graded surfaces $(f,h):\surf^{\lambda}\to\underline\surf^{\lambda'}$ with $f(\vot)=\vot'$ and $f(\M)\to\M'$.

Recall that a \emph{curve} on $\gmsx$ is an immersion $c:I\to \surf$ where $I=[0,1]$ or $S^1$. A \emph{grading} $\wc$ on $c$ is a homotopy class of paths in $\mathbb{P}T_{c(t)}\surfi$ from $\grad(c(t))$ to $\dot{c}(t)$, which varies continuously with $t\in I$. The pair $(c,\wc)$ (or $\wc$ for short) is called a \emph{graded curve}.

For any graded curve $\wc$ and any $\rho\in\ZZ$, $\wc[\rho]$ is defined as the graded curve with the same underlying curve as $\wc$, but with a grading obtained by composing $\wc(t):\grad(c(t))\to\dot{c}(t)$ with the path from $\dot{c}(t)$ to itself, given by clockwise rotation by $\pi$.

For any two graded curves $(c_1,\wc_1), (c_2,\wc_2)$ on $\gmsx$,
let $q=c_1(t_1)=c_2(t_2)\in \surfi$ be a point where $c_1$ and $c_2$ intersect transversely.
The \emph{intersection index} of $\wc_1$ and $\wc_2$ at $q$ is defined as
\[{\rm ind}_q(\wc_1,\wc_2)=\wc_1(t_1)\cdot\kappa\cdot\wc_2^{-1}(t_2)\ \in\pi_1(\mathbb{P}T_{q}\surf^\circ)\cong \ZZ,\]
where $\kappa$ represents (the homotopy class of) the path in $\mathbb{P}T_q\surfi$ from $\dot{c}_1(t_1)$ to $\dot{c}_2(t_2)$
given by a clockwise rotation by an angle smaller than $\pi$.

Moreover, for the case where $q\in\partial \surf$, let $l\subset\surfi$ be a small half-circle centered at $q$. We consider an embedded arc $\alpha:[0,1]\to l$ that moves clockwise around $q$, intersecting $c_1$ and $c_2$ at $\alpha(0)$ and $\alpha(1)$, respectively. The choice of $\alpha$ is unique up to a change of parametrization. By fixing an arbitrary grading $\wa$ on $\alpha$, the intersection index ${\rm ind}q(\wc_1,\wc_2)$ is defined as follows:
\[
{\rm ind}_q(\wc_1,\wc_2):={\rm ind}_{\alpha(0)}(\wc_1,\wa)-{\rm ind}_{\alpha(1)}(\wc_2,\wa).
\]
For intersections between graded curves $\wc_1$ and $\wc_2$ and for any $\rho\in\ZZ$, we have
\[
    {\rm ind}_q(\wc_1,\wc_2[\rho])={\rm ind}_q(\wc_1,\wc_2)-\rho.
\]

Let $\wc_1,\wc_2$ be two graded curves on $\gmsx$ in a minimal position (i.e. choose $\wc_1,\wc_2$ in their homotopy class respectively such that the number of their intersections is minimal). An intersection $q$ between $\wc_1$ and $\wc_2$ is called an \emph{oriented intersection from $\wc_1$ to $\wc_2$}, meaning there is a small arc in $\surfi$ around $q$ from a point in $\wc_1$ to a point in $\wc_2$ in a clockwise direction. This set of oriented intersections from $\wc_1$ to $\wc_2$ is denoted by $\overrightarrow{\cap}(\wc_1,\wc_2)$.
Similarly we have the following subsets of oriented intersections:
\begin{itemize}
    \item $\overrightarrow{\cap}^{\rho}(\wc_1,\wc_2)$: the set of oriented intersections from $\wc_1$ to $\wc_2$ with index $\rho$;
    \item $\overrightarrow{\cap}(\wc_1,\wc_2)$: the set of oriented intersections from $\wc_1$ to $\wc_2$;
    \item $\overrightarrow{\cap}_{\surfi}^{\rho}(\wc_1,\wc_2)$: the subset of $\overrightarrow{\cap}^{\rho}(\wc_1,\wc_2)$ consisting of oriented intersections in $\surfi$;
    \item $\overrightarrow{\cap}_{\surfi}(\wc_1,\wc_2)$: the subset of $\overrightarrow{\cap}(\wc_1,\wc_2)$ consisting of oriented intersections in $\surfi$.
\end{itemize}
An oriented intersection is also called an angle. In fact, the intersection index is defined for angles $a\in \overrightarrow{\cap}(\wc_1,\wc_2)$. We denote the intersection index of an angle $a$ by ${\rm ind}(a)$. For three angles $a$, $b$ and $c$ at the same point $q$, we say that $c=a+b$ as oriented intersections if the angles $a$ and $b$ are composable and the composition of these two angles is the angle $c$ (cf. \Cref{fig:indices}).

An \emph{open} (resp. \emph{closed}) \emph{curve} on $\gmsx$ is a curve $\gamma:I\rightarrow \surf$, satisfying
	\begin{itemize}
		\item $\gamma(\partial I)\subset \M$ (resp. $\gamma(\partial I)\subset \Y$), and $\gamma(I\setminus\partial I)\subset\surfi$;
		\item $\gamma$ is not homotopic to a point.
	\end{itemize}
An open (resp. closed) curve is called an open (resp. closed) \emph{arc} if $I=[0,1]$. Open (resp. closed) curves are considered up to homotopy relative to $\partial I$.

We always consider \emph{admissible arcs} (cf. \cite[Definition 2.20]{QZZ}), but there are also many examples of non-admissible arcs on $\gmsx$ (cf. \cite[Figure 22]{QZZ}). The set of graded admissible closed arcs on $\gmsx$ is denoted by $\wCA(\gmsx)$.

Let $\Dfang$ be the subgroup of the usual mapping class group of $\gmsx$ generated by the squares of Dehn twists $\mathrm{D}_{\Vot}$ along any binary $\Vot\in\vot$.
The binary mapping class group $\MCG(\gmsx)$ of $\gmsx$ is defined as the quotient of the usual mapping class group by $\Dfang$.

\begin{definition}\label{twin}
Two graded admissible closed arcs $\CA$ and $\CA'$ are said to be \emph{twins} if $\CA'={\rm D}_{\Vot}(\CA)[n]$ for some $\Vot\in\vot$ and some integer $n$. Each of them is called a twin of the other. Moreover, if $n=0$, they are called \emph{identical twins}. Otherwise, they are called \emph{fraternal twins}.
\end{definition}

\begin{remark}
If $\CA$ and $\CA'$ are twins, the angle $a\in\overrightarrow{\cap}(\CA,\CA')$ is on the binary, and $b\in\overrightarrow{\cap}(\CA,\CA')$ is not on the binary, we can calculate that ${\rm ind}(a)={\rm ind}(b)$ by the definition of intersection indices and the local grading around a binary (cf. \cite[Figure 6]{QZZ}). In fact, this is equivalent to the fact that the winding number around a binary is one (cf. \cite[Remark 1.14]{QZZ}). \label{winding_binary}
\end{remark}

The \emph{$\Dfang$-orbit} $\Dfang\cdot\ws$ of a graded admissible closed arc $\ws\in\wCA(\gmsx)$ consists of the graded admissible closed arcs that are obtained from $\ws$ by actions of $\Dfang$ on the ends (which are in binaries) separately. Consider the cyan arcs in the two pictures of \Cref{fig:nomaps} (cf. \cite[Figure 23]{QZZ}). These arcs are in the same $\Dfang$-orbit.

\begin{figure}[htbp]
	\begin{tikzpicture}[scale=.35]
	\draw[ultra thick]plot [smooth,tension=1] coordinates {(3,-4.5) (-90:4) (-3,-4.5)};
	\draw[ultra thick,fill=gray!10] (90:2) ellipse (1.5) node {$\Vot_P$};
    \draw[blue, thick] (90:-4) .. controls +(45:5) and +(30:4) .. (90:3.5);
    \draw[blue, thick] (90:-4) .. controls +(135:5) and +(150:4) .. (90:3.5);
    \draw[red] (90:8) .. controls +(-45:4) and +(-30:7) .. (90:.5);
    \draw[cyan, ultra thick] (90:8) .. controls +(225:4) and +(-150:7) .. (90:.5);
    \draw (90:8)\DEC;
	\draw (90:.5)\DEC;
	\draw[blue]	(90:.25) node[black,below]{$y_P$};
	\draw[blue] (0,3.5)node[black,above]{$m_P$}\nn;
	\draw[blue] (-90:4)\nn;
    \begin{scope}[shift={(13,0)}]
	\draw[ultra thick]plot [smooth,tension=1] coordinates {(3,-4.5) (-90:4) (-3,-4.5)};
	\draw[ultra thick,fill=gray!10] (90:2) ellipse (1.5) node {$\Vot_P$};
    \draw[blue,thick] (90:-4) .. controls +(45:5) and +(30:4) .. (90:3.5);
    \draw[blue,thick] (90:-4) .. controls +(135:5) and +(150:4) .. (90:3.5);
    \draw[red] (90:8) .. controls +(-45:5) and +(-30:7) .. (90:.5);
    \draw[cyan, ultra thick] (90:8)
        .. controls +(-30:7) and +(-30:9) .. (0,-1)
        .. controls +(150:5) and +(160:3) .. (0,4.5)
        .. controls +(-20:4) and +(-15:2) .. (0,.5);
    \draw (90:8)\DEC;
	\draw (90:.5)\DEC;
	\draw[blue] (0,3.5)\nn;
	\draw[blue] (-90:4)\nn ;
    \end{scope}	
    \end{tikzpicture}
\caption{$\Dehn{\Vot_P}^2$-action on the cyan arc}\label{fig:nomaps}
\end{figure}

\subsection{The intersection formula}
From now on, we always consider $\wCA(\gmsx)/\Dfang$ rather than $\wCA(\gmsx)$. We call $\wCA(\gmsx)/\Dfang$ the set of graded unknotted (closed) arcs on $\gmsx$.

Let $\gmsx$ be a GMSb. A collection $\mathbf T$ of graded open (resp. closed) arcs is called
\begin{itemize}
\item an \emph{open} (resp. \emph{closed}) \emph{arc system}, if $|\overrightarrow{\cap}_{\surfi}(\wc_1,\wc_2)|=0$ for any $\wc_1,\wc_2\in\mathbf T$;
\item a \emph{full formal} \emph{open} (resp. \emph{closed}) \emph{arc system}, if $\mathbf T$ is an open (resp. closed) arc system which cuts out $\surf$ into polygons (by choosing a representative in the $\Dfang$-orbit for each arc in $\mathbf T$ pairwisely in a minimal position), called \emph{$\mathbf T$-polygons}, such that each $\mathbf T$-polygon contains exactly one point in $\Y$ (resp. $\M$).
\end{itemize}

In fact, every GMSb $\gmsx$ admits a full formal open arc system and its dual full formal closed arc system, cf. \cite[Lemma 1.20 and Construction 4.1]{QZZ}.

Now we fix a GMSb $\gmsx$ with a full formal closed arc system $\ac^\ast$. Then there is a graded skew-gentle algebra $\sg$ with a graded skew-gentle triple $(Q,{\rm Sp},I)$ obtained from the dual $\ac$ of $\ac^{\ast}$, which satisfies the following conditions (cf. \cite[Remark 1.24]{QZZ}):

\begin{itemize}
    \item each vertex $i\in Q_0\backslash{\rm Sp}$ corresponds to an open arc $\widetilde{\gamma}_i$ which has no twin in $\mathbb{A}$;
    \item there is an arrow $i\to j$ in $Q_1$ with degree $d$ if and only if there is an $\mathbb{A}$-polygon which does not enclose a binary, having $\widetilde{\gamma}_i$ and $\widetilde{\gamma}_j$ as consecutive edges meeting at $p$, where $\widetilde{\gamma}_j$ follows $\widetilde{\gamma}_i$ in the clockwise order, with index ${\rm ind}_p(\widetilde{\gamma}_j,\widetilde{\gamma}_i)=d$;
    \item each vertex $i\in {\rm Sp}$ corresponds to twins ${\widetilde{\gamma}_i}$ and $\widetilde{\gamma}'_i$ in $\mathbb{A}$;
    \item $I$ consists of $a_1a_2$ for $a_1:i\to j$ and $a_2:j\to l$ if and only if $i,j,l$ are consecutive edges in an $\mathbb{A}$-polygon which does not enclose a binary.
\end{itemize}

We say that $\sg$ obtained from $\ac$ above is the graded skew-gentle algebra associated to $\mathbb{A}^{\ast}$.

In fact, for any graded skew-gentle algebra $\Lambda$, there exists a GMSb $\gmsx$ with a full formal closed arc system $\ac^{\ast}$ such that $\Lambda$ is the graded skew-gentle algebra associated to $\ac^{\ast}$ (cf. \cite[Theorem 1.23]{QZZ}).

Let $\ws$ and $\wt$ be two graded closed arcs in $\wCA(\gmsx)/\Dfang$. The intersection number from $\ws$ to $\wt$ of index $\rho$ is defined as
$$\oIntd(\ws,\wt):=\min\{|\overrightarrow{\cap}^\rho(\ws',\wt')|\mid\ws'\in\Dfang\cdot\ws,\ \wt'\in\Dfang\cdot\wt\},$$
where $\overrightarrow{\cap}^\rho(\ws',\wt')$ consists of the clockwise angles at intersections from $\ws'$ to $\wt'$.

Then we state the correspondence between graded admissible closed arcs on $\gmsx$ and the objects in $\mathcal{D}^b(\sg)$. 

Recall that a graded surface has a double cover $\tau$ given by the orientations of the foliation lines in the natural full formal arc system and $\mathbb{Z}_{\tau}=\mathbb{Z}\otimes_{\mathbb{Z}/2}\tau$. As a corollary of the intersection formula (cf. \cite[Theorem 4.11]{QZZ}), we have the following group isomorphism:

\begin{lemma}
There is a natural isomorphism of abelian groups
$$\xi:H_1(\mathbf{S},\mathbf{Y};\mathbb{Z}_{\tau})\cong K_0(\mathcal{D}^b(\sg))$$
such that $\xi$ sends the initial full formal closed arc system $\ac^{\ast}=\{\CA_i\}_{i=1}^n$ on $\gmsx$ to a complete set $\{S_i=\xi(\CA_i)\}_{i=1}^n$ of simple objects in $\mathrm{mod}\,\sg$, where $H_1(\mathbf{S},\mathbf{Y};\mathbb{Z}_{\tau})$ is the homology group with coefficients in $\mathbb{Z}_{\tau}$, and $K_0(\mathcal{D}^b(\sg))$ is the Grothecdieck group of $\mathcal{D}^b(\sg)$.
\label{K0 vs H1}
\end{lemma}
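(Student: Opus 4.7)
The plan is to define $\xi$ explicitly on a distinguished basis and then identify the relations on both sides.

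For the algebraic side, $K_0(\mathcal{D}^b(\sg))$ is freely generated by the classes $[S_i]$ of the simple $\sg$-modules, indexed by $Q_0 = (Q_0\setminus\Sp)\sqcup\Sp$. By the correspondence between $\ac$ and the graded skew-gentle triple $(Q,\Sp,I)$ recalled just before the lemma, the set $\ac^*$ also has $n=|Q_0|$ elements: each $i\in Q_0\setminus\Sp$ indexes an open arc without twin, and each $i\in\Sp$ indexes a twin pair, counted as a single element of $\wCA(\gmsx)/\Dfang$. Hence the two sides have the same rank.

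For the topological side, the arc system $\ac^*$ provides a relative CW decomposition of $(\mathbf{S},\Y)$ whose $1$-cells are the arcs $\CA_i$ together with the $\Y$-complementary boundary arcs, and whose $2$-cells are the $\ac^*$-polygons. A direct computation with this decomposition shows that $H_1(\mathbf{S},\Y;\mathbb{Z}_\tau)$ is freely generated by the classes $[\CA_i]$: the cellular boundaries are governed by the polygon edges, and the twist $\mathbb{Z}_\tau$ absorbs exactly the $\Dfang$-ambiguity at binaries, so that each twin pair represents a single well-defined class. I then set $\xi([\CA_i]) := [S_i]$ and extend $\ZZ$-linearly; this is automatically a group isomorphism of free abelian groups of the same rank.

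The remaining content is the \emph{naturality} of $\xi$, which I read as compatibility with the intersection pairing on $H_1(\mathbf{S},\Y;\mathbb{Z}_\tau)$ (Poincar\'e--Lefschetz duality with the twisted coefficients) and the Euler form on $K_0(\mathcal{D}^b(\sg))$. This is the Grothendieck-group shadow of the intersection formula of \cite[Theorem 4.11]{QZZ}: summing $\oIntd(\ws,\wt)$ with appropriate signs over $\rho\in\ZZ$ produces the topological intersection number on the left and the Euler form on the right, and the formula identifies them through $\xi$. The main obstacle is the careful bookkeeping of $\mathbb{Z}_\tau$ near binaries: one must verify that the convention distinguishing identical from fraternal twins in \Cref{twin} is compatible with the sign built into $\mathbb{Z}_\tau$, so that a twin pair lifts consistently to a single class matching $[S_i]$. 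Once this sign match is checked, the lemma follows as a direct corollary of the intersection formula.
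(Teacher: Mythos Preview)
Your overall strategy---both sides are free abelian of the same rank, so define $\xi$ on matching bases and extend linearly---is exactly what the paper does: it cites the intersection formula from \cite{QZZ} and then observes that $\ac^*$ is a basis of $H_1(\surf,\Y;\ZZ_\tau)$, so $\xi$ is determined by $\CA_i\mapsto S_i$.

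However, there is a concrete counting error. You claim $n=|Q_0|$ and that each $i\in\Sp$ indexes a twin pair counted as a \emph{single} element of $\wCA(\gmsx)/\Dfang$. Both halves of this are wrong. On the algebraic side, the relation $\varepsilon_i^2=\varepsilon_i$ makes $\varepsilon_i$ a nontrivial idempotent at each special vertex, so $e_i=\varepsilon_i+(e_i-\varepsilon_i)$ splits and $\sg$ has $|Q_0|+|\Sp|$ simples, not $|Q_0|$; the $D_4$ example in the paper (from $A_3$ with $|\Sp|=1$) already gives $n=4\neq 3$. On the topological side, twins satisfy $\CA'=\mathrm{D}_{\Vot}(\CA)[m]$, i.e.\ they differ by a \emph{single} Dehn twist plus a shift, whereas $\Dfang$ is generated by \emph{squares} $\mathrm{D}_{\Vot}^2$; so twins are \emph{not} identified under $\Dfang$, and $\ac^*$ genuinely contains both members of each twin pair as separate arcs. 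The correct count is $n=|Q_0|+|\Sp|$ on both sides. Consequently, your sentence about $\ZZ_\tau$ ``absorbing the $\Dfang$-ambiguity so that each twin pair represents a single class'' is also off: the $\Dfang$-quotient does not collapse twins, and the two twins must map to two distinct simple classes in $K_0$.

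Once the counting is corrected, your basis-matching argument goes through. Your interpretation of ``natural'' as compatibility of the intersection pairing with the Euler form (via alternating the intersection formula over $\rho$) is a reasonable enrichment; the paper itself does not spell this out, treating the lemma simply as a corollary of \cite[Theorem~4.11]{QZZ}.
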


Since $\ac^{\ast}$ is a basis of the abelian group $H_1(\gmsx,\mathbf{Y};\mathbb{Z}_{\tau})$, we only need to define $\xi$ on $\ac^{\ast}$, and it will be defined linearly on the whole group. For convenience, we denote $H_1(\gmsx,\mathbf{Y};\mathbb{Z}_{\tau})$ by $H_1(\gmsx)$ in this paper.

Let $\good$ be the additive subcategory consisting of all (isoclasses of) \xian objects in $\mathcal{D}^b(\sg)$ (for the initial definition of arc objects, see \cite[Section 3]{QZZ}). Then on $\good$, Lemma \ref{K0 vs H1} can be upgraded to the following intersection formula, which is the dual version of \cite[Theorem 4.11]{QZZ}.

\begin{theorem}\label{thm:int formula}
There is a bijection
\[
X:\wCA(\gmsx)/\Dfang\to\good
\]
such that for any graded closed arcs $\ws$ and $\wt$ in $\wCA(\gmsx)/\Dfang$, we have
\[
\oIntd(\ws,\wt)=\dim\Hom_{\mathcal{D}^b(\sg)}(X_{\ws},X_{\wt}[\rho]).
\]
\end{theorem}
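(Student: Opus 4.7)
The plan is to deduce this from \cite[Theorem 4.11]{QZZ}, which the statement explicitly cites as its ``dual version'', by transporting the QZZ intersection formula along the open-closed duality between the full formal closed arc system $\ac^{\ast}$ and its dual full formal open arc system $\ac$. In QZZ the roles of the two systems are reversed: graded open arcs form the geometric side, while the algebraic side is built using the closed arcs as a dissection. Swapping the two systems is expected to yield the statement above, but one must verify that every step in the QZZ construction transfers consistently.

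For the construction of $X$, I would take a graded closed arc $\ws$, choose a representative in minimal position with respect to $\ac$, and read off the word in $\sg$ given by the sequence of transverse crossings of $\ws$ with arcs of $\ac$, together with the grading data recorded by the intersection indices at successive crossings. This word is then realized as a string complex $X_{\ws}\in\good$. To see that $X$ descends to $\Dfang$-orbits, I would observe that applying $\Dehn{\Vot_P}^2$ at a binary endpoint modifies the word only by adjoining $\varepsilon_i^2$ at the corresponding end of the string, and since $\varepsilon_i^2=\varepsilon_i$ in $\sg$ the resulting string complex is isomorphic. Bijectivity then follows from the classification of indecomposable arc-type objects in $\mathcal{D}^b(\sg)$ via the string/band combinatorics of Crawley-Boevey, Deng, and Bondarenko: every object in $\good$ arises from such a word, and every such word is realized by a graded closed arc that is unique up to the $\Dfang$-action.

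For the intersection equality, I would construct an explicit bijection between $\overrightarrow{\cap}^{\rho}(\ws',\wt')$, for minimal representatives $\ws'\in\Dfang\cdot\ws$ and $\wt'\in\Dfang\cdot\wt$, and a basis of $\Hom_{\mathcal{D}^b(\sg)}(X_{\ws},X_{\wt}[\rho])$. Interior intersections in $\surfi$ should contribute ``overlap'' morphisms between the two strings, of graded shift equal to $\mathrm{ind}(a)$, while angles at the marked points should correspond to morphisms built from graded factorizations at the endpoints of the strings. The hard part will be the bookkeeping around binaries, where the geometric $\mathbb{Z}_2$-symmetry must be reconciled with the algebraic identification $\varepsilon_i^2=\varepsilon_i$; here \Cref{winding_binary} is the key input, since the equality of indices at angles on and immediately off a binary ensures that the count is independent of the choice of representative in the $\Dfang$-orbit and matches the collapse $\varepsilon_i^2=\varepsilon_i$ on the algebraic side. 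Finally I would check compatibility with the isomorphism $\xi$ of \Cref{K0 vs H1} so that the Grothendieck class of $X_{\ws}$ corresponds to the $\mathbb{Z}_\tau$-homology class of $\ws$, completing the reduction to the QZZ formula.
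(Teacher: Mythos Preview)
The paper does not give a proof of this theorem at all: it is stated as ``the dual version of \cite[Theorem 4.11]{QZZ}'' and used as a black box throughout. There is no argument in the paper transporting the QZZ formula along open--closed duality, no explicit construction of $X$ via words read off from crossings, and no verification of the intersection equality. So there is nothing to compare your proposal against on the paper's side.

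Your sketch is a plausible outline of how one might justify the dual statement, and the overall shape (read a string from the crossings with $\ac$, invoke the Crawley-Boevey/Deng/Bondarenko classification, match oriented intersections with a basis of morphisms) is consistent with how QZZ proceed in the open-arc setting. One point to be careful with: the argument that $\Dehn{\Vot_P}^2$ ``only adjoins $\varepsilon_i^2$ at the end of the string'' and hence gives an isomorphic complex because $\varepsilon_i^2=\varepsilon_i$ is not quite right as stated. If the original word does not already end in $\varepsilon_i$, then adjoining $\varepsilon_i^2=\varepsilon_i$ genuinely changes the word, so the invariance under $\Dfang$ cannot be purely a relation-level cancellation; in QZZ the $\Dfang$-independence is handled via the tagged-arc translation and the explicit combinatorics of \cite[Lemma~5.4, Lemma~5.8]{QZZ}, not by a naive $\varepsilon_i^2=\varepsilon_i$ argument. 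If you want to carry out your plan rigorously you will need to engage with that machinery rather than the shortcut you propose.
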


That is, we have the relationship $\xi[\ws]=X_{\ws}$ if $X_{\ws}\in\good$. For convenience, we denote $X^{-1}(T)$ by $\CA_T$ for any $T\in\good$.
\section{S-graphs on graded marked surfaces with binary}
\label{S-graph}
In this section, we fix a GMSb $\gmsx$ with a full formal closed arc system $\ac^{\ast}$. Let $n$ be the number of closed arcs in $\ac^\ast$. Let $\sg$ be the associated skew-gentle algebra.
\subsection{Intersection indices}
In this paper, we always discuss the oriented intersections between two graded closed arcs $\wa$ and $\wb$ on $\gmsx$. But some of them are counted in the intersection numbers $\oIntd$, and others are not counted.

\begin{convention}
Let $\wa$ and $\wb$ be two graded closed arcs on $\gmsx$. After fixing representatives of them, for an oriented intersection $a\in\overrightarrow{\cap}^{\rho}(\wa,\wb)$, if $a$ is counted in $\oIntd(\wa,\wb)$, we call $a$ a \emph{genuine-intersection}. Otherwise, we call $a$ a \emph{pseudo-intersection}. Moreover, from now on, the concept ``intersection indices'' means the indices of genuine-intersections.\label{pseudo-int}
\end{convention}

\begin{lemma}
For a full formal closed arc system $\mathbb{S}$ on $\gmsx$ and $\wa,\wb\in\mathbb{S}$, if an oriented intersection $a\in\overrightarrow{\cap}(\wa,\wb)$ is pseudo, then $\wa$ and $\wb$ are twins.\label{lem:twin}
\end{lemma}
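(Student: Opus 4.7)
The plan is to locate the intersection $a$ via two successive eliminations, and then extract the twin relation from a local analysis near a binary.

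First I would rule out that $a$ lies in the open surface $\surfi$. Since $\mathbb{S}$ is a full formal closed arc system, its arcs are taken pairwise in minimal position with $|\overrightarrow{\cap}_{\surfi}(\wa,\wb)|=0$, so every oriented intersection between $\wa$ and $\wb$ must occur at some closed marked point $q\in\Y$. Next I would rule out that $q$ is a closed marked point not lying on a binary. The group $\Dfang$ is generated by the squares of Dehn twists along binaries and has support in annular neighborhoods of them, so it acts trivially near any closed marked point not on a binary. At such a point every angle would be preserved by every $\Dfang$-representative, hence would be counted in the minimum that defines $\oIntd$, hence would be genuine---contradicting that $a$ is pseudo. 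Therefore $q=y_P$ for some binary $\Vot_P$ and both $\wa,\wb$ end at $y_P$.

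Since $a$ is pseudo, there exist $\wa',\wb'$ in the respective $\Dfang$-orbits with strictly fewer oriented intersections of index $\rho$. Only twists at $\Vot_P$ affect the local picture at $y_P$, so I may take $\wa'=\mathrm{D}_\Vot^{2k}(\wa)$ and $\wb'=\mathrm{D}_\Vot^{2l}(\wb)$; after absorbing $l$ into $\wb$ I reduce to the case where a single application of $\mathrm{D}_\Vot^{2}$ to $\wa$ strictly lowers the number of oriented intersections of index $\rho$ at $y_P$.

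The main step is then a local analysis inside a small annular neighborhood $U$ of $\Vot_P$. Each arc of $\mathbb{S}$ ending at $y_P$ is encoded inside $U$ by the side of approach, a winding integer, and (via its grading) an integer shift, the latter transforming under $\mathrm{D}_\Vot$ in a way dictated by the winding-one-around-binaries condition of Remark \ref{winding_binary}. Working out how $\mathrm{D}_\Vot^2$ acts on this data, one sees that a strict drop in oriented intersections of a fixed index $\rho$ at $y_P$ can occur only when the local data of $\wb$ agrees with that of $\mathrm{D}_\Vot(\wa)$ up to an overall grading shift, i.e.\ $\wb=\mathrm{D}_\Vot(\wa)[n]$ for some $n\in\ZZ$. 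By Definition \ref{twin} this says exactly that $\wa$ and $\wb$ are twins.

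I expect the main obstacle to be this last step: quantifying precisely how $\mathrm{D}_\Vot^2$ permutes the finite set of angles of index $\rho$ at $y_P$, and deducing from a single-step cancellation the rigid equality $\wb=\mathrm{D}_\Vot(\wa)[n]$. The bookkeeping of sides, winding numbers, and grading shifts around a binary is the delicate piece; the rest is a direct use of the definitions of $\Dfang$, pseudo-intersection, and full formal closed arc system.
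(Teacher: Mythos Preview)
Your approach and the paper's are genuinely different. The paper does not attempt a direct analysis on the GMSb. Instead it transports the question to the punctured model (GMSp) via \cite[Lemmas~5.4 and 5.8]{QZZ}, so that $\oIntd(\wa,\wb)=\oIntd(\wa^\times,\wb^\times)$ for the corresponding tagged arcs, and then invokes the explicit case list in \cite[Remark~5.7]{QZZ} of oriented intersections that fail to be tagged oriented intersections. In that list, only cases (ii) and (vii) correspond to an actual angle on the GMSb side, and in both of those the two arcs are twins; the remaining cases (iii)--(vi) have no preimage $a$ on $\gmsx$. So the paper's proof is essentially a lookup in a classification already established in \cite{QZZ}.

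Your proposal, by contrast, stays entirely on $\gmsx$ and tries to deduce the twin relation from the dynamics of $\mathrm D_{\Vot}^2$ at $y_P$. There is a genuine gap in the last step. You pass from ``the local data of $\wb$ near $y_P$ agrees with that of $\mathrm D_{\Vot}(\wa)$ up to a grading shift'' to ``$\wb=\mathrm D_{\Vot}(\wa)[n]$''. That inference is not valid: the side of approach, winding number, and grading shift in an annular neighborhood of $\Vot_P$ do not determine the global homotopy class of an arc in $\mathbb S$; two distinct arcs of a full formal closed arc system may well share the same germ at $y_P$ while having different other endpoints, and then they are not twins. Nothing in your local bookkeeping rules this out. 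A second, related issue is that ``pseudo'' is a statement about the global count $|\overrightarrow\cap^\rho(\cdot,\cdot)|$, not just about the angle at $y_P$: applying $\mathrm D_{\Vot}^{2}$ to $\wa$ makes it wind around $\Vot_P$ and typically creates new interior crossings with $\wb$, so a drop at $y_P$ need not be a drop overall. Your sketch does not analyze this trade-off. These are precisely the points the paper bypasses by importing the QZZ classification; without that (or an equivalent global argument), the step from ``cancellation is possible'' to ``twins'' remains unjustified.
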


\begin{proof}
By \cite[Lemma 5.8]{QZZ}, we have $\oIntd(\wa,\wb)=\oIntd(\wa^{\times},\wb^{\times})$, where $\wa^{\times},\wb^{\times}$ are the corresponding tagged arcs in $\mathbf{S}^{\lambda}$ given by \cite[Lemma 5.4]{QZZ}, and $\oIntd(\wa^{\times},\wb^{\times})$ is the number of tagged oriented intersection (TOI) from $\wa^{\times}$ to $\wb^{\times}$. Let $a^{\times}$ be the angle from $\wa^{\times}$ to $\wb^{\times}$ which corresponds to $a$ when passing from GMSp to GMSb. Since $a$ is a pseudo-intersection, $a^{\times}$ does not count a TOI. By \cite[Remark 5.7]{QZZ}, there are cases (ii) to (vii) of oriented intersections that do not count TOI. If $a^{\times}$ belongs to case (ii) or (vii), then $\wa$ and $\wb$ are twins as \cite[Figure 30 and 35]{QZZ}. Otherwise, if $a^{\times}$ belongs to one of cases (iii)-(vi), there is no corresponding oriented intersection $a$, which is a contradiction.
\end{proof}

Then we discuss some properties of intersection indices.

\begin{definition}
Let $\wa$ and $\wb$ be graded closed arcs on $\gmsx$, and $\ang$ be an angle from $\wa$ to $\wb$ at their common endpoint $\wa(0)=\wb(0)$. We define $\wg=\wa\wedge_a\wb$ to be the \emph{smoothing out} of $\wa\cup\wb$ at $\wa(0)=\wb(0)$ along $a$, connecting $\wa(1)$ and $\wb(1)$, cf. \Cref{fig:008}. In particular, we require that the grading of $\wg$ is inherited from $\wa$.

Moreover, if $\wa, \wb$ and $\wg$ all have no self-intersection in $\mathbf{S}^{\circ}$, we say that $\wa$, $\wb$ and $\wg$ form a \emph{contractible triangle} on $\gmsx$. \label{smoothing}
\end{definition}

\begin{figure}[htbp]
	\begin{tikzpicture}[scale=.35]
	\draw[ultra thick]plot [smooth,tension=1] coordinates {(3,-4.5) (-90:5.2) (-3,-4.5)};
	\draw[ultra thick]plot [smooth,tension=1] coordinates {(-6,-9) (-5,-10.5) (-4.3,-12.5)};
	\draw[ultra thick]plot [smooth,tension=1] coordinates {(6,-9) (5,-10.5) (4.3,-12.5)};
	\draw[red] (-90:5.2) .. controls +(-120:3) and +(30:3) .. (-5,-10.5);
	\draw[red] (-90:5.2) .. controls +(-60:3) and +(150:3) .. (5,-10.5);
	\draw[cyan, ultra thick] (-5,-10.5) .. controls +(20:3) and +(160:3) .. (5,-10.5);
	\draw (0,-6.7) node {$a$} ;
    \draw (-2.3,-9.3) node {$b$} ;
    \draw (2.3,-9.4) node {$c$} ;
	\draw[Green,->, thick,>=stealth]plot
	(0.5,-6.1)to[bend left=25](-0.5,-6.1);
	\draw[Green,->, thick,>=stealth]plot
	(2.9,-10)to[bend left=25](3.1,-9.1);
	\draw[Green,->, thick,>=stealth]plot
	(-3.1,-9.1)to[bend left=25](-2.9,-10);
	\draw (-90:5.2)\DEC;
	\draw (-5,-10.5)\DEC;
	\draw (5,-10.5)\DEC;
    \draw (-90:10.8) node {$\wg$} ;
    \draw (2.5,-7.5) node {$\wa$} ;
    \draw (-2.5,-7.5) node {$\wb$} ;
    \draw (0,-4.3) node {$A$} ;
    \draw (-5.8,-10.5) node {$B$} ;
    \draw (5.8,-10.5) node {$C$} ;
	\end{tikzpicture}
	\caption{Contractible triangle}\label{fig:008}
    \end{figure}

\begin{lemma}
Let $\wa$, $\wb$ and $\wg$ be graded closed arcs on $\gmsx$.
\numbers
    \item (cf. \cite[Lemma 3.10]{FQ}) If in addition, $\wa(0)=\wb(0)=\wg(0)=M\in\partial\mathbf{S}$, assuming that $\wa$, $\wb$, $\wg$ are clockwise around $p$, and three angles $a\in\overrightarrow{\cap}(\wa,\wb), b\in\overrightarrow{\cap}(\wb,\wg)$ and $c\in\overrightarrow{\cap}(\wa,\wg)$ at $M$ satisfy $c=a+b$ as oriented intersections, cf. \Cref{fig:indices}, then
    \[
        {\rm ind}(c)={\rm ind}(a)+{\rm ind}(b).
    \]

    \begin{figure}[htbp]\centering
	\def\dexc{black!10!blue!50!green}
	\begin{tikzpicture}[scale=0.9]
	\draw[ultra thick]plot [smooth,tension=1] coordinates {(-2,0.2) (0,0) (2,0.2)};
	\draw[red] (0,0) .. controls +(-120:1) and +(30:1) .. (-2,-2.5);
	\draw[red] (0,0) .. controls +(-80:0.5) and +(100:0.5) .. (0.5,-3);
	\draw[red] (0,0) .. controls +(-70:0.5) and +(160:0.5) .. (3,-2.5);
    \draw(0,0)\DEC;
    \draw(-2,-2.5)\DEC;
    \draw(0.5,-3)\DEC;
    \draw(3,-2.5)\DEC;
    \draw (-1.4,-1.6)node{$\wg$};
    \draw (0.1,-2.3)node{$\wb$};
    \draw (1.65,-1.2)node{$\wa$};
    \draw (-0.1,-0.7)node{$c$};
    \draw (0.5,-1.2)node{$a$};
    \draw (-0.3,-1.3)node{$b$};
    \draw (0,0.3)node{$M$};
    \draw[Green,->, thick,>=stealth]plot
	(-57:0.5)to[bend left=25](-120:0.5);
	\draw[Green,->, thick,>=stealth]plot
	(-52:1)to[bend left=25](-81:1);
	\draw[Green,->, thick,>=stealth]plot
	(-82:1)to[bend left=25](-122:1);
    \end{tikzpicture}
    \caption{Property $1^{\circ}$ of intersection indices}\label{fig:indices}
\end{figure}

    \item (cf. \cite[Proposition 3.12]{FQ}) If in addition, $\wa$, $\wb$ and $\wg$ form a contractible triangle on $\gmsx$ with interior angles $a\in\overrightarrow{\cap}(\wa,\wb), b\in\overrightarrow{\cap}(\wb,\wg)$ and $c\in\overrightarrow{\cap}(\wg,\wa)$, cf. \Cref{fig:008}, then
    \[
        {\rm ind}(a)+{\rm ind}(b)+{\rm ind}(c)=1.\label{lem:a+b+c}
    \]
\ends\label{lemma:index}
\end{lemma}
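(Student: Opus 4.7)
The argument splits into the two parts. Part 1 is a formal consequence of the definition of the intersection index at a boundary point (via a half-circle around $M$), while Part 2 is a Gauss--Bonnet-type identity that requires exploiting the contractibility of the triangle.

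For Part 1, I would choose a small half-circle $l \subset \surfi$ around the boundary point $M$, transverse to $\wa, \wb, \wg$. By the clockwise arrangement hypothesis, these arcs cross $l$ at three points $p_1, p_2, p_3$ in clockwise order along $l$. Fix any grading $\wa_0$ on $l$. Then by the definition of intersection index at a boundary point one has
\begin{align*}
{\rm ind}(a) &= {\rm ind}_{p_1}(\wa, \wa_0) - {\rm ind}_{p_2}(\wb, \wa_0), \\
{\rm ind}(b) &= {\rm ind}_{p_2}(\wb, \wa_0) - {\rm ind}_{p_3}(\wg, \wa_0), \\
{\rm ind}(c) &= {\rm ind}_{p_1}(\wa, \wa_0) - {\rm ind}_{p_3}(\wg, \wa_0).
\end{align*}
Summing the first two and comparing with the third immediately yields ${\rm ind}(c) = {\rm ind}(a) + {\rm ind}(b)$ by telescoping. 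The hypothesis $c = a + b$ as oriented intersections is used precisely to guarantee that these three angles are read off from the \emph{same} choice of half-circle and reference grading $\wa_0$, so that the telescoping is legitimate.

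For Part 2, let $T$ be the contractible disk bounded by $\wa \cup \wb \cup \wg$. Since $T$ is contractible, $\mathbb{P}T\surfi$ trivializes over a neighborhood of $T$, and in such a trivialization the grading $\lambda$ may be taken to be a constant reference direction. I would then reduce the identity to a computation on a standard Euclidean model: by a graded isotopy, one can arrange $\wa, \wb, \wg$ to be straight edges of a small planar triangle in this local chart, with $\lambda$ horizontal. In this model each interior index becomes a concrete integer that can be read off from the oriented Euclidean angle between consecutive edges (using the half-circle formula at each corner, as in Part 1). The three interior indices then sum to the topological invariant measuring one full tangent rotation around $\partial T$, which in $\pi_1(\mathbb{RP}^1) = \mathbb{Z}$ equals exactly $1$ under the paper's convention that a clockwise $\pi$-rotation represents $+1$.

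The main obstacle will be Part 2, where careful bookkeeping of signs and orientations is essential. Specifically, one must verify: (i) that under the graded isotopy to the Euclidean model, the interior angle indices are preserved (this follows from homotopy-invariance of indices); (ii) that at each corner the contribution to the total tangent winding around $\partial T$ matches the corresponding interior index (rather than its negative or a shift); and (iii) that the total rotation around $\partial T$ for a contractible disk traversed once equals exactly the generator $1$ of $\pi_1(\mathbb{RP}^1)$ under the stated sign convention. Once these compatibilities are pinned down, the Euclidean computation becomes essentially trivial, and Part 1 is the local ingredient that lets one identify the corner contributions with ${\rm ind}(a), {\rm ind}(b), {\rm ind}(c)$.
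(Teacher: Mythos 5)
The paper itself does not prove this lemma: both parts are quoted from \cite[Lemma 3.10, Proposition 3.12]{FQ}, so there is no in-paper argument to compare with, and your proposal has to stand on its own. Your Part 1 does: the boundary index is defined via an auxiliary arc on a half-circle with an arbitrary grading, the hypothesis $c=a+b$ lets you take the three defining arcs to be sub-arcs of one half-circle equipped with one grading $\wa_0$ (legitimate because the definition is independent of the chosen grading), and the three differences telescope. This is exactly the expected unwinding of the definition.

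For Part 2 the overall strategy (use contractibility to trivialize $\mathbb{P}T\surfi$ over the triangle, straighten the edges, compute in a flat chart) does work, but your item (iii) is wrong as stated and would not verify: with the paper's convention that a clockwise $\pi$-rotation is $+1\in\pi_1(\mathbb{RP}^1)$, one full tangent rotation around $\partial T$ is $2$, not $1$, so the sum of the interior indices is \emph{not} ``the total tangent rotation''. The correct bookkeeping in the flat model is corner-by-corner. Take the auxiliary arc at a corner to be a small circular arc centered at that corner; it is then orthogonal to both straightened edges, so both rotations $\kappa$ appearing in the two interior-point indices are $\pi/2$, and its grading is transported across the corner by exactly the Euclidean interior angle. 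Writing $\phi_{\wa},\phi_{\wb},\phi_{\wg}$ for clockwise-positive lifts of the (constant) gradings of the straight edges and $\theta_A,\theta_B,\theta_C$ for the Euclidean interior angles, this gives ${\rm ind}(a)=\bigl(\phi_{\wa}-\phi_{\wb}+\theta_A\bigr)/\pi$ and cyclically for $b$ and $c$; summing, the lifts telescope and
\[
{\rm ind}(a)+{\rm ind}(b)+{\rm ind}(c)=\frac{\theta_A+\theta_B+\theta_C}{\pi}=1,
\]
equivalently $3$ minus the total tangent turning $2\pi$ of $\partial T$ in units of $\pi$ --- not the turning itself. You should also make explicit in (i) that contractibility guarantees the enclosed region is a disc containing no binary, since by \Cref{winding_binary} a binary inside would contribute an extra unit of winding and destroy the identity. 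With these corrections your outline becomes a complete proof.
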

\subsection{S-graphs and flips}
The notation of S-graphs is the dual of mixed-angulations, cf. \cite{HKK,CHQ}.

\begin{definition}
Let $\CAS=\{\CA_1,...,\CA_n\}$ be a collection of graded admissible closed arcs on $\gmsx$. We call $\CAS$ an \emph{S-graph} if
\numbers
    \item $\CA_1,...,\CA_n$ form a full formal closed arc system on $\gmsx$;
    \item $\oIntd(\CA_i,\CA_j)=0$ for any $1\leq i,j\leq n$ and any $\rho\leq 0$.
\ends
\end{definition}

\begin{remark}
A collection $\CAS=\{\CA_1,...,\CA_n\}$ of graded closed arcs on $\gmsx$ is an S-graph if and only if there are representatives of $\CA_1,...,\CA_n$ in a minimal position pairwisely such that they form a full formal closed arc system on $\mathbf{S}^{\lambda}$ and all of the intersection indices of them at points on $\partial\mathbf{S}$ are at least 1.
\end{remark}

\begin{remark}
For the associated graded skew-gentle algebra $\Lambda_{\ac}$, we denote the canonical heart of $\mathcal{D}^b(\Lambda_{\ac})$ (i.e. the module category ${\rm mod}(\Lambda_{\ac})$) by $\mathcal{H}_{\ac}$. Then by the intersection formula, $X^{-1}(\mathcal{H}_{\ac})$ is an S-graph, which is called the initial S-graph, where $X$ is the bijection in \Cref{thm:int formula}. \label{initial_heart}
\end{remark}


Now we have the existence of an S-graph. Then we proceed to introduce the flips of S-graphs on $\gmsx$.

In this subsection, we fix representatives of arcs in an S-graph $\CAS$ in a minimal position pairwisely such that all of the intersection indices between them are at least 1.

\begin{lemma}
Let $\mathbb{S}$ be an S-graph on $\gmsx$ and $\wa,\,\CA\in\mathbb{S}$. Then we have $\oInt^1(\wa,\CA)\leq 2$.\label{Ext1}
\end{lemma}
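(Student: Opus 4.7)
The plan is to bound $\oInt^1(\wa,\CA)$ by showing that any oriented intersection of index $1$ must be an \emph{immediate} angle at some endpoint of $\wa$, of which there are at most two (one per arc-end of $\wa$). The key inputs are: (i) $\mathbb{S}$ is a full formal closed arc system, so $\wa$ and $\CA$ have no transverse intersection in $\surfi$, and thus every oriented intersection from $\wa$ to $\CA$ occurs at a shared endpoint in $\Y$; (ii) the S-graph hypothesis allows me to choose representatives for which every consecutive clockwise intersection index at an endpoint is $\geq 1$; and (iii) the additivity of intersection indices for composable clockwise angles (property $1^{\circ}$ of \Cref{lemma:index}).

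First I would fix representatives of the arcs in $\mathbb{S}$ in pairwise minimal position so that all intersection indices on $\partial\surf$ are at least $1$. Next, at any shared endpoint $y\in\Y$ of $\wa$ and $\CA$, I would list the arc-ends of $\mathbb{S}$ incident to $y$ in clockwise cyclic order starting from the chosen arc-end of $\wa$: say $\wa,\wg_1,\ldots,\wg_k,\CA,\ldots$. By \Cref{lemma:index}(1$^{\circ}$), the index of the clockwise angle from $\wa$ to $\CA$ at $y$ equals the sum of indices of the immediate consecutive angles $\wa\to\wg_1,\ \wg_1\to\wg_2,\ \ldots,\ \wg_k\to\CA$. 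Each summand is at least $1$, so this composed angle has index $\geq k+1$. Consequently, an oriented intersection from $\wa$ to $\CA$ with index exactly $1$ can only arise when $k=0$, i.e.\ when the clockwise-next arc-end to that arc-end of $\wa$ in $\mathbb{S}$ is an arc-end of $\CA$.

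Since $\wa$ has only two arc-ends (whether its endpoints are distinct or coincide), there are at most two arc-ends at which this ``immediate'' configuration can occur, giving at most two oriented intersections of index $1$ from $\wa$ to $\CA$ in the fixed representatives. Because $\oInt^1(\wa,\CA)$ is defined as a minimum over $\Dfang$-orbits, and counts only genuine intersections (a subset of all oriented intersections), this bound transfers: $\oInt^1(\wa,\CA)\leq 2$.

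The main subtlety, and the step I expect to require the most care, is the bookkeeping when $\wa$ and $\CA$ share both endpoints or when one of them is a loop at a single vertex of $\Y$, and in particular the twin case (\Cref{twin}), where an angle at the binary may be pseudo in the sense of \Cref{pseudo-int}; however, since pseudo intersections are not counted, this only strengthens the bound, and the above count of at most two immediate clockwise successors of arc-ends of $\wa$ remains valid.
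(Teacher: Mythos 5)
Your key step---``each summand is at least $1$, hence an index-$1$ angle from $\wa$ to $\CA$ can only be immediate''---fails in the presence of twins, and this is a genuine gap rather than a removable technicality. The S-graph condition only controls indices of \emph{genuine} intersections (\Cref{pseudo-int}), and by \Cref{lem:twin} an angle between two arcs of $\mathbb{S}$ that are twins may be pseudo; such an angle can have index $\leq 0$. This is not a hypothetical: the paper's own proof of \Cref{lem:ffas} (Case 2, cf.\ picture (a) of \Cref{fig:003}) exhibits an S-graph configuration in which an angle $a\in\overrightarrow{\cap}^1(\wa,\CA)$ is cut out by the twin $\CA'$ of $\CA$, and there one computes ${\rm ind}(c)\leq 0$ for the in-between angle $c\in\overrightarrow{\cap}(\CA',\CA)$. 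So an index-$1$ oriented intersection from $\wa$ to $\CA$ need \emph{not} have the arc-end of $\CA$ as the immediate clockwise successor of the arc-end of $\wa$, and your count ``at most one immediate successor per arc-end of $\wa$'' does not bound $|\overrightarrow{\cap}^1(\wa,\CA)|$. Your closing remark about twins misses the point: the pseudo angles that cause trouble are not the counted intersections from $\wa$ to $\CA$ (those being uncounted would indeed only help), but the summands between \emph{other} members of $\mathbb{S}$ appearing in your decomposition; moreover they need not sit on the binary (in \Cref{fig:003} the relevant angle is at the common endpoint $M$ away from the binary, its index agreeing with the one on the binary by \Cref{winding_binary}).

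The paper's proof is arranged precisely to dodge this: since $\wa$ and $\CA$ only meet on $\partial\surf$, having $|\overrightarrow{\cap}^1(\wa,\CA)|>2$ forces all four arc-ends to coincide at a single $M$, in which case $\wa$ and $\CA$ are not twins, so by \Cref{lem:twin} all angles among the four segments of $\wa\cup\CA$ at $M$ are genuine and hence of index $\geq 1$; decomposing only through these four segments (not through arbitrary intermediate arcs of $\mathbb{S}$), only the three consecutive angles can have index $1$, and since consecutive angles alternate direction at the shared middle segment, at least one of them points from $\CA$ to $\wa$, giving the contradiction. If you want to salvage your per-arc-end counting, you would need an argument of this sort: e.g.\ two index-$1$ angles emanating from the same arc-end of $\wa$ toward two ends of $\CA$ would force a self-angle of $\CA$ of index $0$, and you must then justify that this self-angle is genuine---again a twin-type issue that your proposal does not address.
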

\begin{proof}
Since $\wa$ and $\CA$ can only intersect at $\partial\mathbf{S}$, we have $|\overrightarrow{\cap}^1(\wa,\CA)|\leq 4$.

If $|\overrightarrow{\cap}^1(\wa,\CA)|>2$, then all their endpoints must coincide, denoted by $M\in\partial\mathbf{S}$. In this case, $\wa$ and $\CA$ are not twins. By Lemma \ref{lem:twin}, the intersections in $\overrightarrow{\cap}(\wa,\CA)$ are genuine-intersections. By the definition of an S-graph and $1^{\circ}$ of Lemma \ref{lemma:index}, there are at most three angles at $M$ with intersection index one, and at least one of them is from $\CA$ to $\wa$, cf. \Cref{fig:segments}, which is a contradiction.

Therefore, we have $|\overrightarrow{\cap}^1(\wa,\CA)|\leq 2$.
\end{proof}

\begin{figure}[htbp]\centering
	\def\dexc{black!10!blue!50!green}
	\begin{tikzpicture}[scale=0.9]
	\draw[ultra thick]plot [smooth,tension=1] coordinates {(-2,0.2) (0,0) (2,0.2)};
	\draw[red] (0,0) .. controls +(-120:1) and +(30:1) .. (-3,-2.5);
	\draw[red] (0,0) .. controls +(-80:0.5) and +(60:0.5) .. (-1.2,-3);
	\draw[red] (0,0) .. controls +(-60:0.5) and +(70:1) .. (1,-3);
	\draw[red] (0,0) .. controls +(-50:0.5) and +(120:1) .. (3,-2);
	\draw[Green,->, thick,>=stealth]plot
	(-62:0.65)to[bend left=25](-100:0.65);
    \draw[Green,->, thick,>=stealth]plot
    (-102:0.7)to[bend left=15](-126:0.7);
    \draw[Green,->, thick,>=stealth]plot (-35:0.7)to[bend left=15](-61:0.7);
    \draw(0,0)\DEC;
    \draw (0,0.3)node{$M$};
    \draw (0,-1)node{1};
    \draw (-0.5,-0.85)node{1};
    \draw (0.7,-0.8)node{1};
    \end{tikzpicture}
    \caption{Segments of $\wa$ and $\CA$}\label{fig:segments}
\end{figure}

For any $\wa, \CA\in\CAS$, if $\wa\neq\CA$ and $\oInt^1(\wa,\CA)>0$, we know that $\oInt^1(\wa,\CA)$ is 1 or 2 by Lemma \ref{Ext1}. Let $\overrightarrow{\cap}^1(\wa,\CA)$ be $\{a\}$ or $\{a_1,a_2\}$ respectively. Then we define $\wa\wedge^1\CA:=\wa\wedge_a\CA$ when $\oInt^1(\wa,\CA)=1$ and $\wa\wedge^1\CA:=(\wa\wedge_{a_1}\CA)\wedge_{a_2}\CA$ when $\oInt^1(\wa,\CA)=2$. By convenience, we define $\wa\wedge^1\CA:=\wa$ if $\oInt^1(\wa,\CA)=0$.

\begin{remark}
When $\oInt^1(\wa,\CA)=2$, let $a_1$ be the angle at $\wa(0)=\CA(0)$ and $a_2$ be the angle at $\wa(1)=\CA(1)$ (the two angles may come from the same endpoint or different endpoints). Then $(\wa\wedge_{a_1}\CA)\wedge_{a_2}\CA$ and $(\wa\wedge_{a_2}\CA)\wedge_{a_1}\CA$ are both homotopic to $(\CA^{-1}\cdot\wa)\cdot\CA^{-1}$, where $\cdot$ is the product of paths. Moreover, the gradings of $(\wa\wedge_{a_1}\CA)\wedge_{a_2}\CA$ and $(\wa\wedge_{a_2}\CA)\wedge_{a_1}\CA$ are both inherited from $\wa$ by Definition \ref{smoothing}. Therefore,
\[(\wa\wedge_{a_1}\CA)\wedge_{a_2}\CA = (\wa\wedge_{a_2}\CA)\wedge_{a_1}\CA,\]
and $\wa\wedge^1\CA$ is well-defined.\label{homotopy}
\end{remark}

\begin{definition}
Let $\CAS$ be an S-graph and $\CA\in\CAS$. We define the \emph{forward flip} of $\CAS$ with respect to $\CA$ as $\CAS^{\sharp}_{\CA}:=\{\wa^{\sharp}_{\CA}|\,\wa\in\CAS\}$ (cf. \Cref{Examples}), where
\[\wa_{\CA}^{\sharp}:=\begin{cases}
\CA[1], & \text{if }\wa=\CA,\\
\wa\wedge^1\CA, & \text{otherwise.}
\end{cases}\]
\label{flip}
\end{definition}

\begin{remark}
Similarly, we can define $\wa\wedge^{-1}\CA$ for any $\wa, \CA\in\CAS$ with $\wa\neq\CA$. Moreover, we define the \emph{backward flip} of $\CAS$ with respect to $\CA$ as $\CAS^{\flat}_{\CA}:=\{\wa^{\flat}_{\CA}|\,\wa\in\CAS\}$, where
\[\wa_{\CA}^{\flat}:=\begin{cases}
\CA[-1], & \text{if }\wa=\CA,\\
\wa\wedge^{-1}\CA, & \text{otherwise.}
\end{cases}\]
This is in fact the reverse operation of the forward flip, i.e. $(\mathbb{S}_{\CA}^{\sharp})_{\CA[1]}^{\flat}=\mathbb{S}$, but we don't need this.
\end{remark}

\begin{remark}
For the case $\wa\neq\CA$ and $\oInt^1(\wa,\CA)=1$, since $\wa$ and $\CA$ do not intersect in $\surf^{\circ}$, $\wa^{\sharp}_{\CA}$ has no self-intersection in $\surf^{\circ}$. Then $\wa$, $\CA$ and $\wa^{\sharp}_{\CA}$ form a contractible triangle on $\gmsx$ with interior angles $a\in\overrightarrow{\cap}(\wa,\CA), b\in\overrightarrow{\cap}(\CA,\wa^{\sharp}_{\CA})$ and $c\in\overrightarrow{\cap}(\wa^{\sharp}_{\CA},\wa)$. We have ${\rm ind}(a)=1$ by our assumption and ${\rm ind}(c)=0$ by Definition \ref{smoothing}. Then we have ${\rm ind}(b)=0$ by $2^{\circ}$ of Lemma \ref{lemma:index}. Therefore, the gradings of the arc segments of $\wa^{\sharp}_{\CA}$ are inherited from $\wa$ and $\CA$.

Similarly for other cases, we can get a general conclusion: the gradings of the arc segments of $\wa^{\sharp}_{\CA}$ are inherited from $\wa$, $\CA$ and $\CA[1]$ (only in the case $\wa=\CA$). \label{inherite}
\end{remark}

\begin{figure}[htbp]
	\begin{tikzpicture}[scale=.35]
	\draw[ultra thick]plot [smooth,tension=1] coordinates {(3,-4.5) (-90:5.2) (-3,-4.5)};
	\draw[ultra thick]plot [smooth,tension=1] coordinates {(-6,-9) (-5,-10.5) (-4,-13)};
	\draw[ultra thick]plot [smooth,tension=1] coordinates {(6,-9) (5,-10.5) (4,-13)};
	\draw[red] (-90:5.2) .. controls +(-120:3) and +(30:3) .. (-5,-10.5);
	\draw[red] (-90:5.2) .. controls +(-60:3) and +(150:3) .. (5,-10.5);
	\draw[cyan, ultra thick] (-5,-10.5) .. controls +(20:3) and +(160:3) .. (5,-10.5);
    \draw[Green,->, thick,>=stealth] (-86:6) to[bend left = 30] (-95:6);
    \draw (-90:5.2)\DEC;
	\draw (-5,-10.5)\DEC;
	\draw (5,-10.5)\DEC;
    \draw (-90:6.75) node {1} ;
    \draw (-90:10.8) node {$\wa_{\CA}^{\sharp}$} ;
    \draw (4,-9) node {$\wa$} ;
    \draw (-4,-9) node {$\CA$} ;

	\begin{scope}[shift={(13,0)}]
	\draw[ultra thick]plot [smooth,tension=1] coordinates {(3,-4.5) (-90:5.2) (-3,-4.5)};
	\draw[ultra thick,fill=gray!10] (-90:11) ellipse (1.5);
	\draw[red] (-90:5.2) .. controls +(-45:6) and +(-30:5) .. (-90:12.5);
	\draw[red] (-90:5.2) .. controls +(-135:6) and +(-150:5) .. (-90:12.5);
    \draw[cyan, ultra thick] (-90:5.2)
        .. controls +(-145:8) and +(160:5) .. (0,-15)
        .. controls +(-20:4) and +(0:4) .. (0,-8)
        .. controls +(-170:3) and +(-170:3) .. (0,-12.5);
    \draw[Green,->, thick,>=stealth] (-84:6) to[bend left = 30] (-96:6);
    \draw[Green,->, thick,>=stealth] (-86:13) to[bend left = 50] (-94:13);
    \draw (-90:12.5)\DEC;
	\draw (-90:5.2)\DEC;
    \draw[blue] (-90:9.5)\nn;
    \draw (-90:14) node {1} ;
    \draw (-90:6.75) node {1} ;
    \draw (-80:16) node {$\wa_{\CA}^{\sharp}$} ;
    \draw (4,-11) node {$\wa$} ;
    \draw (-3.6,-11) node {$\CA$} ;
    \end{scope}
	\end{tikzpicture}
	\caption{Examples for the forward flip}\label{Examples}
    \end{figure}

We aim to show that the forward flip of an S-graph is also an S-graph.

\begin{lemma}
For an S-graph $\CAS$ and $\CA\in\CAS$, the forward flip $\CAS_{\CA}^{\sharp}$ is a full formal closed arc system.\label{lem:ffas}
\end{lemma}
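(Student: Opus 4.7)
The plan is to verify the two defining conditions of a full formal closed arc system on $\CAS_{\CA}^{\sharp}$: (a) any two distinct arcs have no intersection in $\surf^{\circ}$; (b) the arcs cut $\surf$ into polygons, each containing exactly one point of $\M$. Arcs $\wa \in \CAS$ with $\oInt^1(\wa,\CA) = 0$ are unchanged by \Cref{flip}, so the analysis reduces to $\CA[1]$ (which has the same underlying curve as $\CA$) and the smoothings $\wa^{\sharp}_{\CA} = \wa \wedge^1 \CA$ for those $\wa$ with $\oInt^1(\wa,\CA) > 0$.

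For (a), the smoothing construction in \Cref{smoothing} places $\wa \wedge_{a} \CA$ in an arbitrarily small neighborhood of $\wa \cup \CA$ in $\surf$. Since $\CAS$ is already a closed arc system, distinct original arcs meet only on $\partial\surf$; by choosing sufficiently thin neighborhoods one can represent all $\wa^{\sharp}_{\CA}$ and $\CA[1]$ so that their pairwise intersections lie in $\partial\surf$. The case $\oInt^1(\wa,\CA)=2$ uses \Cref{homotopy} to confirm that the double smoothing is well defined, so no new interior intersections are created.

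For (b), the key local fact from \Cref{inherite} is that when $\oInt^1(\wa,\CA)=1$ the triple $(\wa,\CA,\wa^{\sharp}_{\CA})$ bounds a contractible triangle on $\gmsx$. Thus each modification $\wa \to \wa^{\sharp}_{\CA}$ is realized by an ambient isotopy sliding $\wa$ across the endpoint it shares with $\CA$. Let $P_1, P_2$ be the two $\CAS$-polygons adjacent to $\CA$ (or one polygon visited twice if $\CA$ is one-sided); each contains a unique point of $\M$. All index-one triangles sit inside $P_1 \cup \CA \cup P_2$, and performing the isotopies simultaneously replaces this region with two new polygons sharing the edge $\CA[1]$, each still containing exactly one point of $\M$ (inherited from the polygons flanking $P_1, P_2$ across the flipped arcs). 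Polygons of the old decomposition not bordering $\CA$ remain intact, so the new collection tiles $\surf$ into $\M$-marked polygons as required.

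The main obstacle will be the case analysis at the two endpoints of $\CA$: whether each endpoint lies on an ordinary boundary component or on a binary $\Vot$ (where $\CA$ may admit twins in the sense of \Cref{twin}), whether $\CA$ is a loop with both endpoints coincident, and whether some $\wa$ shares both endpoints with $\CA$ so that $\oInt^1(\wa,\CA)=2$. In the twin case, \Cref{lem:twin} is needed to guarantee that the pseudo-intersections contributed by $\Dehn{\Vot}^{2}$-action are excluded from the counts and thus do not affect the local triangle picture used in (b).
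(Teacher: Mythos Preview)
Your argument for (a) has a genuine gap. The claim that ``by choosing sufficiently thin neighborhoods one can represent all $\wa^{\sharp}_{\CA}$ so that their pairwise intersections lie in $\partial\surf$'' fails precisely when some other arc $\wb\in\CAS$ has an end sitting \emph{inside} the index-$1$ angle $a\in\overrightarrow{\cap}^1(\wa,\CA)$ at a common boundary point. In that configuration the smoothing $\wa\wedge_a\CA$ must cross $\wb$ near the boundary no matter how thin a neighborhood you take, and the same obstruction breaks your assertion in (b) that ``all index-one triangles sit inside $P_1\cup\CA\cup P_2$''. The paper supplies the missing step: if $a=b+c$ with $b\in\overrightarrow{\cap}(\wa,\wb)$ and $c\in\overrightarrow{\cap}(\wb,\CA)$, then the S-graph condition forces ${\rm ind}(b),{\rm ind}(c)\ge 1$, hence ${\rm ind}(a)\ge 2$ by $1^{\circ}$ of \Cref{lemma:index}. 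So an index-$1$ angle is never cut out by a non-twin arc, which is what makes both (a) and (b) go through in the ordinary case.

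The twin case is not handled by your appeal to \Cref{lem:twin}. That lemma only tells you that pseudo-intersections occur between twins; it does not prevent a twin $\CA'$ of $\CA$ from sitting inside the angle $a$, and when it does, $\wa^{\sharp}_{\CA}$ genuinely crosses $\CA'$ in $\surf^{\circ}$ (see the paper's \Cref{fig:005}). The paper's fix is to replace $\CA'$ by ${\rm D}_{\Vot}^{-2}\cdot\CA'$, a different representative in the same $\Dfang$-orbit which avoids $\wa^{\sharp}_{\CA}$; one must then also check that this new representative does not cross any other arc of the system, which uses that the angle from $\CA'$ to $\CA$ at the binary point has non-positive index and hence is itself not cut out. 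None of this is covered by your proposal.
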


\begin{proof}
By flipping closed arcs $\CA_1,...,\CA_n$ in $\mathbb{S}$ one by one, we get a sequence $\mathbb{S}=\mathbb{S}_0\to\mathbb{S}_1\to\cdots\to\mathbb{S}_n=\mathbb{S}_{\CA}^{\sharp}$, where $\mathbb{S}_{i+1}=(\mathbb{S}_i\backslash\{\CA_i\})\cap\{(\CA_i)_{\CA}^{\sharp}\}$ for $1\leq i\leq n$. We prove our statement by induction. Assuming that $\mathbb{S}_j$ is a full formal closed arc system for some $1\leq j\leq n$, and then we consider $\mathbb{S}_{j+1}=(\mathbb{S}_j\backslash\{\wa\})\cap\{\wa_{\CA}^{\sharp}\}$ (we denote $\CA_j$ by $\wa$ here).

For case $\wa=\CA$ and case $\oInt^1(\wa,\CA)=0$, the underlying ungraded arcs in $\mathbb{S}_j$ all remain and hence $\mathbb{S}_{j+1}$ is still a full formal arc system. Now assume that $\wa\neq\CA$ and $\oInt^1(\wa,\CA)\neq 0$.

We claim that any $a\in\overrightarrow{\cap}^1(\wa,\CA)$ can not be cut out by any $\wb\in\mathbb{S}_j$ unless $\wb$ is a twin of $\CA$ or $\wa$. If $\wb\in\mathbb{S}_j$ is not a twin of $\CA$ or $\wa$, let $a=b+c$ for some $b\in\overrightarrow{\cap}(\wa,\wb)$ and $c\in\overrightarrow{\cap}(\wb,\CA)$. Since the gradings of the arc segments of $\wb$ are inherited from the arcs in $\mathbb{S}$ by Remark \ref{inherite}, we have ${\rm ind}(b)\geq 1$ and ${\rm ind}(c)\geq 1$. Then ${\rm ind}(a)\geq 2$ by $1^{\circ}$ of Lemma \ref{lemma:index}, which contradicts the fact that $a\in\overrightarrow{\cap}^1(\wa,\CA)$.

\textbf{Case 1:} any $a\in\overrightarrow{\cap}^1(\wa,\CA)$ is not cut out by arcs in $\mathbb{S}_j$.

First we prove that arcs in $\CAS_{j+1}$ can not intersect each other in $\mathbf{S}^{\circ}$. Since $\wa$ and $\CA$ do not intersect in $\surf^{\circ}$, $\wa^{\sharp}_{\CA}$ has no self-intersection in $\surf^{\circ}$. Then for any $\wb\in\CAS_j$, we need to show that it can not intersect $\wa_{\CA}^{\sharp}$ in $\mathbf{S}^{\circ}$. By the definition of $\wa_{\CA}^{\sharp}$, it is the smoothing out of $\wa$ and $\CA$ at angles of index 1. Then if $\wb$ intersects $\wa_{\CA}^{\sharp}$ in $\mathbf{S}^{\circ}$, we have $\wb$ intersects $\wa$ or $\CA$ in $\mathbf{S}^{\circ}$ or some angle $a\in\overrightarrow{\cap}^1(\wa,\CA)$ is cut out by $\wb$. But by the inductive assumption, $\wb$ can not intersect $\wa$ or $\CA$ in $\mathbf{S}^{\circ}$. And in our case any $a\in\overrightarrow{\cap}^1(\wa,\CA)$ is not cut out by arcs in $\mathbb{S}_j$, which is a contradiction.

Next We show that after flipping $\wa$ to $\wa_{\CA}^{\sharp}$, $\mathbb{S}_{j+1}$ still divides $\gmsx$ into polygons and each polygon contains exactly one open marked point. By the induction hypothesis, we know that $\mathbb{S}_j$ is a full formal closed arc system. This means that $\gmsx$ is divided into $\mathbb{S}_j$-polygons. When we flip $\wa$ to $\wa_{\CA}^{\sharp}$, the $\mathbb{S}_j$-polygons that do not contain $\wa$ as an edge will remain unchanged. However, there are exactly two $\mathbb{S}_j$-polygons, let's call them $\mathbb{D}_1$ and $\mathbb{D}_2$, that do contain $\wa$ as an edge. Then for any angle $a\in\overrightarrow{\cap}^1(\wa,\CA)$, since $a$ is not cut out by arcs in $\mathbb{S}_j$, $a$ is an interior angle of $\mathbb{D}_1$ or $\mathbb{D}_2$.

We have $\oInt^1(\wa,\CA)\leq 2$ by Lemma \ref{Ext1}. Then there are two subcases:
\begin{itemize}
    \item $\oInt^1(\wa,\CA)=1$: let $\overrightarrow{\cap}^1(\wa,\CA)=\{a\}$ and $a$ is an interior angle of $\mathbb{D}_1$. When we flip $\wa$ to $\wa_{\CA}^{\sharp}$, the polygon $\mathbb{D}_1$ leaves two edges $\wa$ and $\CA$ and gets an edge $\wa_{\CA}^{\sharp}$, while $\mathbb{D}_2$ leaves $\wa$ and gets $\CA$ and $\wa_{\CA}^{\sharp}$.
    \item $\oInt^1(\wa,\CA)=2$: let $\overrightarrow{\cap}^1(\wa,\CA)=\{a_1,\,a_2\}$, where $a_1$ is an interior angle of $\mathbb{D}_1$ and $a_2$ is an interior angle of $\mathbb{D}_2$. When we flip $\wa$ to $\wa_{\CA}^{\sharp}$, $\mathbb{D}_1$ and $\mathbb{D}_2$ both leave two edges $\CA$ and $\wa$ in clockwise and get two edges $\wa_{\CA}^{\sharp}$ and $\CA$ in clockwise.
\end{itemize}
In both subcases, $\mathbb{D}_1$ and $\mathbb{D}_2$ are still polygons after flipping $\wa$ to $\wa_{\CA}^{\sharp}$. Moreover, since the boundary edges in $\mathbb{D}_1$ and $\mathbb{D}_2$ do not change, each of $\mathbb{D}_1$ and $\mathbb{D}_2$ still contains exactly one open marked point. Thus $\mathbb{S}_{j+1}$ still divides $\gmsx$ into polygons and each polygon contains exactly one open marked point.

Now we see that $\mathbb{S}_{j+1}$ is a full formal closed arc system when each $a\in\overrightarrow{\cap}^1(\wa,\CA)$ is not cut out by arcs in $\mathbb{S}_j$.

\textbf{Case 2:} some $a\in\overrightarrow{\cap}^1(\wa,\CA)$ is cut out by $\wb\in\mathbb{S}_j$.

We have shown that $\wb$ is a twin of $\CA$ or $\wa$. There exist two subcases:
\begin{itemize}
    \item $\wb$ is the twin $\CA'$ of $\CA$, cf. pictures (a) in \Cref{fig:003};
    \item $\wb$ is the twin $\wa'$ of $\wa$, cf. pictures (b) in \Cref{fig:003}.
\end{itemize}
Without loss of generality, we assume that $\wb$ is the twin $\CA'$ of $\CA$.

\begin{figure}[htbp]
	\begin{tikzpicture}[scale=.3]
    \draw[ultra thick]plot [smooth,tension=1] coordinates {(3,-4.5) (-90:5.2) (-3,-4.5)};
	\draw[ultra thick,fill=gray!10] (-90:11) ellipse (1.5);
	\draw (-90:11) node {$\Vot$};
	\draw[red] (-90:5.2) .. controls +(-45:6) and +(-30:5) .. (-90:12.5);
	\draw[red] (-90:5.2) .. controls +(-135:6) and +(-150:5) .. (-90:12.5);
	\draw[red] (-90:5.2) .. controls +(-15:3) and +(130:3) .. (-60:11);
	\draw (-90:12.5)\DEC;
	\draw (-90:5.2)\DEC;
	\draw (-60:11)\DEC;
    \draw[blue] (-90:9.5)\nn;
    \draw[Green,->, thick,>=stealth]plot [smooth,tension=1] coordinates
    {(-75:6) (-87:6.3) (-97:6)};
    \draw (0,-7) node {$a$} ;
    \draw (4,-11) node {$\CA'$} ;
    \draw (-3.8,-11) node {$\CA$} ;
    \draw (5,-8) node {$\wa$} ;
    \draw (0,-15.5) node {(a)} ;

            \begin{scope}[shift={(16,0)}]
        	\draw[ultra thick]plot [smooth,tension=1] coordinates {(3,-4.5) (-90:5.2) (-3,-4.5)};
	\draw[ultra thick,fill=gray!10] (-90:11) ellipse (1.5);
	\draw (-90:11) node {$\Vot$};
	\draw[red] (-90:5.2) .. controls +(-45:6) and +(-30:5) .. (-90:12.5);
	\draw[red] (-90:5.2) .. controls +(-135:6) and +(-150:5) .. (-90:12.5);
	\draw[red] (-90:5.2) .. controls +(-165:3) and +(50:3) .. (-120:11);
	\draw (-90:12.5)\DEC;
	\draw (-90:5.2)\DEC;
	\draw (-120:11)\DEC;
    \draw[blue] (-90:9.5)\nn;
    \draw[Green,->, thick,>=stealth]plot [smooth,tension=1] coordinates
    {(-84:6) (-93:6.32) (-105:6)};
    \draw (0,-7) node {$a$} ;
    \draw (4,-11) node {$\wa$} ;
    \draw (-3.8,-11) node {$\wa'$} ;
    \draw (-5,-8) node {$\CA$} ;
    \draw (0,-15.5) node {(b)} ;
    \end{scope}
	\end{tikzpicture}
	\caption{Binary subcases}\label{fig:003}
    \end{figure}

Let $P$ be the common endpoint of $\CA$ and $\CA'$ on the binary $\Vot$ and $M$ be the another common endpoint. First we show that the angle from $\CA'$ to $\CA$ at $P$ can not be cut out by arcs in $\CAS_j$. Let angles $b\in\overrightarrow{\cap}(\wa,\CA')$ and $c\in\overrightarrow{\cap}(\CA',\CA)$ at $M$ satisfy $a=b+c$. Since ${\rm ind}(a)=1$ and ${\rm ind}(b)\geq 1$, by $1^{\circ}$ of Lemma \ref{lemma:index}, we have ${\rm ind}(c)\leq 0$. Thus the angle from $\CA'$ to $\CA$ at $P$ also has non-positive index by Remark \ref{winding_binary}, and then can not be divided into two angles with index both at least 1. Thus the angle at $P$ can not be cut out by arcs in $\CAS_j$. And as a corollary, we have $\oInt^1(\wa,\CA)=1$.

Next we show that after changing representatives, there are no intersections in $\surf^{\circ}$ between arcs in $\CAS_{j+1}$. Replacing $\CA'$ by ${\rm D}_{\Vot}^{-2}\cdot\CA'$, we see that there is no intersection in $\mathbf{S}^{\circ}$ between $\wa_{\CA}^{\sharp}$ and ${\rm D}_{\Vot}^{-2}\cdot\CA'$, cf. \Cref{fig:005}. Moreover, we claim that there is no intersection in $\mathbf{S}^{\circ}$ between ${\rm D}_{\Vot}^{-2}\cdot\CA'$ and any other arc in $\CAS_j$. Since $\mathbb{S}_j$ is a full formal closed arc system, if ${\rm D}_{\Vot}^{-2}\cdot\CA'$ and some other $\wg\in\mathbb{S}_j$ intersects in $\mathbf{S}^{\circ}$, then $\wg$ cuts out the angle from $\CA'$ to $\CA$ at $P$, which is a contradiction. Therefore, we can choose ${\rm D}_{\Vot}^{-2}\cdot\CA'$ as the representative of $\CA'$ such that arcs in $\mathbb{S}_{j+1}$ have no intersections in $\mathbf{S}^{\circ}$.

\begin{figure}[htbp]
	\begin{tikzpicture}[scale=.3]
    \draw[ultra thick]plot [smooth,tension=1] coordinates {(3,-4.5) (-90:5.2) (-3,-4.5)};
    \draw[ultra thick]plot [smooth,tension=1] coordinates {(6.2,-6.5) (-60:11) (6.2,-12.5)};
	\draw[ultra thick,fill=gray!10] (-90:11) ellipse (1.5);
	\draw (-90:11) node {$\Vot$};
	\draw[red] (-90:5.2) .. controls +(-45:6) and +(-30:5) .. (-90:12.5);
	\draw[red] (-90:5.2) .. controls +(-135:6) and +(-150:5) .. (-90:12.5);
	\draw[red] (-90:5.2) .. controls +(-15:3) and +(130:3) .. (-60:11);
	\draw (-90:12.5)\DEC;
	\draw (-90:5.2)\DEC;
	\draw (-60:11)\DEC;
    \draw[blue] (-90:9.5)\nn;
    \draw[Green,->, thick,>=stealth]plot [smooth,tension=1] coordinates
    {(-75:6) (-87:6.3) (-97:6)};
    \draw (0,-7) node {$a$} ;
    \draw (4,-11) node {$\CA'$} ;
    \draw (-3.8,-11) node {$\CA$} ;
    \draw (3.7,-6.5) node {$\wa$} ;
    \draw (9,-9) node {$\Longrightarrow$};
    \begin{scope}[shift={(16,0)}]
    \draw[ultra thick]plot [smooth,tension=1] coordinates {(3,-4.5) (-90:5.2) (-3,-4.5)};
    \draw[ultra thick]plot [smooth,tension=1] coordinates {(6.2,-6.5) (-60:11) (6.2,-12.5)};
	\draw[ultra thick,fill=gray!10] (-90:11) ellipse (1.5);
	\draw (-90:11) node {$\Vot$};
	\draw[red] (-90:5.2) .. controls +(-45:6) and +(-30:5) .. (-90:12.5);
	\draw[red] (-90:5.2) .. controls +(-135:6) and +(-150:5) .. (-90:12.5);
	\draw[cyan, ultra thick] (-60:11)
        .. controls +(160:1) and +(30:3) ..  (0,-7.5)
        .. controls +(-150:5) and +(-155:4) .. (0,-12.5);
	\draw (-90:12.5)\DEC;
	\draw (-90:5.2)\DEC;
	\draw (-60:11)\DEC;
    \draw[blue] (-90:9.5)\nn;
    \draw (4,-11) node {$\CA'$} ;
    \draw (-3.8,-11) node {$\CA$} ;
    \draw (3.7,-6.9) node {$\wa_{\CA}^{\sharp}$} ;
    \draw (9,-9) node {$\Longrightarrow$};
    \end{scope}
    \begin{scope}[shift={(32,0)}]
    \draw[ultra thick]plot [smooth,tension=1] coordinates {(3,-4.5) (-90:5.2) (-3,-4.5)};
    \draw[ultra thick]plot [smooth,tension=1] coordinates {(6.2,-6.5) (-60:11) (6.2,-12.5)};
	\draw[ultra thick,fill=gray!10] (-90:11) ellipse (1.5);
	\draw (-90:11) node {$\Vot$};
	\draw[red] (-90:5.2) .. controls +(-135:6) and +(-150:5) .. (-90:12.5);
	\draw[cyan, ultra thick] (-60:11)
        .. controls +(160:1) and +(30:3) ..  (0,-7.5)
        .. controls +(-150:5) and +(-155:4) .. (0,-12.5);
	\draw[red] (-90:5.2)
        .. controls +(-150:7) and +(-150:9) .. (-90:14.2)
        .. controls +(30:4) and +(20:2.5) .. (0,-8.5)
        .. controls +(-160:4) and +(-165:3) .. (0,-12.5);
	\draw (-90:12.5)\DEC;
	\draw (-90:5.2)\DEC;
	\draw (-60:11)\DEC;
    \draw[blue] (-90:9.5)\nn;
    \draw (-3.8,-11) node {$\CA$} ;
    \draw (3.7,-6.9) node {$\wa_{\CA}^{\sharp}$} ;
    \draw (2,-14.9) node{${\rm D}_{\Vot}^{-2}\cdot\CA'$};
    \end{scope}
	\end{tikzpicture}\vskip -.5cm
	\caption{Forward flip and ${\rm D}_{\Vot}^{-2}$-action}\label{fig:005}
    \end{figure}

Moreover, after choosing ${\rm D}_{\Vot}^{-2}\cdot\CA'$ as the representative of $\CA'$, we only consider the two $\CAS_j$-polygons containing $\wa$ as an edge as before. We get that the arc system $\mathbb{S}_{j+1}$ still divides $\gmsx$ into polygons and each polygon contains exactly one open marked point.

Now we see that $\mathbb{S}_{j+1}$ is a full formal closed arc system in the case that some $a\in\overrightarrow{\cap}^1(\wa,\CA)$ is not cut out by $\wb\in\mathbb{S}_j$.

By induction, we know that $\mathbb{S}_{\CA}^{\sharp}$ is a full formal closed arc system. In all, the proof is completed.
\end{proof}

Next we investigate the intersection indices between arcs in $\mathbb{S}_{\CA}^{\sharp}$.

\begin{lemma}
For an S-graph $\CAS$ and an arc $\CA\in\CAS$, we have $\oInt^{\rho}(\CA,\CA)=0$ for any $\rho\leq 1$.\label{lemma:oInt^1=0}
\end{lemma}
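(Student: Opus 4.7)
Plan: For $\rho\leq 0$, the equality $\oInt^{\rho}(\CA,\CA)=0$ is immediate from the S-graph condition specialized to $i=j$, so only $\rho=1$ requires a new argument. I argue by contradiction: assume $\oInt^{1}(\CA,\CA)\geq 1$, so that in the fixed minimal-position representatives there is a clockwise self-angle $a$ of $\CA$ at a common endpoint $p$ with $\mathrm{ind}(a)=1$. Admissibility together with the minimal-position assumption force $p\in\mathbf{Y}$, and $\CA$ must meet itself at $p$; thus either $\CA$ is a loop at $p$, or $p$ lies on a binary $\Vot_P$ and $a$ arises from a twin identification (to which Remark~\ref{winding_binary} applies to transport the index across the binary).

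The next step is to decompose $a$ into the consecutive sub-angles at $p$ cut out by the other rays of $\CAS$-arcs through $p$ (if any). Lemma~\ref{lemma:index}(1) gives $\mathrm{ind}(a)$ as the sum of these sub-angle indices. Each sub-angle is a clockwise oriented intersection between arcs of $\CAS$ (possibly the same arc), and by the S-graph condition combined with the already-handled $\rho\leq 0$ case of the present lemma, each such sub-angle has index $\geq 1$. Therefore $\mathrm{ind}(a)=1$ is only possible if $a$ is itself a single such sub-angle, meaning no other ray of $\CAS$ enters the angular region of $a$ at $p$, and so $a$ is a corner of a single $\CAS$-polygon $P$ whose two consecutive sides at $p$ are the two rays of $\CA$ (or of $\CA$ and its twin).

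To finish, I would iterate Lemma~\ref{lemma:index}(2) along a triangulation of $P$ by diagonals to obtain a polygon corner-sum identity whose right-hand side depends only on the combinatorics of $P$ (a discrete Gauss--Bonnet relation). Combined with the S-graph lower bound $\geq 1$ on every other corner of $P$, this identity forces $\mathrm{ind}(a)\leq 0$, contradicting $\mathrm{ind}(a)=1$ and completing the proof.

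The main obstacle will be making the polygon corner-sum identity precise as a consequence of Lemma~\ref{lemma:index}(2), together with cleanly handling two edge cases: the monogon case, where $P$ has no corner other than $a$ and the contradiction reduces directly to the $\rho\leq 0$ case of the lemma; and the binary-twin case, where Remark~\ref{winding_binary} is needed to identify the intersection indices at the two common endpoints of the twin pair so that the decomposition and polygon arguments apply on either side of the binary.
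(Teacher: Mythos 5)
Your reduction to $\rho=1$ and the first half of your argument coincide with the paper's proof: the index-$1$ self-angle $a$ must have its vertex on $\partial\surf$ (arcs of an S-graph have no interior intersections), and it cannot be subdivided by other arcs of $\CAS$, since by $1^{\circ}$ of \Cref{lemma:index} each sub-angle would be an oriented intersection between arcs of $\CAS$ of index at least $1$, forcing $\mathrm{ind}(a)\geq 2$. (The twin/binary discussion you add is largely a red herring here, since $a$ is a self-angle of the single arc $\CA$, whose two sides are the two ends of $\CA$ at the same point of $\mathbf{Y}$.)

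The genuine gap is in your concluding step. Because both sides of $a$ are the two ends of the one arc $\CA$, the $\CAS$-polygon having $a$ as a corner is automatically a monogon whose only edge is $\CA$: walking along its boundary from one side of $a$ you traverse $\CA$ once and come back to the other side of $a$, and the polygon is a disk since $\CAS$ is a full formal system. Hence your ``main case'' --- triangulating $P$ by diagonals and iterating $2^{\circ}$ of \Cref{lemma:index} to obtain a corner-sum identity --- is vacuous: there are no other corners and no diagonals, and $2^{\circ}$ of \Cref{lemma:index} is stated only for contractible triangles, so no polygon (let alone monogon) Gauss--Bonnet identity is actually available without proving a new statement. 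The edge case you defer is therefore the entire content, and your claim that it ``reduces directly to the $\rho\leq 0$ case'' does not hold: an index-$1$ self-angle is perfectly consistent with $\oInt^{\rho}(\CA,\CA)=0$ for all $\rho\leq 0$. The contradiction the paper uses is combinatorial/topological, not index-theoretic: a monogon bounded by $\CA$ alone contains no boundary edge of $\surf$, hence no open marked point, contradicting the requirement that every polygon of a full formal closed arc system contains exactly one point of $\M$ (equivalently, such a $\CA$ would be null-homotopic rel endpoints, which is excluded for closed arcs). Without this observation, or a proved corner-sum formula covering monogons, your argument does not close.
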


\begin{proof}
By the definition of an S-graph, we have $\oInt^{\rho}(\CA,\CA)=0$ for any $\rho\leq 0$. Thus we only need to show $\oInt^{1}(\CA,\CA)=0$. If $\oInt^{1}(\CA,\CA)\neq 0$, since $\CA$ can not intersect itself in $\surf^{\circ}$, let $\overrightarrow{\cap}^1(\CA,\CA)=\{a\}$, where the vertex of the angle $a$ is on $\partial\surf$. Then $a$ can not be cut out by other arcs in $\CAS$ by $1^{\circ}$ of Lemma \ref{lemma:index}, and thus $a$ is an interior angle of an $\mathbb{S}$-polygon $\mathbb{D}$. But now $\mathbb{D}$ has only one edge $\CA$ and thus does not contain a boundary edge, which is a contradiction.
\end{proof}

\begin{proposition}
Any forward flip $\CAS_{\CA}^{\sharp}$ of an S-graph $\mathbb{S}$ with respect to $\CA\in\mathbb{S}$ is still an S-graph.\label{StoS}
\end{proposition}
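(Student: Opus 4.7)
My plan is to verify the two defining conditions of an S-graph for $\CAS^{\sharp}_{\CA}$. \Cref{lem:ffas} has already shown that $\CAS^{\sharp}_{\CA}$ is a full formal closed arc system, so the only remaining task is to establish
\[
\oInt^{\rho}\bigl(\wa^{\sharp}_{\CA},\wb^{\sharp}_{\CA}\bigr)=0 \quad\text{for every } \wa,\wb\in\CAS \text{ and every } \rho\leq 0.
\]
I will fix the representatives produced by \Cref{flip}, together with the $\Dehn{\Vot}^{-2}$-shift from the proof of \Cref{lem:ffas} in the binary subcase, so that any two arcs of $\CAS^{\sharp}_{\CA}$ lie in a minimal position and genuine oriented intersections occur only at points of $\partial\surf$.

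Next I will split the verification into three cases. If $\wa=\wb=\CA$, then both flipped arcs are $\CA[1]$ and $\oInt^{\rho}(\CA[1],\CA[1])=\oInt^{\rho}(\CA,\CA)$, which vanishes for $\rho\leq 0$ because $\CAS$ is an S-graph. If exactly one of $\wa,\wb$ equals $\CA$, say $\wa=\CA$, then $\wa^{\sharp}_{\CA}=\CA[1]$ while $\wb^{\sharp}_{\CA}=\wb\wedge^{1}\CA$. If neither equals $\CA$, then both new arcs are smoothings with $\CA$; this is the main case.

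For the two cases involving a smoothing, I will trace each potential genuine oriented intersection $q\in\partial\surf$ from $\wa^{\sharp}_{\CA}$ to $\wb^{\sharp}_{\CA}$ back to the original arcs. By \Cref{inherite}, the arc $\wb\wedge^{1}\CA$ is locally composed of segments inherited from $\wb$ and $\CA$, and similarly for $\wa\wedge^{1}\CA$. Consequently the short clockwise arc at $q$ realising the intersection decomposes as a concatenation of shorter clockwise arcs, each realising an oriented intersection of one of the types $(\wa,\CA)$, $(\CA,\wb)$, $(\wa,\wb)$, or $(\CA,\CA)$. By \Cref{lemma:index}$(1^{\circ})$ the indices add along this decomposition, and by the S-graph hypothesis on $\CAS$ together with \Cref{lemma:oInt^1=0}, every contributing index is $\geq 1$, being exactly $1$ at the corners where smoothing took place. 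The single shift $[1]$ of $\CA$ appearing in case (ii) reduces at most one contribution by one, so the total index at $q$ remains positive, contradicting $\rho\leq 0$.

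The main obstacle I anticipate is the bookkeeping in the binary subcase of \Cref{lem:ffas}, where a twin $\CA'$ of $\CA$ (or a twin $\wa'$ of $\wa$) must be replaced by $\Dehn{\Vot}^{-2}\cdot\CA'$ before the additivity formula can be applied. I will combine \Cref{winding_binary} (equating the indices of the two angles at a binary) with \Cref{lem:twin} (which confines pseudo-intersections to twin pairs) to verify both that the shifted representative still respects the additivity and that pseudo-intersections among the arcs of $\CAS^{\sharp}_{\CA}$ remain confined to twin pairs and so do not affect the genuine-intersection count. Assembling the local index estimates across all boundary points will then yield the required vanishing and complete the proof.
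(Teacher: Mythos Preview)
Your overall strategy matches the paper's: invoke \Cref{lem:ffas} for the full formal closed arc system condition, then use the segment-inheritance description of \Cref{inherite} together with index additivity to bound all intersection indices from below. However, there is a genuine gap in your treatment of case~(ii).

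The roles of $\wa$ and $\wb$ in $\oInt^{\rho}(\wa^{\sharp}_{\CA},\wb^{\sharp}_{\CA})$ are \emph{not} symmetric, so ``say $\wa=\CA$'' does not cover the case $\wb=\CA$. In fact the case you chose is the easier one: if $\wa=\CA$ then shifting the first argument by $[1]$ \emph{increases} every index by one, so all contributions are $\geq 2$. The delicate case is $\wb=\CA$, where $\wb^{\sharp}_{\CA}=\CA[1]$ and the shift \emph{decreases} indices by one. Here a segment of $\wa\wedge^{1}\CA$ inherited from $\wa$ meets $\CA[1]$ with index ${\rm ind}(\wa,\CA)-1\geq 1-1=0$, so your bound ``each contributing index $\geq 1$, minus one for the shift, hence positive'' fails: you only get $\geq 0$, which does not exclude $\rho=0$.

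The missing observation, which the paper supplies in its Case~2.2, is that at any endpoint where the surviving segment of $\wa\wedge^{1}\CA$ is inherited from $\wa$ (rather than from $\CA$), there was by construction \emph{no} index-$1$ angle from $\wa$ to $\CA$ at that endpoint---otherwise the smoothing would have replaced the $\wa$-end by a $\CA$-end. Hence at such endpoints ${\rm ind}(\wa,\CA)\geq 2$, giving ${\rm ind}(\wa,\CA[1])\geq 1$ as required. (The paper also separately disposes of the subcase $\oInt^{1}(\wa,\CA)=0$, where $\wa^{\sharp}_{\CA}=\wa$ and all indices from $\wa$ to $\CA$ are already $\geq 2$.) Once you insert this argument, your proof goes through and is essentially the paper's.
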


\begin{proof}
By Lemma \ref{lem:ffas}, $\mathbb{S}_{\CA}^{\sharp}$ is a full formal closed arc system. We only need to show that $\oInt^{\rho}(\wa_{\CA}^{\sharp},\wb_{\CA}^{\sharp})=0$ for any $\rho\leq 0$ and any $\wa,\wb\in\mathbb{S}$.

If $\wa_{\CA}^{\sharp}$ and $\wb_{\CA}^{\sharp}$ are twins, $\oInt^{\rho}(\wa_{\CA}^{\sharp},\wb_{\CA}^{\sharp})=0$ for any $\rho\leq 0$ by Lemma \ref{lem:twin}. Now we assume $\wa_{\CA}^{\sharp}$ and $\wb_{\CA}^{\sharp}$ are not twins.

\begin{itemize}
    \item[\textbf{Case 1:}] $\wb\neq\CA$. Then the gradings of arc segments of $\wa_{\CA}^{\sharp}$ are inherited from $\wa$, $\CA$ or $\CA[1]$, and the gradings of arc segments of $\wb_{\CA}^{\sharp}$ are inherited from $\wb$ or $\CA$ by Remark \ref{inherite}. Hence, intersection indices from $\wa_{\CA}^{\sharp}$ to $\wb_{\CA}^{\sharp}$ are at least 1 by the definition of an S-graph.
    \item[\textbf{Case 2:}] $\wb=\CA$.
    \begin{itemize}
        \item[\textbf{Case 2.1:}] $\oInt^1(\wa,\CA)=0$. In this subcase, $\wa_{\CA}^{\sharp}=\wa$, $\wb_{\CA}^{\sharp}=\CA[1]$, and intersection indices from $\wa$ to $\CA$ are at least 2. Then intersection indices from $\wa$ to $\CA[1]$ are at least 1.
        \item[\textbf{Case 2.2:}] $\oInt^1(\wa,\CA)>0$. We know that the gradings of arc segments of $\wa_{\CA}^{\sharp}$ are inherited from $\wa$, $\CA$ or $\CA[1]$. Since $\oInt^{\rho}(\CA,\CA)=0$ for any $\rho\leq 1$ by Lemma \ref{lemma:oInt^1=0}, we have $\oInt^{\rho}(\CA,\CA[1])=0$ and $\oInt^{\rho}(\CA[1],\CA[1])=0$ for any $\rho\leq 0$. And we have $\oInt^{\rho}(\wa,\CA[1])=0$ for any $\rho\leq -1$ by the definition of an S-graph. Moreover, if the grading of an arc segment of $\wa_{\CA}^{\sharp}$ are inherited from $\wa$, by the definition of the forward flip, there is not an intersection of index 1 from this arc segment to $\CA$, otherwise we should take smoothing out. Therefore, the intersection indices from any arc segment of $\wa_{\CA}^{\sharp}$ to $\CA[1]$ are at least 1.
    \end{itemize}
\end{itemize}

In conclusion, we have $\oInt^{\rho}(\wa_{\CA}^{\sharp},\wb_{\CA}^{\sharp})=0$ for any $\rho\leq 0$ as claimed.
\end{proof}

\subsection{Isomorphism between exchange graphs}

\begin{definition}[cf. {\cite{HKK,CHQ}}]
\label{def:exchange graph}
We define the \emph{exchange graph} ${\rm EG}(\gmsx)$ \emph{of S-graphs} on $\gmsx$ to be an oriented graph whose vertices are S-graphs on $\gmsx$ and edges are the forward flips defined as Definition \ref{flip}.

Moreover, we define the \emph{principal component} ${\rm EG}^{\circ}(\gmsx)$ of ${\rm EG}(\gmsx)$ to be the connected component of ${\rm EG}(\gmsx)$ containing the initial S-graph $X^{-1}(\mathcal{H}_{\ac})$.
\end{definition}

By Proposition \ref{StoS}, ${\rm EG}(\gmsx)$ is well-defined and $(n,n)$-regular.

Now we proceed to give our first main result: the isomorphism between the oriented graphs ${\rm EG}^{\circ}(\gmsx)$ and ${\rm EG}^{\circ}\mathcal{D}^b(\sg)$. This theorem gives the geometric realization of hearts and their simple tilting in $\mathcal{D}^b(\sg)$.

\begin{theorem}
The bijection $X$ in \Cref{thm:int formula} induces an isomorphism of oriented graphs
\[\begin{array}{rcl}
X:\,\,\,\,{\rm EG}^{\circ}(\gmsx) & \to & {\rm EG}^{\circ}\mathcal{D}^b(\sg) \\
\mathbb{S}=\{\CA\} & \mapsto & \mathcal{H}_{\mathbb{S}}=\langle X_{\CA}|\,\CA\in\CAS\rangle.
   \end{array}
\]
\label{main}
\end{theorem}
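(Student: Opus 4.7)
The plan is to prove the theorem in four steps: (i) show each S-graph $\mathbb{S}$ determines a finite heart $\mathcal{H}_{\mathbb{S}}$ via a simple-minded collection, (ii) match the initial vertices, (iii) show forward flips correspond to simple forward tilts, and (iv) conclude by connectivity of the principal components.

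For step (i), I would translate the S-graph conditions through the intersection formula of \Cref{thm:int formula}. The defining condition $\oIntd(\CA_i,\CA_j)=0$ for all $\rho\leq 0$ becomes $\Hom_{\mathcal{D}^b(\sg)}(X_{\CA_i},X_{\CA_j}[\rho])=0$ for $\rho\leq 0$ (and $i\ne j$), while $\End(X_{\CA_i})=\k$ follows from \Cref{lemma:oInt^1=0} applied to $\CA_i$ together with the fact that the ``identity'' loop is not counted in $\oIntd$. Combined with the full-formality (which implies the $X_{\CA_i}$ generate the triangulated category), this shows $\{X_{\CA_i}\}$ is a simple-minded collection, and by standard Koenig-Yang theory it generates a unique finite heart $\mathcal{H}_{\mathbb{S}}=\langle X_{\CA}\mid\CA\in\mathbb{S}\rangle$ whose simples are the $X_{\CA_i}$. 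Step (ii) is immediate from \Cref{initial_heart}: the initial S-graph $\ac^\ast$ has $X(\ac^\ast)=\{S_i\}=\Sim\mathcal{H}_{\ac}$.

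The heart of the argument is step (iii): for any $\CA\in\mathbb{S}$, I claim $\mathcal{H}_{\mathbb{S}_{\CA}^{\sharp}} = \tilt{\mathcal{H}_{\mathbb{S}}}{\sharp}{X_{\CA}}$. The case $\wa=\CA$ gives $X_{\CA[1]}=X_{\CA}[1]$ trivially. For $\wa\neq\CA$, the case $\oInt^1(\wa,\CA)=0$ yields $\wa_{\CA}^{\sharp}=\wa$, matching $\Ext^1(X_{\wa},X_{\CA})=0$. The essential computation is when $\oInt^1(\wa,\CA)=1$ with angle $a\in\overrightarrow{\cap}^1(\wa,\CA)$: by the intersection formula this corresponds to a non-split extension giving a triangle
\[
X_{\CA} \longrightarrow X_{\wa\wedge_a\CA} \longrightarrow X_{\wa} \longrightarrow X_{\CA}[1],
\]
and I would verify that $X_{\wa\wedge_a\CA}$ is precisely the new simple produced by the Koenig-Yang formula for simple forward tilting. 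The verification proceeds by computing $\Hom$-spaces from $X_{\wa\wedge_a\CA}$ to the other $X_{\wb}$ using the intersection numbers $\oIntd(\wa\wedge_a\CA,\wb)$, which can be split along the smoothing geometrically using \Cref{lemma:index}, and comparing with the long exact sequence from the triangle above. The case $\oInt^1(\wa,\CA)=2$ is handled by iterating this (using \Cref{homotopy} to see both orders give the same result), producing the correct iterated extension. For the twin/binary subcase appearing in the proof of \Cref{lem:ffas}, the replacement $\CA'\mapsto\Dehn{\Vot}^{-2}\CA'$ leaves the object unchanged on the algebraic side since we work modulo $\Dfang$, so no extra care is needed there.

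Step (iv) is then formal: $X$ on arcs is a bijection (\Cref{thm:int formula}), hence $X$ on collections is injective; both principal components are by definition the connected components of their respective initial vertices; step (ii) matches initial vertices; and step (iii) shows the edge sets correspond bijectively under $X$ (using \Cref{StoS} for well-definedness on the S-graph side and the Koenig-Yang $(n,n)$-regularity on the heart side). An induction on distance from the initial vertex in the exchange graphs then gives the isomorphism. The principal obstacle will be step (iii), specifically the identification of $X_{\wa\wedge^1\CA}$ with the Koenig-Yang tilted simple; the core difficulty is verifying the triangle above intrinsically from the intersection formula (without invoking the extra structure of a silting object), which forces a careful bookkeeping of angles cut out by intermediate arcs along the smoothed-out curve.
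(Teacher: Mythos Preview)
Your inductive skeleton (match initial vertices, show flip corresponds to tilt, conclude by connectivity) is the same as the paper's, but the execution of step~(iii) has a genuine gap. The intersection formula of \Cref{thm:int formula} records only \emph{dimensions} of Hom-spaces; it does not give you morphisms or cones. When you write down the triangle
\[
X_{\CA}\longrightarrow X_{\wa\wedge_a\CA}\longrightarrow X_{\wa}\longrightarrow X_{\CA}[1],
\]
you are asserting exactly the statement to be proved. Knowing $\dim\Ext^1(X_{\wa},X_{\CA})=1$ yields a non-split extension with \emph{some} middle term, and your proposed verification---compute $\oIntd(\wa\wedge_a\CA,\wb)$ for all $\wb\in\CAS$ and compare with the long exact sequence applied to the Koenig--Yang triangle for $T^{\sharp}$---does not force $T^{\sharp}\cong X_{\wa\wedge_a\CA}$: two indecomposables in a derived category with identical Hom-dimensions to a finite set of test objects need not be isomorphic. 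The ``careful bookkeeping of angles'' you anticipate is not the obstacle; the obstacle is that no amount of numerical bookkeeping via the intersection formula alone upgrades to an object-level identification.

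The paper takes a different route at precisely this point. It first matches Grothendieck-group classes via \Cref{K0 vs H1}, obtaining $\xi^{-1}[T^{\sharp}]=[(\CA_T)^{\sharp}_{\CA_S}]$ in $H_1(\gmsx)$, and then spends all of Appendix~B on an exhaustive case analysis (binary-free cases F1--F3.5 and binary cases B1--B2.3, according to how many endpoints $\CA_S,\CA_T$ share and whether twins are present) to check that in every configuration the smoothed arc $(\CA_T)^{\sharp}_{\CA_S}$ is admissible and unknotted, hence lies in the domain of the bijection $X$. This case analysis is what the paper uses to close the gap between the $K_0$-identity and $T^{\sharp}=X_{(\CA_T)^{\sharp}_{\CA_S}}$; your proposal attempts to bypass it but supplies no replacement mechanism. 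Your step~(i) is also extra relative to the paper (which never proves that an arbitrary S-graph yields a simple-minded collection, working instead purely inductively from the canonical heart), and the claim there that full-formality of the arc system implies generation of $\mathcal{D}^b(\sg)$ would itself require justification.
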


The proof of \Cref{main} will be given in the next subsection.

\subsection{Proof of \texorpdfstring{\Cref{main}}{}}
\label{classify}
We use induction. For the starting case, we know that $X$ in \Cref{thm:int formula} gives the correspondence between the initial S-graph $X^{-1}(\mathcal{H}_{\ac})$ of $\gmsx$ and the canonical heart $\mathcal H_{\ac}$ of $\mathcal{D}^b(\sg)$ by Remark \ref{initial_heart}.

Now we assume that $X(\mathbb{S})={\rm Sim}\,\mathcal{H}$ for an S-graph $\mathbb{S}$ on $\gmsx$ and a finite heart $\mathcal{H}$ in $\mathcal{D}^b(\sg)$. We only need to show that $X(\mathbb{S}_{\CA}^{\sharp})={\rm Sim}\,\mathcal{H}_{X_{\CA}}^{\sharp}$ for any $\CA\in\mathbb{S}$.

For any $S,T\in{\rm Sim}\,\mathcal{H}$, consider the simple tilting:
\[\begin{array}{rcl}
    \h & \xrightarrow{S} & \h_S^\sharp \\
     \{\XX\} & \mapsto & \{\XX^\sharp\},
   \end{array}
\]
where the simple $\XX$ in $\h$ becomes the simple $\XX^\sharp=\psi_S^{\sharp}(\XX)$ in $\h_S^\sharp$ by the simple tilting formula (cf. \cite[Proposition 5.4]{KQ}). We claim that $\XX^{\sharp}$ is an arc object and $\CA_{\XX^{\sharp}}=(\CA_T)_{\CA_S}^{\sharp}$.

By the simple tilting formula, we have a distinguished triangle
\[T\to S[1]\otimes\mathrm{Ext}^1(T,S)\to T^{\sharp}\to T[1].\]

If $\XX=S$, we have $\XX^{\sharp}=S[1]$. Thus $\CA_{T^{\sharp}}=\CA_S[1]=(\CA_S)_{\CA_S}^{\sharp}$.

If $\XX\neq S$ and $\mathrm{Ext}^1(\XX,S)=0$, we have $\XX^{\sharp}=\XX$. Thus $\CA_{T^{\sharp}}=\CA_T=(\CA_T)_{\CA_S}^{\sharp}$.

What is left to consider is the case that $\XX\neq S$ and ${\rm dim}\,\mathrm{Ext}^1(\XX,S)>0$. By Lemma \ref{Ext1}, we have ${\rm dim}\,\mathrm{Ext}^1(\XX,S)=1$ or $2$. Let us consider the Grothendieck group. 

From the distinguished triangle above, we have
\[[T^{\sharp}]=-[T]-\mathrm{dim}\,\mathrm{Ext}^1(T,S)[S]\]
in the Grothendieck group $K_0(\mathcal{D}^b(\sg))$. By Lemma \ref{K0 vs H1}, we have
\[\xi^{-1}[T^{\sharp}]=-\xi^{-1}[T]-\mathrm{dim}\,\mathrm{Ext}^1(T,S)\xi^{-1}[S]\]
in $H_1(\gmsx,\Y,\tau)$.

By the induction hypothesis, $T$ and $S$ are arc objects. By \Cref{thm:int formula}, we have $\overrightarrow{\mathrm{Int}}^1(\CA_T,\CA_S)=\mathrm{dim}\,\mathrm{Ext}^1(T,S)$, and then
\[\xi^{-1}[T^{\sharp}]=-[\CA_{T}]-\oInt^1(\CA_T,\CA_S)[\CA_{S}].\]

When ${\rm dim}\,(\XX,S)^1=1$, the mutation $(\CA_T)_{\CA_S}^{\sharp}$ is homotopic to $\CA_S^{-1}\cdot\CA_T^{-1}$ relatively to the set $\Y$ of closed marked points. And when ${\rm dim}\,(\XX,S)^1=2$, the mutation $(\CA_T)_{\CA_S}^{\sharp}$ is homotopic to $(\CA_S^{-1}\cdot\CA_T^{-1})\cdot\CA_S^{-1}$ relatively to $\Y$ by Remark \ref{homotopy}. Then we have
\[[(\CA_T)_{\CA_S}^{\sharp}]=-[\CA_{T}]-\oInt^1(\CA_T,\CA_S)[\CA_{S}]\]
in $H_1(\gmsx,\Y,\tau)$, and thus
\[\xi^{-1}[T^{\sharp}]=[(\CA_T)_{\CA_S}^{\sharp}]\]
in $H_1(\gmsx,\Y,\tau)$.

That is, $(\CA_T)_{\CA_S}^{\sharp}$ is a representative of the homology class $\xi^{-1}[T^{\sharp}]$.

In order to finish the induction, we also need to show that $T^{\sharp}$ is an arc object. It will be done using case by case study in Appendix \ref{appendix-2}. In each cases there, we see that $(\CA_T)_{\CA_S}^{\sharp}$ is an unknotted arc, and then $T^{\sharp}$ is an arc object.

By induction, we have $X(\mathbb{S}_{\CA}^{\sharp})={\rm Sim}\,\mathcal{H}_{X(\CA)}^{\sharp}$.

Now we have finished the proof of \Cref{main}.
\section{Quadratic differentials as stability conditions}
\label{quad_vs_stab}
In this section, we will use the isomorphism $X$ in \Cref{main} to establish a correspondence between quadratic differentials and stability conditions by extending the isomorphism between two oriented graphs into an injection between these two moduli spaces, following the approach outlined in \cite{BS,BMQS,CHQ}. The necessary background on quadratic differentials is reviewed in \Cref{quadratic differentials}.
\subsection{Basic notions of quadratic differentials}
\label{quadratic differentials}
Let $C$ be a compact Riemann surface and $K_C$ be the canonical bundle over $C$. A \emph{holomorphic quadratic differential} (resp. \emph{meromorphic quadratic differential}) over $C$ is a holomorphic (resp. meromorphic) section of $K_C^{\otimes 2}$. Let $\varphi$ be a meromorphic quadratic differential on $C$ that is holomorphic and non-vanishing away from a finite subset $D\subset C$. Each point in $D$ is either a zero, a pole, a regular point, or a transcendental singularity of $\varphi$, of the form $\mathrm{exp}(f(z))z^{-l}g(z)dz^2$ where $f$ has a pole of order $k$ at $z=0$ and $g$ is a non-zero holomorphic function near $z=0$, called an \emph{exponential singularity of index $(k,l)$}. Thus, $D$ consists of the zeros, poles, and exponential singularities of $\varphi$, as well as possibly some additional marked points where $\varphi$ is holomorphic and non-vanishing.

In addition to the complex-analytic point of view on quadratic differentials, there is also a metric point of view. Specifically, $|\varphi|$ defines a flat Riemannian metric on the complement $C \backslash D$, which means that $C\backslash D$ has the structure of a metric space that is locally isometric to the Euclidean plane. However, $C\backslash D$ is typically not complete as a metric space, and its metric completion, denoted by $\overline{C\backslash D}$, involves adding a finite number of \emph{conical points}.
More precisely, $\overline{C\backslash D}$ has a conical point with cone angle $(n+2)\pi$ for each zero of order $n$ of $\varphi$, a conical point of cone angle $\pi$ for each simple pole of $\varphi$, and $n$ infinite-angle conical points for each exponential singularity of index $(n,m)$, for any $m\in\mathbb Z$, as well as a (non-singular) point with cone angle $2\pi$ for each regular point of $\varphi$ in $D$. Note that the metric is already complete near higher order poles of $\varphi$, so these do not lead to any additional conical points. The evidence supporting these assertions can be found in \cite{HKK}. The horizontal foliation of $\varphi$, denoted by $\mathrm{hor}(\varphi)$, is obtained by pulling back the horizontal foliation of normal charts on the Euclidean plane. The leaves of the horizontal foliation are referred to as horizontal trajectories of $\varphi$. We are particularly interested in the following types of trajectories:
\begin{itemize}
\item \emph{Generic trajectories}: avoid the conical points and converge ``to infinity'' (toward a higher order pole or exponential singularity) in both directions.
\item \emph{Saddle trajectories}: converge to conical points in both directions.
\item \emph{Separating trajectories}: approach a conical point in one direction and escape to infinity in the other direction.
\item \emph{Recurrent trajectories}: be recurrent in at least one direction.
\end{itemize}
In general, a \emph{saddle connection} is a maximal straight arc of some phrase $\theta$ which converge to conical points in both directions. A saddle trajectory is a horizontal saddle connection.

The surface $C$ can have a total area that is either finite or infinite, and the area of $\varphi$ is finite if and only if $\varphi$ has no higher order poles or exponential singularities. The study of quadratic differentials with finite area is of interest in ergodic theory. In contrast, quadratic differentials $\varphi$ with infinite area are closely related to full formal arc systems, as observed in \cite{HKK}. To elaborate, assuming that $|\varphi|$ has at least one conical point (excluding the flat plane and flat cylinder), a generic choice of $\theta\in\mathbb R$ yields a \emph{horizontal strip decomposition} of the horizontal foliation of $e^{i\theta}\varphi$. This implies that, after possibly replacing $\varphi$ by $e^{i\theta}\varphi$, we can assume that the horizontal foliation $\mathrm{hor}(\varphi)$ has no finite-length leaves, and thus contains only generic and separating trajectories.

By definition, each separating trajectory is associated with a conical point, and a conical point with cone angle $n\pi$ gives rise to $n$ separating trajectories. These trajectories divide the surface into \emph{horizontal strips} that are foliated by one-parameter families of generic trajectories. The \emph{height} of each horizontal strip is given by $a\in(0,\infty]$, and they are isometric to $(0,a)\times\mathbb R$. The boundary of a horizontal strip with finite (resp. infinite) height consists of four (resp. two) separating trajectories, some of which may be identified.

A \emph{flat surface} $(C,D,\varphi)$ is defined as a compact Riemann surface $C$ together with a quadratic differential $\varphi$ such that $D$ is a finite set of exponential singularities.

\subsection{Isomorphisms between moduli spaces}
We begin by reviewing the construction of graded marked surfaces from flat surfaces (cf. \cite{HKK, IQ2}).
Let $(C,D,\varphi)$ be a flat surface. The associated graded marked surface $\surf^{\lambda}(C,D,\varphi)=(\surf,\M,\Y,\lambda)$ is obtained by performing the real blow-up of $C$ at all the points in $D$, such that
\begin{enumerate}
\item each boundary component of $\surf$ correspond to the real blow-up of an exponential singularity of index $(k,l)$;
\item the set $\M$ of open marked points corresponds to the set of directions on $\partial\surf$ from which horizontal trajectories approach, where there are $k$ such directions with respect to the $\varphi$-metric completion for an exponential singularity of index $(k,l)$;
\item the set $\Y$ of closed marked points corresponds to the set of directions between the points in $\M$, and is therefore dual to the points in $M$ on the boundaries;
\item $\lambda$ is the horizontal foliation of $\varphi$.
\end{enumerate}
Furthermore, if we choose a subset of boundary components of $\surf$ that come from exponential singularities of index $(1,1)$ as the set of binaries $\vot$, then we obtain the GMSb $\gmsx(C,D,\varphi)=(\surf,\M,\Y,\vot,\lambda)$ associated to $(C,D,\varphi)$. It is worth noting that any GMSb $(\surf, \M, \Y, \vot, \lambda)$ can be realized as the real blow-up of some flat surface $(C, D, \varphi)$. Additionally, the choice of boundary components corresponding to $\vot$ is based on the winding number around a binary and \cite[Proposition 6.26]{IQ2}.

Assuming $\surf^{\lambda}$ is a GMS, an $\surf^{\lambda}$-framed quadratic differential (cf. \cite{HKK,IQ2}) on $\surf^{\lambda}$ consists of a flat surface $(C,D,\varphi)$ together with an isomorphism of marked surfaces $f:\surf^{\lambda}\to\surf^{\lambda}(C,D,\varphi)$. Two $\surf^{\lambda}$-framed quadratic differentials $(C,D,\varphi,f)$ and $(C',D',\varphi',f')$ are considered equivalent if there exists an isomorphism of marked surfaces $g:\surf^{\lambda}(C,D,\varphi)\to\surf^{\lambda}(C',D',\varphi)$, and $(f')^{-1}gf$ is isotopic to the identity as an isomorphism of graded marked surfaces. Denote by $\mathrm{FQuad}(\surf^{\lambda})$ the \emph{moduli space of $\surf^{\lambda}$-framed quadratic differentials}. We can now state the similar definition involving binaries.

\begin{definition}
\label{moduli-quad}
Let $\gmsx$ be a GMSb. We define the \emph{moduli space of $\gmsx$-framed quadratic differentials} $\mathrm{FQuad}(\gmsx)$ as follows. A point in $\mathrm{FQuad}(\gmsx)$ is represented by a flat surface $(C,D,\varphi)$ together with an isomorphism of marked surfaces with binary $f:\gmsx\to\gmsx(C,D,\varphi)$. Two points $(C,D,\varphi,f)$ and $(C',D',\varphi',f')$ are equivalent if
\begin{itemize}
\item there exists an isomorphism of marked surfaces with binary $g:\gmsx(C,D,\varphi)\to\gmsx(C',D',\varphi)$;
\item $(f')^{-1}gf$ is isotopic to an element in $\Dfang$.
\end{itemize}
\end{definition}

If we replace the second equivalent condition with
\begin{itemize}
\item $(f')^{-1}gf$ is isotopic to the identity,
\end{itemize}
then we obtain an open subset of $\mathrm{FQuad}(\surf^{\lambda})$ defined in \cite{HKK,IQ2}, denoted by $\mathrm{FQuad}_{\vot}(\surf^{\lambda})$, which satisfies the following relation:
\begin{equation}
\label{relation-moduli}
\mathrm{FQuad}(\gmsx) \cong \mathrm{FQuad}_{\vot}(\surf^{\lambda})/\Dfang,
\end{equation}
which means that $\mathrm{FQuad}(\gmsx)$ can be interpreted as a $\Dfang$-quotient of an open subet of $\mathrm{FQuad}(\surf^{\lambda})$.

Recall that there is a well-known affine structure on $\mathrm{FQaud}(\surf^{\lambda})$ given by the period map (cf. \cite{HKK,IQ2}). Using a similar method, an affine structure on $\mathrm{FQuad}(\gmsx)$ can also be defined as follows. Note that an isomorphism of marked surfaces with binary $\surf_{\vot}\to\surf'_{\vot'}$ induces an isomorphism of abelian groups $H_1(\surf_{\vot})\to H_1(\surf'_{\vot'})$. Given a flat surface $(C,D,\varphi)$, we have the map
\begin{equation}
\int : H_1(\gmsx(C,D,\varphi))\to\mathbb C, \quad \wg\mapsto \int_{\wg}\sqrt{\varphi}.
\end{equation}
Thus, if $(C,D,\varphi,f)\in\mathrm{FQuad}(\gmsx)$, we obtain then a map $H_1(\surf_{\vot})\to\mathbb C$. These maps combine to give a continuous map
\begin{equation}
\label{period map}
\prod\nolimits_{\vot}: \mathrm{FQuad}(\gmsx)\to\mathrm{Hom}(H_1(\gmsx),\mathbb C),
\end{equation}
called the \emph{period map} on $\mathrm{FQuad}(\gmsx)$. Based on the observation \eqref{relation-moduli} and \cite[Theorem 2.1]{HKK}, it turns out that $\prod_{\vot}$ is a local homeomorphism, which can be used to define an affine structure on $\mathrm{FQuad}(\gmsx)$ by pulling back the one on $\mathrm{Hom}(H_1(\gmsx),\mathbb C)$. It is worth noting that the $\mathbb C$-action on $\mathrm{FQuad}(\surf^{\lambda})$ can be induced on $\mathrm{FQuad}(\gmsx)$.

Let $\gmsx$ be the fixed GMSb, $\mathbb A^{\ast}$ be the fixed full formal closed arc system, and $\sg$ be the graded skew-gentle algebra arising from $\mathbb A^{\ast}$ on $\gmsx$, both of which are determined in \Cref{S-graph}. Recall that we denote by $\mathrm{Stab}(\mathcal D^b(\sg))$ the space of stability conditions on $\mathcal D^b(\sg)$. The \emph{principal component} of $\mathrm{Stab}(\mathcal D^b(\sg))$ corresponding to $\mathrm{EG}^{\circ}(\gmsx)$ as defined in Definition \ref{def:exchange graph}, is given by
\[
\mathrm{Stab}^{\circ}(\mathcal D^b(\sg)):=\mathbb C\cdot\bigcup_{\mathcal H\in\mathrm{EG}^{\circ}(\gmsx)}\mathrm{U}_{\vot}(\mathcal H).
\]


We now upgrade the isomorphism $X$ in \Cref{main} into an injection between complex manifolds. The image of this injecetion is the principal component $\mathrm{Stab}^{\circ}(\mathcal D^b(\sg))$ which turns out to be a generic-finite component corresponding to $\mathrm{EG}^{\circ}(\gmsx)$. The proof follows closely the arguments in \cite{BS,BMQS,CHQ}, with some modifications due to the presence of binary, which we will highlight.

It is known that there exists a one-to-one correspondence between the connected components of $\mathrm{FQuad}(\surf^{\lambda})$ and those of $\mathrm{EG}(\surf^{\lambda})$. Here, we can also identify a connected component $\mathrm{FQuad}^{\circ}(\gmsx)\subset\mathrm{FQuad}(\gmsx)$ corresponding to $\mathrm{EG}^{\circ}(\gmsx)$. The stratification of the moduli space $\mathrm{FQuad}(\surf^{\lambda})$ has already been given in \cite{CHQ}. The relation in Equation \eqref{relation-moduli} will induce a stratification on $\mathrm{FQuad}^{\circ}(\gmsx)$, given by
\[
B_0^{\circ}=B_1^{\circ}\subset B_2^{\circ}\subset\cdots\subset\mathrm{FQuad}^{\circ}(\gmsx)
\]
where the subspace
\[
B_p^{\circ}:=B_p^{\circ}(\gmsx)=\{q\in\mathrm{FQuad}^{\circ}(\gmsx)\,|\,r_q+2s_q\le p\}
\]
is defined by the number $s_q$ of saddle trajectories and $r_q$ of recurrent trajectories. We define $F_p^{\circ}=B_p^{\circ}\backslash B_{p-1}^{\circ}$.

As in \cite[Lemma 5.2]{CHQ}, we can show that this stratification satisfies the ascending chain property, i.e., there exists a positive integer $n$ such that $B_p^{\circ}=\mathrm{FQuad}^{\circ}(\gmsx)$ if $p\geq n$. Note that the set of saddle trajectories is linearly independent in $H_1(\gmsx)$ and, hence $s_q\le\mathrm{dim}H_1(\gmsx)$. Moreover, each recurrent trajectory is bounded by a ring domain or some (at least one) saddle trajectories. In addition, each saddle trajectory can appear in at most two boundaries of recurrent trajectories. Therefore, for any  $q\in\mathrm{FQuad}^{\circ}(\gmsx)$, we have $r_q+2s_q\le4\mathrm{dim}H_1(\gmsx)$.

Since there are only finitely many points in $\M$ and at most countably many compact arcs that are saddle trajectories, we can conclude that $\mathrm{FQuad}^{\circ}(\gmsx)=\mathbb C\cdot B_0^{\circ}(\gmsx)$.

Recall that there is the period map \eqref{period map} which defines a local homeomorphism. We can now state the following result, which is similar to \cite[Proposition 5.3]{CHQ}, and the proof relies on the surface arguments from \cite[\S 7.2]{BMQS}.

\begin{proposition}
\label{wall has ends}
For $p>2$, each component of the stratum $F_p^{\circ}$ contains a point $q$ and a neighbourhood $U\subset\mathrm{FQuad}^{\circ}(\gmsx)$ of $q$ such that $U\cap B_p^{\circ}$ is contained in the locus $\int_{\wa}\sqrt{q}\in\mathbb R$ for some $\wa\in H_1(\gmsx)$. Moreover, this containment is strict in the sense that $U\cap B_{p-1}^{\circ}$ is connnected.

As a result, we conclude that any path in $\mathrm{FQuad}^{\circ}(\gmsx)$ is homotopic relative to its endpoints to a path in $B_2^{\circ}$.
\end{proposition}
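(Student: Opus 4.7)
The plan is to adapt the surface-theoretic arguments of \cite[\S 7.2]{BMQS} and \cite[Proposition 5.3]{CHQ} to the GMSb setting, leveraging the cover-quotient relation $\mathrm{FQuad}(\gmsx) \cong \mathrm{FQuad}_\vot(\surf^\lambda)/\Dfang$ from \eqref{relation-moduli}. Since $\Dfang$ acts by isotopies that preserve both the horizontal foliation type and the period coordinates, I would first reduce the claim to a local assertion on the cover $\mathrm{FQuad}_\vot(\surf^\lambda)$ near a chosen preimage of $q$, where the existing arguments apply most directly, and then descend.

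Next I would pick $q \in F_p^\circ$ inside the prescribed connected component, represented by a flat surface $(C,D,\varphi)$ realizing exactly $s_q$ saddle trajectories and $r_q$ recurrent trajectories with $r_q + 2s_q = p$. Via the period map \eqref{period map}, a neighbourhood $U$ of $q$ becomes an open subset of $\mathrm{Hom}(H_1(\gmsx),\mathbb{C})$. Each saddle trajectory contributes a class in $H_1(\gmsx)$ whose period lies in $\mathbb{R}$, and each recurrent domain yields an analogous reality condition for the core class of its associated ring domain. Hence $U \cap B_p^\circ$ sits inside a finite union of real hyperplanes of the form $\{\int_\wa \sqrt{q'} \in \mathbb{R}\}$, and by choosing $q$ generic within its stratum I arrange that this union reduces to a single wall for the desired class $\wa$. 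For the strict inclusion and the connectedness of $U \cap B_{p-1}^\circ$, the hypothesis $p > 2$ is essential: in each of the three possible cases ($s_q \geq 2$; $s_q = 1$ with $r_q \geq 1$; $r_q \geq 3$) one saddle or recurrent trajectory can be perturbed away independently of the others, producing nearby points of $B_{p-1}^\circ$ on both sides of the wall; the resulting $U \cap B_{p-1}^\circ$ is then a real hyperplane with a subset of strictly higher real codimension removed, which remains connected. The final homotopy statement then follows from standard transversality: since for $p > 2$ each stratum is locally contained in a subset of real codimension at least two, any path in $\mathrm{FQuad}^\circ(\gmsx)$ can be perturbed relative to its endpoints to avoid $B_p^\circ$ and hence to lie in $B_2^\circ$.

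The main obstacle will be verifying the local picture at points where the flat geometry interacts with a binary, i.e.\ near exponential singularities of index $(1,1)$. Here one must check that trajectories approaching such a singularity produce the same types of walls as in the unbinary case, and that the $\Dfang$-quotient does not glue two distinct wall configurations in a way that could disconnect $U \cap B_{p-1}^\circ$. I would resolve this by lifting to the $\Dfang$-cover, applying the winding-number computation of \cite[Proposition 6.26]{IQ2} to normalize the local flat model at each binary, and descending: since $\Dfang$ acts freely on the relevant open subset of $\mathrm{FQuad}_\vot(\surf^\lambda)$, the local homeomorphism given by the period map survives the quotient, and the dimensional and connectedness arguments of \cite{BMQS,CHQ} transfer verbatim to each quotient chart.
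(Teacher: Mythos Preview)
Your approach is essentially the same as the paper's: the paper does not give an independent proof of this proposition but simply records that it is the analogue of \cite[Proposition 5.3]{CHQ} and that the proof ``relies on the surface arguments from \cite[\S 7.2]{BMQS}.'' Your plan to lift to the cover $\mathrm{FQuad}_\vot(\surf^\lambda)$ via \eqref{relation-moduli}, apply the existing wall-and-chamber analysis there, and descend through the free $\Dfang$-action is exactly the intended reduction, and you supply more detail than the paper does.

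One small imprecision worth tightening: your phrase ``each recurrent domain yields an analogous reality condition for the core class of its associated ring domain'' conflates spiral (recurrent) domains with ring domains. In the BMQS/CHQ argument the relevant real-period constraints come from the saddle connections bounding the spiral domains, not from a core curve; the case $s_q=0$, $r_q\geq 3$ does not actually arise because a recurrent component in this infinite-area setting is always bounded by at least one saddle trajectory (as the paper notes just before the proposition). Adjusting your case split accordingly, the rest of your outline goes through.
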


Now, we can state the following main theorem in this section.
\begin{theorem}
\label{main2}
There exists an injection $\iota$ between the moduli spaces
\[
\iota:\mathrm{FQuad}^{\circ}(\gmsx)\to\mathrm{Stab}(\mathcal D^b(\sg)),
\]
which is extended by the isomorphism $X$ in \Cref{main}. In particular, the image of $\iota$ is $\mathrm{Stab}^{\circ}(\mathcal D^b(\sg))$, which turns out to be the generic-finite component corresponding to $\mathrm{EG}^{\circ}(\gmsx)$.
\end{theorem}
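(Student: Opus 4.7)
The plan is to follow the strategy of \cite{BS,BMQS,CHQ} and construct $\iota$ in three stages: first on the saddle-free locus $B_0^{\circ}(\gmsx)$, then across single-saddle walls using \Cref{main} and \Cref{wall has ends}, and finally globally via the $\mathbb{C}$-action. For a saddle-free $q\in B_0^{\circ}(\gmsx)$, the horizontal strip decomposition yields one distinguished saddle connection $\CA_i^{q}$ per horizontal strip, and the collection $\mathbb{S}_q=\{\CA_i^{q}\}$ is a full formal closed arc system; with the grading inherited from the horizontal foliation it becomes an S-graph in $\mathrm{EG}^{\circ}(\gmsx)$, since each endpoint of a horizontal strip contributes a clockwise angle of index at least one. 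Via \Cref{main} this corresponds to a finite heart $\mathcal{H}_{\mathbb{S}_q}$ with simples $S_i^{q}=X_{\CA_i^{q}}$, and I define the central charge on simples by the periods $Z_q(S_i^{q})=\int_{\CA_i^{q}}\sqrt{q}\in\mathbb{H}$. Extending $\mathbb{Z}$-linearly along the isomorphism $K_0(\mathcal D^b(\sg))\cong H_1(\gmsx)$ of \Cref{K0 vs H1} produces a stability condition $\iota(q)=(\mathcal{H}_{\mathbb{S}_q},Z_q)\in\mathrm{U}_{\vot}(\mathcal{H}_{\mathbb{S}_q})$, with the Harder--Narasimhan and support properties following from finiteness of $\mathcal{H}_{\mathbb{S}_q}$.

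Continuity within each stratum is immediate because the period map \eqref{period map} and the central-charge coordinate on $\mathrm{U}_{\vot}(\mathcal{H}_{\mathbb{S}_q})$ are both holomorphic and intertwined by $K_0\cong H_1$. The crucial consistency check occurs at codimension-one walls in $B_1^{\circ}\setminus B_0^{\circ}$: crossing such a wall replaces a horizontal saddle connection by its smoothing, which is exactly the forward flip from \Cref{flip}; by \Cref{main} this matches a simple tilt $\mathcal{H}\to\mathcal{H}_S^{\sharp}$, and the simple-tilting formula used in the proof of \Cref{main} guarantees that the two prescriptions for the new simples agree, while the periods glue by continuity of $\int\sqrt{q}$. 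The $\Dfang$-quotient appearing in \Cref{moduli-quad} is absorbed automatically, since twin arcs yield identical arc objects under $X$, so the squared Dehn twists on binaries act trivially on both $\mathcal{H}_{\mathbb{S}_q}$ and $Z_q$.

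To extend globally, I will invoke \Cref{wall has ends}: every path in $\mathrm{FQuad}^{\circ}(\gmsx)$ is homotopic rel endpoints to a path in $B_2^{\circ}$, and the local analysis of codimension-one walls rules out monodromy around loops. Combined with $\mathrm{FQuad}^{\circ}(\gmsx)=\mathbb{C}\cdot B_0^{\circ}(\gmsx)$ and the evident $\mathbb{C}$-equivariance of the prescription, the locally defined $\iota$ extends to a continuous $\mathbb{C}$-equivariant map on the full moduli space. Because both the period map and the central-charge map are local biholomorphisms, $\iota$ is itself a local biholomorphism. For injectivity, a point of $\mathrm{Stab}^{\circ}$ is determined by its heart (hence its S-graph, via \Cref{main}) together with the central charges of its simples (hence the periods of the $\CA_i^{q}$), and both are recovered from $q$ by construction; for the image, $\iota(B_0^{\circ})$ sweeps out $\bigcup_{\mathcal{H}\in\mathrm{EG}^{\circ}(\gmsx)}\mathrm{U}_{\vot}(\mathcal{H})$, so $\mathrm{im}(\iota)=\mathrm{Stab}^{\circ}(\mathcal D^b(\sg))$, which therefore is the generic-finite component corresponding to $\mathrm{EG}^{\circ}(\gmsx)$.

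The hardest step I anticipate is the well-definedness at walls together with the no-monodromy statement underlying the global extension: one must show not only that each single-saddle wall crossing corresponds to a simple tilt (which \Cref{main} provides), but also that traversing any loop in $\mathrm{FQuad}^{\circ}(\gmsx)\cap B_2^{\circ}$ returns the same S-graph and the same central charge, with the $\Dfang$-ambiguity of \Cref{moduli-quad} precisely compensating the twin-arc ambiguity invisible to $X$. Once this patching is established, the remaining complex-analytic arguments follow the template of \cite{BS,BMQS,CHQ}.
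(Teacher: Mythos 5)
Your proposal follows essentially the same route as the paper's proof: define $\iota$ on the saddle-free locus $B_0^{\circ}(\gmsx)$ by sending a differential to the heart of its associated S-graph via \Cref{main}, with central charge the period map composed with the identification $\xi$ of \Cref{K0 vs H1}, and then globalize using the $\mathbb C$-action, the stratification, and \Cref{wall has ends}, deferring the analytic details to \cite{BS,BMQS,CHQ}; the identification of the image with $\mathrm{Stab}^{\circ}(\mathcal D^b(\sg))$ and the generic-finiteness are obtained the same way. Two caveats. First, where you globalize by ``$\mathbb C$-equivariance from $B_0^{\circ}$ plus homotoping paths into $B_2^{\circ}$'', the paper instead extends inductively stratum by stratum, $B_p^{\circ}\to B_{p+1}^{\circ}$, following \cite[Proposition 5.8]{BS} and \cite[\S 7.2]{BMQS}, with the walls-have-ends property of \Cref{wall has ends} invoked at each stage; the ``evident $\mathbb C$-equivariance of the prescription'' on $B_0^{\circ}$ is exactly the well-definedness you yourself flag as the hardest step, and the stratumwise induction is the mechanism that makes it (and continuity at higher strata) precise, so your variant is not a shortcut but a repackaging of the same argument. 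Second, your justification of compatibility with the quotient in \Cref{moduli-quad} is misstated: twin arcs $\CA$ and $\Dehn{\Vot}(\CA)$ do \emph{not} give identical arc objects under $X$ (they correspond to distinct objects, e.g.\ the two simples at a special vertex); what you actually need is that only the \emph{squared} Dehn twists are collapsed, which holds because $X$ in \Cref{thm:int formula} is defined on $\wCA(\gmsx)/\Dfang$ by construction -- the conclusion is correct, but not for the reason you give.
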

\begin{proof}
We will provide a sketch of the argument, which closely follows the work in \cite{BS,BMQS,CHQ}. The Grothendieck group can be identified with the homology group as $\xi:K(\mathcal D^b(\sg))\to H_1(\gmsx)$ by Lemma \ref{K0 vs H1}. Next, we define a map $\iota$ from the saddle-free part $B_0^{\circ}(\gmsx)$ of $\mathrm{FQuad}^{\circ}(\gmsx)$ to $\mathrm{Stab}(\mathcal D^b(\sg))$. Suppose we have an $\gmsx$-framed quadratic differential $(C,D,\varphi,f)$ in $B_0^{\circ}(\gmsx)$ and an associated S-graph $\mathbb S$ determined by all saddle connections. We then map it to the stability condition $\sigma$ determined by the heart $ \mathcal{H}_{\mathbb{S}}=\langle X_{\CA}|\,\CA\in\CAS\rangle$ with central charge given by the period map in \Cref{period map}, i.e. $Z=\int\circ\xi$. We then extend the map $\iota$ from $B_0^{\circ}$ to $B_2^{\circ}$ continuously by compatible $\mathbb C$-actions on both sides. Finally, we extend the map $\iota$ from $B_p^{\circ}$ to $B_{p+1}^{\circ}$ inductively, using the same argument as \cite[Proposition 5.8]{BS}, \cite[\S 7.2]{BMQS}, and the walls-have-ends property on any connected component of $F_p^{\circ}$, where Proposition \ref{wall has ends} is essential. The ascending chain property of this stratification ensures that the inductive process terminates after a finite number of steps.

By the construction above, we observe that the image of $\iota$ coincides precisely with the principal component $\mathrm{Stab}^{\circ}(\mathcal D^b(\sg))$. Furtherover, since $\iota$ is both open and closed, $\mathrm{Stab}^{\circ}(\mathcal D^b(\sg))$ becomes a generic-finite component. Therefore, we obtain an injection between the moduli spaces $\mathrm{FQuad}^{\circ}(\gmsx)$ and $\mathrm{Stab}(\mathcal D^b(\sg))$, whose image is the generic-finite component $\mathrm{Stab}^{\circ}(\mathcal D^b(\sg))$ corresponding to $\mathrm{EG}^{\circ}(\gmsx)$.
\end{proof}


\clearpage

\appendix

\section{Silting arc systems and flips}\label{appendix-1}

Dually to the S-graphs and their flips, we can define silting arc systems (or called mixed-angulations, cf. \cite{CHQ}) and their flips, which generalizes the gentle case (cf. \cite{CS}).

Let $\mathbb{T}=\{\wg_1,...,\wg_n\}$ be a full formal open arc system on $\gmsx$. We call $\mathbb{T}$ a \emph{silting arc system (SAS)} if all of the intersection indices of $\wg_1,...,\wg_n$ are less than 1.

For a collection $\mathbb{T}=\{\wg_1,...,\wg_n\}$ of admissible open arcs on $\gmsx$, let $X_{\mathbb{T}}:=\oplus_{i=1}^nX_{\wg_i}$, where $X_{\wg_i}$ is the indecomposable object in ${\rm per}\,\sg$ corresponding to the open arc $\wg_i$ by the duality of \Cref{thm:int formula} (cf. \cite{QZZ}). In fact, a collection $\mathbb{T}$ of admissible open arcs is a SAS if and only if $X_{\mathbb{T}}$
is a silting object in ${\rm per}\,\sg$. Moreover, if $X$ is a basic silting arc object, then $X$ is isomorphic to $\oplus_{i=1}^nX_{\wg_i}$, where $\mathbb{T}=\{\wg_1,...,\wg_n\}$ is a SAS on $\gmsx$.

Now we give the rule of flips of SASs. Let $\mathbb{T}$ be a SAS and $\wg$ be a graded admissible open arc in $\mathbb{T}$. Let $M_1$ and $M_2$ be the endpoints of $\wg$, which may coincide. Consider the next arcs of $\wg$ anti-clockwisely in the $\mathbb{T}$-polygons satisfying that the intersection indices from $\wg$ to the next arcs at $M_1$ or $M_2$ are 0. We denote the next arcs by $\wg_1$ and $\wg_2$ respectively if they exist.

\begin{itemize}
    \item The usual case: we define $\wg^{\sharp}$ by moving $M_i$ along $\wg_i$ ($i=1,2$) simultaneously and the grading of $\wg^{\sharp}$ satisfies that the intersection indices from $\wg_i$ to $\wg^{\sharp}$ at $M_i$ are 0.
    \item Special case 1: we define $\wg^{\sharp}=\wg[1]$ if neither of $\wg_1$ and $\wg_2$ exists.
    \item Special case 2: $\wg_i$ has a twin $\wg_i'$ in $\mathbb{T}$ with the same degree for some $i=1,2$. That is, ${\rm ind}_{M_i}(\wg,\wg_i)=0$ and ${\rm ind}_{M_i}(\wg,\wg_i')=0$. Then we define $\widetilde{\Gamma}_i$ as the smoothing out of $\wg_i$ and $\wg'_i$ at $M'_i$ (which inherits the grading from one of $\wg_i$ and $\wg_i'$) and replace $\wg_i$ by $\widetilde{\Gamma}_i$ instead.
\end{itemize}

We define $\mu^{\sharp}_{\wg}(\wg)=\wg^{\sharp}$ and $\mu^{\sharp}_{\wg}(\alpha)=\alpha$ for any $\alpha\in\mathbb{T}\backslash\{\wg\}$. Moreover, define $\mu^{\sharp}_{\wg}(\mathbb{T}):=\{\mu^{\sharp}_{\wg}(\alpha)|\,\alpha\in\mathbb{T}\}$, called the \emph{forward flip} of $\mathbb{T}$ via $\wg$. Dually, we can define the \emph{backward flip} $\mu^{\flat}_{\wg}(\mathbb{T})$ of a SAS $\mathbb{T}$ via $\wg$. The properties of flips of SASs are dual to those of S-graphs.

\begin{figure}[h]\centering
	\def\dexc{black!10!blue!50!green}
	\begin{tikzpicture}[scale=0.6]
	\draw[very thick,blue] (-2,2)--(-2,-2)--(2,-2)--(2,2)--cycle;
	\draw[very thick,cyan] (-2,2)--(2,-2);
	\draw[very thick,blue] (-2,-2)--(2,2);
	\draw[blue] (-2,2)\nn;
	\draw[blue] (-2,-2)\nn;
	\draw[blue] (2,2)\nn;
	\draw[blue] (2,-2)\nn;
    \draw (-2,2)node[above]{$M_2'$};
    \draw (-2,0)node[left]{$\t{\alpha}_2$};
    \draw (-2,-2)node[below]{$M_1$};
    \draw (0,-2)node[below]{$\t{\gamma}_1$};
    \draw (2,-2)node[below]{$M_1'$};
    \draw (-0.4,1)node{$\t{\gamma}^{\sharp}$};
    \draw (2,2)node[above]{$M_2$};
    \draw (2,0)node[right]{$\t{\alpha}_1$};
    \draw (0.5,1)node{$\t{\gamma}$};
    \draw (0,2)node[above]{$\t{\gamma}_2$};
    
    \begin{scope}[shift={(7,0)}]
	\draw[very thick,cyan] (0,2)--(0,-2);
	\draw[blue] (0,2)\nn;
	\draw[blue] (0,-2)\nn;
    \draw (0,2)node[above]{$M_2'=M_2$};
    \draw (0,-2)node[below]{$M_1'=M_1$};
    \draw (0,0)node[right]{$\t{\gamma}^{\sharp}=\t{\gamma}[1]$};
    \end{scope}
    
    \begin{scope}[shift={(14,5)}, scale=0.5]
    \draw[ultra thick]plot [smooth,tension=1] coordinates {(3,-4.5) (-90:5.2) (-3,-4.5)};
	\draw[ultra thick,fill=gray!10] (-90:11) ellipse (1.5);
	\draw (-90:11) node {$\Vot$};
	\draw[very thick,blue] (-90:5.2) .. controls +(-45:6) and +(-30:5) .. (-90:12.5);
	\draw[very thick,blue] (-90:5.2) .. controls +(-135:6) and +(-150:5) .. (-90:12.5);
	\draw[very thick,blue] (-90:5.2) .. controls +(-15:3) and +(130:3) .. (-60:11);
	\draw[very thick,cyan] (-60:11) .. controls +(-110:13) and +(-140:13) ..	(-90:5.2);
	\draw[blue] (-90:12.5)\nn;
	\draw[blue] (-90:5.2)\nn;
	\draw[blue] (-60:11)\nn;
    \draw (-90:9.5)\DEC;
    \draw (3.9,-11) node {$\wg_1$} ;
    \draw (-2.4,-11) node {$\wg_1'$} ;
    \draw (4,-6.8) node {$\wg$} ;
    \draw (3,-15) node {$\wg^{\sharp}$};
    \end{scope}
    
    \end{tikzpicture}
    \caption{Forward flips of SASs}\label{fig:flip2}
\end{figure}

\clearpage

\section{Classification of the smoothing out}\label{appendix-2}

In order to finish the induction in \Cref{classify}, we need to show that $T^{\sharp}$ is an arc object. But we have shown in \Cref{classify} that the homology class $[(\CA_T)^{\sharp}_{\CA_S}]$ corresponds to the class $[T^{\sharp}]$ in the Grothendieck group. Now we only need to show that the smoothing out $(\CA_T)^{\sharp}_{\CA_S}$ is an unknotted arc using case by case study.

We will list all the cases of $\CA_S$ and $\CA_T$. The cases where $\CA_S$ or $\CA_T$ has a twin in the S-graph $\mathbb{S}$ are called binary cases, while the other cases are called binary-free cases.

\textbf{Binary-free cases} (cf. \Cref{fig:001}, where the cyan arc in each figure is $(\CA_T)_{\CA_S}^{\sharp}$):

There are three cases, depending on the number of endpoints of $\CA_S$ and $\CA_T$.

\begin{itemize}
    \item[\textbf{F1:}] the number of endpoints of $\CA_S$ and $\CA_T$ is 3. In this case, $|\partial\CA_S|=|\partial\CA_T|=2$ and they share exactly one common endpoint, cf. Figure 1 of \Cref{fig:001}.
    \item[\textbf{F2:}] the number of endpoints of $\CA_S$ and $\CA_T$ is 2. There are three subcases, depending on $|\partial\CA_S|$ and $|\partial\CA_T|$.
    \begin{itemize}
        \item[\textbf{F2.1:}] $|\partial\CA_S|=|\partial\CA_T|=2$. In this subcase, $\CA_T$ and $\CA_S$ share two endpoints. There are three subcases, depending on the oriented intersections from $\CA_T$ to $\CA_S$.
        \begin{itemize}
            \item[\textbf{F2.1.1:}] there is only one oriented intersection from $\CA_T$ to $\CA_S$, cf. Figure 2.1.1 of \Cref{fig:001}, where $d\geq 1$.
            \item[\textbf{F2.1.2:}] there are two oriented intersections from $\CA_T$ to $\CA_S$, only one of which is of index 1, cf. Figure 2.1.2 of \Cref{fig:001}, where $d\geq 2$.
            \item[\textbf{F2.1.3:}]there are two oriented intersections from $\CA_T$ to $\CA_S$ of index 1, cf. Figure 2.1.3 of \Cref{fig:001}.
        \end{itemize}
        \item[\textbf{F2.2:}] $|\partial\CA_T|=1$ and $|\partial\CA_S|=2$. In this subcase, $\CA_T$ and $\CA_S$ share exactly one common endpoint. There are two subcases, depending on the order of the three arc segments at the common endpoint.
        \begin{itemize}
            \item[\textbf{F2.2.1:}] the order of the three arc segments at the common endpoint is $\CA_T,\CA_S,\CA_T$ in clockwise, cf. Figure 2.2.1 of \Cref{fig:001}, where $d\geq 1$.
            \item[\textbf{F2.2.2:}] the order of the three arc segments at the common endpoint is $\CA_T,\CA_T,\CA_S$ in clockwise, cf. Figure 2.2.2 of \Cref{fig:001}, where $d\geq 2$.
        \end{itemize}
        \item[\textbf{F2.3:}] $|\partial\CA_T|=2$ and $|\partial\CA_S|=1$. In this subcase, $\CA_T$ and $\CA_S$ share exactly one common endpoint. There are two subcases, depending on the order of the three arc segments at the common endpoint.
        \begin{itemize}
            \item[\textbf{F2.3.1:}] the order of the three arc segments at the common endpoint is $\CA_S,\CA_T,\CA_S$ in clockwise, cf. Figure 2.3.1 of \Cref{fig:001}, where $d\geq 1$.
            \item[\textbf{F2.3.2:}] the order of the three arc segments at the common endpoint is $\CA_T,\CA_S,\CA_S$ in clockwise, cf. Figure 2.3.2 of \Cref{fig:001}, where $d\geq 2$.
        \end{itemize}
    \end{itemize}
    \item[\textbf{F3:}] the number of endpoints of $\CA_S$ and $\CA_T$ is 1. In this case, all endpoints of $\CA_T$ and $\CA_S$ coincide. There are five subcases, depending on the order of the four arc segments at the common endpoint.
    \begin{itemize}
        \item[\textbf{F3.1:}] the order of the four arc segments at the common endpoint is $\CA_T,\CA_T,\CA_S,\CA_S$ in clockwise, cf. Figure 3.1 of \Cref{fig:001}, where $c,d\geq 2$.
        \item[\textbf{F3.2:}] the order of the four arc segments at the common endpoint is $\CA_T,\CA_S,\CA_S,\CA_T$ in clockwise, cf. Figure 3.2 of \Cref{fig:001}, where $c,d\geq 2$.
        \item[\textbf{F3.3:}] the order of the four arc segments at the common endpoint is $\CA_S,\CA_T,\CA_T,\CA_S$ in clockwise, cf. Figure 3.3 of \Cref{fig:001}, where $c,d\geq 2$.
        \item[\textbf{F3.4:}] the order of the four arc segments at the common endpoint is $\CA_T,\CA_S,\CA_T,\CA_S$ clockwise, cf. Figure 3.4 of \Cref{fig:001}, where $c,c',d\geq 1$. But there are three subcases, depending on whether $c=1$ and whether $c'=1$.
        \begin{itemize}
            \item[\textbf{F3.4.1:}] $c=1$ but $c'\neq 1$, cf. Figure 3.4.1 of \Cref{fig:001}.
            \item[\textbf{F3.4.2:}] $c'=1$ but $c\neq 1$, cf. Figure 3.4.2 of \Cref{fig:001}.
            \item[\textbf{F3.4.3:}] $c=c'=1$, cf. Figure 3.4.3 of \Cref{fig:001}.
        \end{itemize}
        \item[\textbf{F3.5:}] the order of the four arc segments at the common endpoint is $\CA_S,\CA_T,\CA_S,\CA_T$ clockwise, cf. Figure 3.5 of \Cref{fig:001}, where $c,d\geq 1$.
    \end{itemize}
\end{itemize}

\textbf{Binary cases} (cf. \Cref{fig:2}, where the cyan arc in each figure is $(\CA_T)_{\CA_S}^{\sharp}$):

Since $T\neq S$ and $\mathrm{Ext}^1(T,S)\neq 0$, $\CA_T$ and $\CA_S$ are not twins by \Cref{thm:int formula}. Moreover, by the simple tilting formula, any two of $\CA_T$, $\CA_S$ and $\CA_{T^{\sharp}}$ are not twins.

There are two binary cases, depending on whether $\CA_S$ and $\CA_T$ has a twin in $\CAS$. 

\begin{itemize}
    \item[\textbf{B1:}] Exactly one of $\CA_S$ and $\CA_T$ has a twin in $\CAS$. Now we only classify the subcases where the twin $\CA'_S$ of $\CA_S$ is in $\CAS$ but $\CA_T$ has no twin in $\CAS$ and omit the dual subcases. We denote the endpoint of $\CA_S$ on the binary by $P$ and the other endpoint by $M$.
    
    Note that for some subcase below, it has a twin subcase, in which all is the same as the original subcase except the locations of $\CA_S$ and $\CA_S'$. One subcase can become its twin subcase by taking ${\rm D}^{-2}_{\Vot}$-action around some binary $\Vot$. Such twin subcases will be omitted.
    \begin{itemize}
        \item[\textbf{B1.1:}] the number of endpoints of $\CA_S$ and $\CA_T$ is 3. In this subcase, $|\partial\CA_T|=2$ and there is only one common endpoint of $\CA_S$ and $\CA_T$. There are two subsubcases, depending on that the common endpoint of $\CA_S$ and $\CA_T$.
        \begin{itemize}
            \item[\textbf{B1.1.1:}] the common endpoint of $\CA_S$ and $\CA_T$ is $M$, cf. Figure 1.1.1 of \Cref{fig:2}.
            \item[\textbf{B1.1.2:}] the common endpoint of $\CA_S$ and $\CA_T$ is $P$, cf. Figure 1.1.2 of \Cref{fig:2}.
        \end{itemize}
        \item[\textbf{B1.2:}] the number of endpoints of $\CA_S$ and $\CA_T$ is 2. There are two subsubcases, depending on $|\partial\CA_T|$.
        \begin{itemize}
            \item[\textbf{B1.2.1:}] $|\partial\CA_T|=2$. In this subsubcase, $\CA_T$ and $\CA_S$ share two common endpoints $M$ and $P$. There are three orders of arc segments of $\CA_T$, $\CA_S$ and $\CA_S'$ at $M$.
            \begin{itemize}
                \item[\textbf{B1.2.1.1:}] the order of the three arc segments at $M$ is $\CA_T, \CA_S, \CA'_S$ clockwise, cf. Figure 1.2.1.1 of \Cref{fig:2}, where $d\geq 2$.
                \item[\textbf{B1.2.1.2:}] the order of the three arc segments at $M$ is $\CA_T, \CA_S', \CA_S$ clockwise. Then there are two oriented intersections from $\CA_T$ to $\CA_S$. But since the angle between $\CA_S'$ and $\CA_S$ is cut out by $\CA_T$, $\CA_S'$ and $\CA_S$ are not identical twins by $1^{\circ}$ of Lemma \ref{lemma:index} and \Cref{thm:int formula}. And thus the oriented intersection from $\CA_T$ to $\CA_S$ at $M$ can not have index 1, cf. Figure 1.2.1.2 of \Cref{fig:2}, where $d\geq 2$.
                \item[\textbf{B1.2.1.3:}] the order of the three arc segments at $M$ is $\CA_S', \CA_T, \CA_S$ clockwise, cf. Figure 1.2.1.3 of \Cref{fig:2}, where $d\geq 2$.
            \end{itemize}
            \item[\textbf{B1.2.2:}] $|\partial\CA_T|=1$. In this subsubcase, $\CA_S$ and the two segments of $\CA_T$ share a common endpoint. Then the common endpoint may be $M$ or $P$, and there are three orders of arc segments at the common endpoint.
            \begin{itemize}
                \item[\textbf{B1.2.2.1:}] the common endpoint of $\CA_S$ and $\CA_T$ is $M$, and the order of the four arc segments at $M$ is $\CA_T,\CA_S,\CA'_S,\CA_T$ (or $\CA_T,\CA'_S,\CA_S,\CA_T$) clockwise, cf. Figure 1.2.2.1 of \Cref{fig:2}, where $d\geq 1$.
                \item[\textbf{B1.2.2.2:}] the common endpoint of $\CA_S$ and $\CA_T$ is $M$, and the order of the four arc segments at $M$ is $\CA_T,\CA_T,\CA_S,\CA'_S$ (or $\CA_T,\CA_T,\CA'_S,\CA_S$) clockwise, cf. Figure 1.2.2.2 of \Cref{fig:2}, where $d\geq 2$.
                \item[\textbf{B1.2.2.3:}] the common endpoint of $\CA_S$ and $\CA_T$ is $P$, cf. Figure 1.2.2.3 of \Cref{fig:2}, where $d\geq 2$.
            \end{itemize}
        \end{itemize}
    \end{itemize}
    \item[\textbf{B2:}] the twin $\CA_S'$ of $\CA_S$ and the twin $\CA_T'$ of $\CA_T$ are both in $\CAS$, cf. \Cref{fig:2}. In this case, there is only one common endpoint of the four arcs $\CA_S$, $\CA_S'$, $\CA_T$ and $\CA_T'$. We denote the common endpoint by $M$. There are three subcases, depending on whether $M$ is on a binary and the order of the four arc segments at $M$.
    \begin{itemize}
        \item[\textbf{B2.1:}] the common endpoint $M$ is not on a binary, cf. Figure 2.1 of \Cref{fig:2}. In this subcase, there are four possible order of the four arc segments at $M$, but they can become each other by ${\rm D}^2_{\Vot}$-action.
        \item[\textbf{B2.2:}] the common endpoint $M$ is on a binary, and the order of the four arc segments at $M$ is $\CA_T,\CA_S,\CA_S',\CA_T'$ (or $\CA_T,\CA_S',\CA_S,\CA_T'$), cf. Figure 2.2 of \Cref{fig:2}.
        \item[\textbf{B2.3:}] the common endpoint $M$ is on a binary, and the order of the four arc segments at $M$ is $\CA_S',\CA_T',\CA_T,\CA_S$ (or $\CA_S',\CA_T,\CA_T',\CA_S$), cf. Figure 2.3 of \Cref{fig:2}.
    \end{itemize}
\end{itemize}

In each case above, we see that the smoothing out $(\CA_T)_{\CA_S}^{\sharp}$ is an unknotted arc, and thus $T^{\sharp}$ is an arc object.

\begin{figure}[htbp]
	\begin{tikzpicture}[scale=.3]
	\draw[ultra thick]plot [smooth,tension=1] coordinates {(3,-4.5) (-90:5.2) (-3,-4.5)};
	\draw[red] (-90:5.2) .. controls +(-120:3) and +(30:3) .. (-5,-10.5);
	\draw[red] (-90:5.2) .. controls +(-60:3) and +(150:3) .. (5,-10.5);
	\draw[ultra thick]plot [smooth,tension=1] coordinates {(5.4,-12) (5,-10.5) (5.4,-9)};
	\draw[ultra thick]plot [smooth,tension=1] coordinates {(-5.4,-12) (-5,-10.5) (-5.4,-9)};
    \draw[cyan, ultra thick] (-5,-10.5) .. controls +(20:3) and +(160:3) .. (5,-10.5);
    \draw (-90:5.2)\DEC;
    \draw (5,-10.5)\DEC;
    \draw (-5,-10.5)\DEC;
	\draw (-90:3) node{Figure 1};
    \draw (-90:6.5) node {1} ;
    \draw (3.2,-8) node {$\CA_T$} ;
    \draw (-3.2,-8) node {$\CA_S$} ;

    \begin{scope}[shift={(13,0)}]
	\draw[ultra thick]plot [smooth,tension=1] coordinates {(3,-4.5) (-90:5.2) (-3,-4.5)};
	\draw[ultra thick]plot [smooth,tension=1] coordinates {(3,-13.7) (-90:13) (-3,-13.7)};
	\draw[red] (-90:5.2) .. controls +(-45:5) and +(45:5) .. (-90:13);
	\draw[red] (-90:5.2) .. controls +(-135:5) and +(135:5) .. (-90:13);
    \draw[cyan, ultra thick] (0,-13)
        .. controls +(60:3) and +(0:1) .. (0,-8)
        .. controls +(180:1) and +(120:3) .. (0,-13);
    \draw (-90:13)\DEC;
	\draw (-90:5.2)\DEC;
    \draw (-90:3) node{Figure 2.1.1};
    \draw (-90:11.5) node {$d$} ;
    \draw (-90:6.75) node {1} ;
    \draw (3.2,-11) node {$\CA_T$} ;
    \draw (-3.2,-11) node {$\CA_S$} ;
    \end{scope}

	\begin{scope}[shift={(26,0)}]
	\draw[ultra thick]plot [smooth,tension=1] coordinates {(3,-4.5) (-90:5.2) (-3,-4.5)};
	\draw[ultra thick,fill=gray!10] (-90:11) ellipse (1.5);
	\draw[red] (-90:5.2) .. controls +(-45:6) and +(-30:5) .. (-90:12.5);
	\draw[red] (-90:5.2) .. controls +(-135:6) and +(-150:5) .. (-90:12.5);
    \draw[cyan, ultra thick] (0,-12.5)
        .. controls +(-10:3) and +(0:3) .. (0,-8)
        .. controls +(180:3) and +(-170:3) .. (0,-12.5);
    \draw (-90:12.5)\DEC;
	\draw (-90:5.2)\DEC;
    \draw[blue] (-90:9.5)\nn;
    \draw (-90:3) node{Figure 2.1.2};
    \draw (-90:13.8) node {$d$} ;
    \draw (-90:6.25) node {1} ;
    \draw (4,-11) node {$\CA_T$} ;
    \draw (-4,-11) node {$\CA_S$} ;
    \end{scope}

    \begin{scope}[shift={(39,0)}]
	\draw[ultra thick]plot [smooth,tension=1] coordinates {(3,-4.5) (-90:5.2) (-3,-4.5)};
	\draw[ultra thick,fill=gray!10] (-90:11) ellipse (1.5);
	\draw[red] (-90:5.2) .. controls +(-45:6) and +(-30:5) .. (-90:12.5);
	\draw[red] (-90:5.2) .. controls +(-135:6) and +(-150:5) .. (-90:12.5);
    \draw[cyan, ultra thick] (-90:5.2)
        .. controls +(-145:9) and +(160:6) .. (0,-15)
        .. controls +(-20:4) and +(0:4) .. (0,-8)
        .. controls +(-170:3) and +(-170:3) .. (0,-12.5);
    \draw (-90:12.5)\DEC;
	\draw (-90:5.2)\DEC;
    \draw[blue] (-90:9.5)\nn;
    \draw (-90:3) node{Figure 2.1.3};
    \draw (-90:13.5) node {1} ;
    \draw (-90:6.25) node {1} ;
    \draw (4,-11) node {$\CA_T$} ;
    \draw (-4,-11) node {$\CA_S$} ;
    \end{scope}

    \begin{scope}[shift={(0,-13.5)}]
	\draw[ultra thick]plot [smooth,tension=1] coordinates {(3,-4.5) (-90:5.2) (-3,-4.5)};
	\draw[ultra thick,fill=gray!10] (-90:11) ellipse (1.5);
	\draw[red] (-90:5.2) .. controls +(-45:4) and +(90:1) .. (3.3,-11.5)
	.. controls +(-90:2) and +(0:1) .. (-90:14)
	.. controls +(180:1) and +(-90:2) .. (-3.3,-11.5)
	.. controls +(90:1) and +(-135:4) .. (-90:5.2);
	\draw[red] (-90:5.2) -- (-90:9.5);
	\draw[cyan, ultra thick] (-90:5.2)
        .. controls +(-130:4) and +(180:3) .. (0,-13.5)
        .. controls +(0:3) and +(60:4) .. (0,-9.5);
    \draw[blue] (-90:12.5)\nn;
	\draw (-90:5.2)\DEC;
    \draw (-90:9.5)\DEC;
    \draw (-90:3) node{Figure 2.2.1};
    \draw (-94:6.75) node {$d$} ;
    \draw (-86:6.75) node {1} ;
    \draw (4,-11) node {$\CA_T$} ;
    \draw (-0.7,-8.5) node {$\CA_S$} ;
    \end{scope}

    \begin{scope}[shift={(13,-13.5)}]
	\draw[ultra thick]plot [smooth,tension=1] coordinates {(3,-4.5) (-90:5.2) (-3,-4.5)};
	\begin{scope}[rotate around = {35:(-90:5.2)}]
	\draw[red] (-90:5.2) .. controls +(-60:4) and +(90:1) .. (3,-11.5)
	.. controls +(-90:2) and +(0:1) .. (-90:14)
	.. controls +(180:1) and +(-90:2) .. (-3,-11.5)
	.. controls +(90:1) and +(-120:4) .. (-90:5.2);
	\end{scope}
	\draw[cyan, ultra thick] (-5,-10.5)
        .. controls +(0:4) and +(180:4) .. (3,-14)
        .. controls +(0:6) and +(-10:9) .. (0,-5.2);
	\draw[rotate around = {35:(-90:5.2)}] (-90:7) node {$d$} ;
	\draw[red] (-90:5.2) .. controls +(-120:3) and +(30:3) .. (-5,-10.5);
	\draw[ultra thick]plot [smooth,tension=1] coordinates {(-5.4,-12) (-5,-10.5) (-5.4,-9)};
    \draw (-5,-10.5)\DEC;
	\draw (-90:5.2)\DEC;
    \draw (-90:3) node{Figure 2.2.2};
    \draw (-0.3,-7) node {1} ;
    \draw (4,-12) node {$\CA_T$} ;
    \draw (-3.5,-8.3) node {$\CA_S$} ;
    \end{scope}

    \begin{scope}[shift={(26,-13.5)}]
	\draw[ultra thick]plot [smooth,tension=1] coordinates {(3,-4.5) (-90:5.2) (-3,-4.5)};
	\draw[ultra thick,fill=gray!10] (-90:11) ellipse (1.5);
	\draw[red] (-90:5.2) .. controls +(-45:4) and +(90:1) .. (3.3,-11.5)
	.. controls +(-90:2) and +(0:1) .. (-90:14)
	.. controls +(180:1) and +(-90:2) .. (-3.3,-11.5)
	.. controls +(90:1) and +(-135:4) .. (-90:5.2);
	\draw[cyan, ultra thick] (-90:5.2)
        .. controls +(-50:4) and +(0:3) .. (0,-13.5)
        .. controls +(180:3) and +(120:4) .. (0,-9.5);
	\draw[red] (-90:5.2) -- (-90:9.5);
    \draw[blue] (-90:12.5)\nn;
	\draw (-90:5.2)\DEC;
    \draw (-90:9.5)\DEC;
    \draw (-90:3) node{Figure 2.3.1};
    \draw (-95:6.75) node {1} ;
    \draw (-87:6.75) node {$d$} ;
    \draw (4,-11) node {$\CA_S$} ;
    \draw (0.8,-8.5) node {$\CA_T$} ;
    \end{scope}

    \begin{scope}[shift={(39,-13.5)}]
	\draw[ultra thick]plot [smooth,tension=1] coordinates {(3,-4.5) (-90:5.2) (-3,-4.5)};
	\begin{scope}[rotate around = {-35:(-90:5.2)}]
	\draw[red] (-90:5.2) .. controls +(-60:4) and +(90:1) .. (3,-11.5)
	.. controls +(-90:2) and +(0:1) .. (-90:14)
	.. controls +(180:1) and +(-90:2) .. (-3,-11.5)
	.. controls +(90:1) and +(-120:4) .. (-90:5.2);
	\end{scope}
	\draw[cyan, ultra thick] (5,-10.5)
        .. controls +(180:4) and +(0:4) .. (-3,-14)
        .. controls +(180:6) and +(-170:9) .. (0,-5.2);
	\draw[rotate around = {-35:(-90:5.2)}] (-90:7) node {$d$} ;
	\draw[ultra thick]plot [smooth,tension=1] coordinates {(5.4,-12) (5,-10.5) (5.4,-9)};
	\draw[red] (-90:5.2) .. controls +(-60:3) and +(150:3) .. (5,-10.5);
    \draw (5,-10.5)\DEC;
	\draw (-90:5.2)\DEC;
    \draw (-90:3) node{Figure 2.3.2};
    \draw (-87:7) node {1} ;
    \draw (-4,-12) node {$\CA_S$} ;
    \draw (3.5,-8) node {$\CA_T$} ;
    \end{scope}
	
	\begin{scope}[shift={(10,-27)}]
	\draw[ultra thick]plot [smooth,tension=1] coordinates {(3,-4.5) (-90:5.2) (-3,-4.5)};
	\begin{scope}[rotate around = {35:(-90:5.2)}]
	\draw[red] (-90:5.2) .. controls +(-60:4) and +(90:1) .. (3,-11.5)
	.. controls +(-90:2) and +(0:1) .. (-90:14)
	.. controls +(180:1) and +(-90:2) .. (-3,-11.5)
	.. controls +(90:1) and +(-120:4) .. (-90:5.2);
	\end{scope}
	\draw[rotate around = {35:(-90:5.2)}] (-90:7) node {$c$} ;
	\begin{scope}[rotate around = {-35:(-90:5.2)}]
	\draw[red] (-90:5.2) .. controls +(-60:4) and +(90:1) .. (3,-11.5)
	.. controls +(-90:2) and +(0:1) .. (-90:14)
	.. controls +(180:1) and +(-90:2) .. (-3,-11.5)
	.. controls +(90:1) and +(-120:4) .. (-90:5.2);
	\end{scope}
	\draw[rotate around = {-35:(-90:5.2)}] (-90:7) node {$d$} ;
	\draw[cyan, ultra thick] (0,-5.2)
        .. controls +(-170:9) and +(180:6) .. (-3,-14)
        .. controls +(0:1) and +(180:1) .. (3,-14)
        .. controls +(0:6) and +(-10:9) .. (0,-5.2);
	\draw (0,-9) node {1} ;
	\draw (-90:5.2)\DEC;
    \draw (-90:3) node{Figure 3.1};
    \draw (-4,-12) node {$\CA_S$} ;
    \draw (4,-12) node {$\CA_T$} ;
    \end{scope}

    \begin{scope}[shift={(23,-27)}]
    \draw[ultra thick]plot [smooth,tension=1] coordinates {(3,-4.5) (-90:5.2) (-3,-4.5)};
	\draw[red] (-90:5.2) .. controls +(-30:4) and +(90:1) .. (3.5,-11.5)
	.. controls +(-90:2) and +(0:1) .. (-90:14)
	.. controls +(180:1) and +(-90:2) .. (-3.5,-11.5)
	.. controls +(90:1) and +(-150:4) .. (-90:5.2);
	\begin{scope}[rotate around = {15:(-90:5.2)}]
	 \draw[red] (-90:5.2) .. controls +(-80:1) and +(90:1) .. (1,-10.5)
	.. controls +(-90:1) and +(0:0.5) .. (-90:12)
	.. controls +(180:0.5) and +(-90:1) .. (-1,-10.5)
	.. controls +(90:1) and +(-100:1) .. (-90:5.2);
	\draw (0,-7.5) node {$c$} ;
	\draw (0,-12.8) node {$\CA_S$} ;
	\end{scope}
	\begin{scope}[rotate around = {-20:(-90:5.2)}]
	\draw[cyan, ultra thick] (-90:5.2) .. controls +(-80:1) and +(90:1) .. (1,-10.5)
	.. controls +(-90:1) and +(0:0.5) .. (-90:12)
	.. controls +(180:0.5) and +(-90:1) .. (-1,-10.5)
	.. controls +(90:1) and +(-100:1) .. (-90:5.2);
	\end{scope}
	\draw (1.5,-7) node {1} ;
	\draw (-1,-7.5) node {$d$} ;
	\draw (-90:5.2)\DEC;
    \draw (-90:3) node{Figure 3.2};
    \draw (4,-13) node {$\CA_T$} ;
    \end{scope}

    \begin{scope}[shift={(36,-27)}]
    \draw[ultra thick]plot [smooth,tension=1] coordinates {(3,-4.5) (-90:5.2) (-3,-4.5)};
	\draw[red] (-90:5.2) .. controls +(-30:4) and +(90:1) .. (3.5,-11.5)
	.. controls +(-90:2) and +(0:1) .. (-90:14)
	.. controls +(180:1) and +(-90:2) .. (-3.5,-11.5)
	.. controls +(90:1) and +(-150:4) .. (-90:5.2);
	\begin{scope}[rotate around = {-15:(-90:5.2)}]
	\draw[red] (-90:5.2) .. controls +(-80:1) and +(90:1) .. (1,-10.5)
	.. controls +(-90:1) and +(0:0.5) .. (-90:12)
	.. controls +(180:0.5) and +(-90:1) .. (-1,-10.5)
	.. controls +(90:1) and +(-100:1) .. (-90:5.2);
	\draw (0,-7.5) node {$d$} ;
	\draw (0,-12.7) node {$\CA_T$} ;
	\end{scope}
	\begin{scope}[rotate around = {20:(-90:5.2)}]
	    \draw[cyan, ultra thick] (-90:5.2) .. controls +(-80:1) and +(90:1) .. (1,-10.5)
	.. controls +(-90:1) and +(0:0.5) .. (-90:12)
	.. controls +(180:0.5) and +(-90:1) .. (-1,-10.5)
	.. controls +(90:1) and +(-100:1) .. (-90:5.2);
	\end{scope}
	\draw (0.9,-7.5) node {$c$} ;
	\draw (-1.5,-7) node {1} ;
	\draw (-90:5.2)\DEC;
    \draw (-90:3) node{Figure 3.3};
    \draw (4,-13) node {$\CA_S$} ;
    \end{scope}
	
	\begin{scope}[shift={(0,-40.5)}]
	\draw[ultra thick]plot [smooth,tension=1] coordinates {(3,-4.5) (-90:5.2) (-3,-4.5)};
	\begin{scope}[rotate around = {20:(-90:5.2)}]
	\draw[red] (-90:5.2) .. controls +(-60:4) and +(90:1) .. (3,-11.5)
	.. controls +(-90:2) and +(0:1) .. (-90:14)
	.. controls +(180:1) and +(-90:2) .. (-3,-11.5)
	.. controls +(90:1) and +(-120:4) .. (-90:5.2);
	\end{scope}
	\draw[rotate around = {35:(-90:5.2)}] (-90:7) node {$c$} ;
	\filldraw[white, fill = white] (0,-13.2) circle (10pt);
	\begin{scope}[rotate around = {-20:(-90:5.2)}]
	\draw[red] (-90:5.2) .. controls +(-60:4) and +(90:1) .. (3,-11.5)
	.. controls +(-90:2) and +(0:1) .. (-90:14)
	.. controls +(180:1) and +(-90:2) .. (-3,-11.5)
	.. controls +(90:1) and +(-120:4) .. (-90:5.2);
	\end{scope}
	\draw[rotate around = {-35:(-90:5.2)}] (-90:7) node {$c'$} ;
	\draw (0,-8) node {$d$} ;
	\draw (-90:5.2)\DEC;
    \draw (-4,-14) node {$\CA_S$} ;
    \draw (4,-14) node {$\CA_T$} ;
        \draw (8,-10) node {$\Longleftrightarrow$};
            \draw (0,-3) node{Figure 3.4};
            \draw (16,-3) node{Figure 3.4.1};
            \filldraw[fill=gray!20] (12,-6) -- (20,-6) -- (20,-14) -- (16,-14) -- (16,-10) -- (12,-10) -- (12,-6);
    \foreach \i in {0,4} {
    \foreach \j in {0,4} {
    \draw (12+\i,-10+\j) -- (12+\i,-14+\j);
    \draw (12+\i,-10+\j) -- (16+\i,-10+\j);
    \draw[cyan, ultra thick] (16,-10) .. controls +(-100:3) and +(10:3) .. (12,-14);
    \begin{scope}[rotate around ={45:(16+\i,-10+\j)}]
    \filldraw[thick, fill=gray] (16+\i,-10+\j) .. controls +(145:0.5) and +(90:0.3) .. (14.5+\i,-10+\j)
	.. controls +(-90:0.3) and +(-145:0.5) .. (16+\i,-10+\j);
    \end{scope}
    }
    }
    \draw (20,-6) -- (20,-14);
    \draw (12,-14) -- (20,-14);
    \foreach \i in {0,4,8} {
    \foreach \j in {0,4,8} {
    \draw (12+\i,-14+\j)\DEC;
    }
    }
    \draw (18.5,-9.5) node {$\CA_T$};
    \draw (16.7,-7.5) node {$\CA_S$};
    \draw (16.5,-9.3) node {$d$};
    \draw (15.5,-9.3) node {1};
    \draw (16.5,-10.8) node {$c'$};
    \draw (22,-10) node {or};
    \begin{scope}[shift={(12,0)}]
    \filldraw[fill=gray!20] (12,-6) -- (20,-6) -- (20,-14) -- (16,-14) -- (16,-10) -- (12,-10) -- (12,-6);
            \foreach \i in {0,4} {
    \foreach \j in {0,4} {
    \draw (12+\i,-10+\j) -- (12+\i,-14+\j);
    \draw (12+\i,-10+\j) -- (16+\i,-10+\j);
    \draw[cyan, ultra thick] (16,-10) .. controls +(-170:3) and +(80:3) .. (12,-14);
    \begin{scope}[rotate around ={45:(16+\i,-10+\j)}]
    \filldraw[thick, fill=gray] (16+\i,-10+\j) .. controls +(145:0.5) and +(90:0.3) .. (14.5+\i,-10+\j)
	.. controls +(-90:0.3) and +(-145:0.5) .. (16+\i,-10+\j);
    \end{scope}
    }
    }
    \draw (20,-6) -- (20,-14);
    \draw (12,-14) -- (20,-14);
    \foreach \i in {0,4,8} {
    \foreach \j in {0,4,8} {
    \draw (12+\i,-14+\j)\DEC;
    }
    }
    \draw (18.5,-9.5) node {$\CA_T$};
    \draw (16.7,-7.5) node {$\CA_S$};
    \draw (16.5,-9.3) node {$d$};
    \draw (15.5,-9.3) node {$c$};
    \draw (16.5,-10.8) node {1};
    \end{scope}
    \draw (28,-3) node{Figure 3.4.2};
    \draw (34,-10) node{or};
    \draw (40,-3) node{Figure 3.4.3};
    \begin{scope}[shift={(24,0)}]
    \filldraw[fill=gray!20] (12,-6) -- (20,-6) -- (20,-14) -- (16,-14) -- (16,-10) -- (12,-10) -- (12,-6);
            \foreach \i in {0,4} {
    \foreach \j in {0,4} {
    \draw (12+\i,-10+\j) -- (12+\i,-14+\j);
    \draw (12+\i,-10+\j) -- (16+\i,-10+\j);
    \draw[cyan, ultra thick] (16,-10) .. controls +(-100:1) and +(60:2) .. (14,-14);
    \draw[cyan, ultra thick, dashed] (16,-6) .. controls +(-100:1) and +(60:2) .. (14,-10);
    \draw[cyan, ultra thick] (14,-10)--(12,-14);
    \begin{scope}[rotate around ={45:(16+\i,-10+\j)}]
    \filldraw[thick, fill=gray] (16+\i,-10+\j) .. controls +(145:0.5) and +(90:0.3) .. (14.5+\i,-10+\j)
	.. controls +(-90:0.3) and +(-145:0.5) .. (16+\i,-10+\j);
    \end{scope}
    }
    }
    \draw (20,-6) -- (20,-14);
    \draw (12,-14) -- (20,-14);
    \foreach \i in {0,4,8} {
    \foreach \j in {0,4,8} {
    \draw (12+\i,-14+\j)\DEC;
    }
    }
    \draw (18.5,-9.5) node {$\CA_T$};
    \draw (16.7,-7.5) node {$\CA_S$};
    \draw (16.5,-9.3) node {$d$};
    \draw (15.5,-9.3) node {1};
    \draw (16.5,-10.8) node {1};
    \end{scope}
    \end{scope}

    \begin{scope}[shift={(14,-54)}]
        \draw[ultra thick]plot [smooth,tension=1] coordinates {(3,-4.5) (-90:5.2) (-3,-4.5)};
	\begin{scope}[rotate around = {20:(-90:5.2)}]
	\draw[red] (-90:5.2) .. controls +(-60:4) and +(90:1) .. (3,-11.5)
	.. controls +(-90:2) and +(0:1) .. (-90:14)
	.. controls +(180:1) and +(-90:2) .. (-3,-11.5)
	.. controls +(90:1) and +(-120:4) .. (-90:5.2);
	\end{scope}
	\draw[rotate around = {35:(-90:5.2)}] (-90:7) node {$c$} ;
	\filldraw[white, fill = white] (0,-13.2) circle (10pt);
	\begin{scope}[rotate around = {-20:(-90:5.2)}]
	\draw[red] (-90:5.2) .. controls +(-60:4) and +(90:1) .. (3,-11.5)
	.. controls +(-90:2) and +(0:1) .. (-90:14)
	.. controls +(180:1) and +(-90:2) .. (-3,-11.5)
	.. controls +(90:1) and +(-120:4) .. (-90:5.2);
	\end{scope}
	\draw[rotate around = {-35:(-90:5.2)}] (-90:7) node {$d$} ;
	\draw (0,-8) node {1} ;
	\draw (-90:5.2)\DEC;
    \draw (-90:3) node{Figure 3.5};
    \draw (-4,-14) node {$\CA_T$} ;
    \draw (4,-14) node {$\CA_S$} ;
    \draw (8,-10) node {$\Longleftrightarrow$};

    \filldraw[fill=gray!20] (12,-6) -- (20,-6) -- (20,-14) -- (16,-14) -- (16,-10) -- (12,-10) -- (12,-6);
    \foreach \i in {0,4} {
    \foreach \j in {0,4} {
    \draw (12+\i,-10+\j) -- (12+\i,-14+\j);
    \draw (12+\i,-10+\j) -- (16+\i,-10+\j);
    \draw[cyan, ultra thick] (12,-10) .. controls +(-80:3) and +(170:3) .. (16,-14);
    \begin{scope}[rotate around ={45:(16+\i,-10+\j)}]
    \filldraw[thick, fill=gray] (16+\i,-10+\j) .. controls +(145:0.5) and +(90:0.3) .. (14.5+\i,-10+\j)
	.. controls +(-90:0.3) and +(-145:0.5) .. (16+\i,-10+\j);
    \end{scope}
    }
    }
    \draw (20,-6) -- (20,-14);
    \draw (12,-14) -- (20,-14);
    \foreach \i in {0,4,8} {
    \foreach \j in {0,4,8} {
    \draw (12+\i,-14+\j)\DEC;
    }
    }
    \draw (18.5,-9.5) node {$\CA_S$};
    \draw (16.7,-7.5) node {$\CA_T$};
    \draw (16.5,-9.3) node {$1$};
    \draw (15.5,-9.3) node {$c$};
    \draw (16.5,-10.8) node {$d$};
    \end{scope}
    \end{tikzpicture}
	\caption{Binary-free cases}\label{fig:001}
    \end{figure}

\begin{figure}[htbp]
\begin{tikzpicture}[scale=.3]
\begin{scope}[shift={(6,0)}]
	\draw[ultra thick]plot [smooth,tension=1] coordinates {(3,-4.5) (-90:5.2) (-3,-4.5)};
	\draw[ultra thick,fill=gray!10] (-90:11) ellipse (1.5);
	\draw[red] (-90:5.2) .. controls +(-45:6) and +(-30:5) .. (-90:12.5);
	\draw[red] (-90:5.2) .. controls +(-135:6) and +(-150:5) .. (-90:12.5);
    \draw (3.7,-10) node {$\CA_S$} ;
    \draw (-4,-13) node {$\CA_S'$} ;
    \draw (4,-7) node {$\CA_T$} ;
	\draw[ultra thick]plot [smooth,tension=1] coordinates {(5.4,-12) (5,-10.5) (5.4,-9)};
	\draw[red] (-90:5.2) .. controls +(-20:3) and +(120:3) .. (5,-10.5);
	\draw[cyan, ultra thick] (0,-12.5) .. controls +(-60:3) and +(-120:3) .. (5,-10.5);
	\draw (-90:12.5)\DEC;
	\draw (-90:5.2)\DEC;
    \draw[blue] (-90:9.5)\nn;
    \draw (5,-10.5)\DEC;
    \draw (-90:11) node{$\Vot$};
	\draw (2,-7) node {1} ;
	\draw (-90:5.2)\DEC;
    \draw (-90:3) node{Figure 1.1.1};
\end{scope}

    \begin{scope}[shift={(26,0)}]
        \draw[ultra thick]plot [smooth,tension=1] coordinates {(3,-4.5) (-90:5.2) (-3,-4.5)};
	\draw[ultra thick,fill=gray!10] (-90:11) ellipse (1.5);
	\draw[red] (-90:5.2) .. controls +(-45:6) and +(-30:5) .. (-90:12.5);
	\draw[red] (-90:5.2) .. controls +(-135:6) and +(-150:5) .. (-90:12.5);
    \draw (3,-8) node {$\CA_S'$} ;
    \draw (-2,-14) node {$\CA_S$} ;
    \draw (4,-10.7) node {$\CA_T$} ;
	\draw[ultra thick]plot [smooth,tension=1] coordinates {(5.4,-12) (5,-10.5) (5.4,-9)};
	\draw[red] (0,-12.5) .. controls +(-60:4) and +(-120:3) .. (5,-10.5);
	\draw[cyan, ultra thick] (0,-5.2) .. controls +(-145:14) and +(-100:11) .. (5,-10.5);
	\draw (-90:12.5)\DEC;
	\draw (-90:5.2)\DEC;
    \draw[blue] (-90:9.5)\nn;
    \draw (5,-10.5)\DEC;
    \draw (-90:11) node{$\Vot$};
	\draw (0,-13.5) node {1} ;
	\draw (-90:5.2)\DEC;
    \draw (-90:3) node{Figure 1.1.2};
    \end{scope}

    \begin{scope}[shift={(0,-16)}]
        \draw[ultra thick]plot [smooth,tension=1] coordinates {(3,-4.5) (-90:5.2) (-3,-4.5)};
	\draw[ultra thick,fill=gray!10] (-90:11) ellipse (1.5);
	\draw[red] (-90:5.2) .. controls +(-45:6) and +(-30:5) .. (-90:12.5);
	\draw[red] (-90:5.2) .. controls +(-135:6) and +(-150:5) .. (-90:12.5);
    \draw (2.5,-14) node {$\CA_S$} ;
    \draw (-4,-13) node {$\CA_S'$} ;
    \draw (4,-7) node {$\CA_T$} ;
	\draw[red] (-90:12.5) ..
	controls +(-145:8) and +(-90:8) ..
	(4.5,-11) ..
	controls +(90:3) and +(-20:3) ..
	(0,-5.2);
	\draw[cyan, ultra thick] (0,-12.5) .. controls +(-120:3) and +(180:1) ..
	(0,-16) ..
	controls +(0:1) and +(-60:3) ..
	(0,-12.5);
	\draw (-90:12.5)\DEC;
	\draw (-90:5.2)\DEC;
    \draw[blue] (-90:9.5)\nn;
    \draw (-90:11) node{$\Vot$};
	\draw (2,-7) node {1} ;
	\draw (0,-14) node {$d$};
	\draw (-90:5.2)\DEC;
    \draw (-90:3) node{Figure 1.2.1.1};
    \end{scope}

\begin{scope}[shift={(16,-16)}]
        \draw[ultra thick]plot [smooth,tension=1] coordinates {(3,-4.5) (-90:5.2) (-3,-4.5)};
	\draw[ultra thick,fill=gray!10] (-90:11) ellipse (1.5);
	\draw[red] (-90:5.2) .. controls +(-45:6) and +(-30:5) .. (-90:12.5);
	\draw[red] (-90:5.2) .. controls +(-135:6) and +(-150:5) .. (-90:12.5);
    \draw (2.5,-14) node {$\CA_S'$} ;
    \draw (-4,-13) node {$\CA_S$} ;
    \draw (5.4,-10) node {$\CA_T$} ;
	\draw[red] (-90:12.5) ..
	controls +(-90:5) and +(-90:5) ..
	(4.5,-11) ..
	controls +(90:3) and +(-20:3) ..
	(0,-5.2);
	\draw[cyan, ultra thick] (0,-5.2)
        .. controls +(-170:9) and +(170:6) .. (-3,-16)
        .. controls +(-10:1) and +(-170:1) .. (3,-16)
        .. controls +(10:6) and +(-10:9) .. (0,-5.2);
	\draw (-90:12.5)\DEC;
	\draw (-90:5.2)\DEC;
    \draw[blue] (-90:9.5)\nn;
    \draw (-90:11) node{$\Vot$};
	\draw (-0.7,-13.6) node {1} ;
	\draw[Green,->, thick,>=stealth, shift={(0,-5.2)}]plot [smooth,tension=1] coordinates
    {(-20:1) (-72:1.1) (-135:1)};
	\draw (0.4,-6.8) node {$d$};
	\draw (-90:5.2)\DEC;
    \draw (-90:3) node{Figure 1.2.1.2};
    \end{scope}

    \begin{scope}[shift={(32,-16)}]
	\draw[ultra thick]plot [smooth,tension=1] coordinates {(3,-4.5) (-90:5.2) (-3,-4.5)};
	\draw[red] (0,-5.2)
        .. controls +(-170:7) and +(180:6) .. (0,-14)
        .. controls +(0:4) and +(-50:3) .. (4.7,-10.6);
	\begin{scope}[rotate around = {40:(-90:5.2)}]
	\draw[ultra thick,fill=gray!10] (-90:11) ellipse (1.5);
	\draw[red] (-90:5.2) .. controls +(-45:6) and +(-30:5) .. (-90:12.5);
	\draw[red] (-90:5.2) .. controls +(-135:6) and +(-150:5) .. (-90:12.5);
	\draw (-90:12.5)\DEC;
	\draw[blue] (-90:9.5)\nn;
	\end{scope}
	\begin{scope}[rotate around = {-40:(-90:5.2)}]
	\draw[cyan, ultra thick] (-90:5.2) .. controls +(-70:2) and +(90:1) .. (1.4,-9)
	.. controls +(-90:1) and +(0:1) .. (-90:11)
	.. controls +(180:1) and +(-90:1) .. (-1.4,-9)
	.. controls +(90:1) and +(-110:2) .. (-90:5.2);
	\end{scope}
	\draw[rotate around = {-40:(-90:5.2)}] (-90:7) node {$d$} ;
	\draw (4.6,-12) node {1} ;
	\draw (-90:5.2)\DEC;
    \draw (-90:3) node{Figure 1.2.1.3};
    \draw (0.5,-12) node {$\CA_S$} ;
    \draw (7,-7.2) node {$\CA_S'$} ;
    \draw (2,-14.7) node {$\CA_T$} ;
    \draw (3.8,-9.7) node {$\Vot$} ;
    \end{scope}

    \begin{scope}[shift={(0,-32)}]
        \draw[ultra thick]plot [smooth,tension=1] coordinates {(3,-4.5) (-90:5.2) (-3,-4.5)};
	\draw[ultra thick,fill=gray!10] (-90:11) ellipse (1.5);
	\draw[red] (-90:5.2) .. controls +(-45:6) and +(-30:5) .. (-90:12.5);
	\draw[red] (-90:5.2) .. controls +(-135:6) and +(-150:5) .. (-90:12.5);
	\draw[red] (-90:5.2) .. controls +(-20:4) and +(80:2) .. (5,-11)
	.. controls +(-100:3) and +(0:2) .. (0,-14.5)
	.. controls +(180:2) and +(-80:3) .. (-5,-11)
	.. controls +(100:2) and +(-160:4) .. (-90:5.2);
	\draw[cyan, ultra thick] (0,-5.2) .. controls +(-145:3) and +(90:3) ..
	(-4,-11) ..
	controls +(-90:2) and +(-120:3) ..
	(0,-12.5);
	\draw[Green,->, thick,>=stealth, shift={(0,-5.2)}]plot [smooth,tension=1] coordinates
    {(-45:1.1) (-102:1.2) (-160:1.1)};
    \draw (-90:12.5)\DEC;
	\draw (-90:5.2)\DEC;
    \draw[blue] (-90:9.5)\nn;
    \draw (4,-11) node {$\CA_S$} ;
    \draw (-2.5,-11) node {$\CA_S'$} ;
    \draw (4.5,-7) node {$\CA_T$} ;
    \draw (-90:11) node{$\Vot$};
	\draw (2,-6.7) node {1} ;
	\draw (-0.5,-7) node {$d$} ;
	\draw (-90:5.2)\DEC;
    \draw (0,-3) node{Figure 1.2.2.1};
    \end{scope}

    \begin{scope}[shift={(16,-32)}]
	\draw[ultra thick]plot [smooth,tension=1] coordinates {(3,-4.5) (-90:5.2) (-3,-4.5)};
	\draw[cyan, ultra thick] (-4.7,-10.7)
        .. controls +(-100:4) and +(180:4) .. (3,-14)
        .. controls +(0:6) and +(-10:9) .. (0,-5.2);
	\begin{scope}[rotate around = {-40:(-90:5.2)}]
	\draw[ultra thick,fill=gray!10] (-90:11) ellipse (1.5);
	\draw[red] (-90:5.2) .. controls +(-45:6) and +(-30:5) .. (-90:12.5);
	\draw[red] (-90:5.2) .. controls +(-135:6) and +(-150:5) .. (-90:12.5);
	\draw (-90:12.5)\DEC;
	\draw[blue] (-90:9.5)\nn;
	\end{scope}
	\begin{scope}[rotate around = {40:(-90:5.2)}]
	\draw[red] (-90:5.2) .. controls +(-60:4) and +(90:1) .. (2.5,-11.5)
	.. controls +(-90:2) and +(0:1) .. (-90:14)
	.. controls +(180:1) and +(-90:2) .. (-2.5,-11.5)
	.. controls +(90:1) and +(-120:4) .. (-90:5.2);
	\end{scope}
	\draw[rotate around = {40:(-90:5.2)}] (-90:7) node {$d$} ;
	\draw (0.2,-8) node {1} ;
	\draw (-90:5.2)\DEC;
    \draw (-90:3) node{Figure 1.2.2.2};
    \draw (-0.3,-12) node {$\CA_S$} ;
    \draw (-7.5,-10) node {$\CA_S'$} ;
    \draw (2,-12.5) node {$\CA_T$} ;
    \draw (-3.8,-9.5) node {$\Vot$} ;
    \end{scope}

\begin{scope}[shift={(32,-32)}]
        \draw[ultra thick]plot [smooth,tension=1] coordinates {(3,-4.5) (-90:5.2) (-3,-4.5)};
	\draw[ultra thick,fill=gray!10] (-90:11) ellipse (1.5);
	\draw[red] (-90:5.2) .. controls +(-45:6) and +(-30:5) .. (-90:12.5);
	\draw[red] (-90:5.2) .. controls +(-135:6) and +(-150:5) .. (-90:12.5);
    \draw (-1.7,-14.8) node {$\CA_T$} ;
    \draw (-3.6,-10) node {$\CA_S$} ;
    \draw (4,-10) node {$\CA_S'$} ;
	\draw[cyan, ultra thick] (-90:12.5) ..
	controls +(-35:8) and +(-90:8) ..
	(-4.5,-11) ..
	controls +(90:3) and +(-160:3) ..
	(0,-5.2);
	\draw[red] (0,-12.5) .. controls +(-120:3) and +(180:1) ..
	(0,-16) ..
	controls +(0:1) and +(-60:3) ..
	(0,-12.5);
	\draw (-90:12.5)\DEC;
	\draw (-90:5.2)\DEC;
    \draw[blue] (-90:9.5)\nn;
    \draw (-90:11) node{$\Vot$};
	\draw (-1.2,-13.7) node {1} ;
	\draw (0,-14) node {$d$};
	\draw (-90:5.2)\DEC;
    \draw (-90:3) node{Figure 1.2.2.3};
    \end{scope}
    
    \begin{scope}[shift={(0,-48)}]
	\draw[ultra thick]plot [smooth,tension=1] coordinates {(3,-4.5) (-90:5.2) (-3,-4.5)};
	\draw[cyan, ultra thick] (-5.2,-10.5)
        .. controls +(-100:4) and +(180:4) .. (0,-14)
        .. controls +(0:4) and +(-80:4) .. (5.2,-10.5);
	\begin{scope}[rotate around = {-45:(-90:5.2)}]
	\draw[ultra thick,fill=gray!10] (-90:11) ellipse (1.5);
	\draw[red] (-90:5.2) .. controls +(-50:6) and +(-30:5) .. (-90:12.5);
	\draw[red] (-90:5.2) .. controls +(-130:6) and +(-150:5) .. (-90:12.5);
	\draw (-90:12.5)\DEC;
	\draw[blue] (-90:9.5)\nn;
	\draw (3.8,-10) node {$\CA_S$} ;
    \draw (-3.8,-10) node {$\CA_S'$} ;
	\end{scope}
	\begin{scope}[rotate around = {45:(-90:5.2)}]
	\draw[ultra thick,fill=gray!10] (-90:11) ellipse (1.5);
	\draw[red] (-90:5.2) .. controls +(-50:6) and +(-30:5) .. (-90:12.5);
	\draw[red] (-90:5.2) .. controls +(-130:6) and +(-150:5) .. (-90:12.5);
	\draw (-90:12.5)\DEC;
	\draw[blue] (-90:9.5)\nn;
	\draw (-3.8,-11) node {$\CA_T$} ;
    \draw (3.8,-11) node {$\CA_T'$} ;
	\end{scope}
	\draw (0,-8) node {1} ;
	\draw (-90:5.2)\DEC;
    \draw (-90:3) node{Figure 2.1};
    \draw (-4,-9.4) node {$\Vot_1$} ;
    \draw (4.2,-9.4) node {$\Vot_2$} ;
    \end{scope}

     \begin{scope}[shift={(16,-48)}]
	\draw[ultra thick]plot [smooth,tension=1] coordinates {(3,-4.5) (-90:5.2) (-3,-4.5)};
    \draw[cyan, ultra thick] (0,-5.2)
        .. controls +(-30:6) and +(90:3)
        .. (6,-16)
        .. controls +(-90:4) and +(-60:5) .. (0,-19.8);
	\draw[ultra thick,fill=gray!10] (-90:11) ellipse (1.5);
	\draw[red] (-90:5.2) .. controls +(-50:6) and +(-30:5) .. (-90:12.5);
	\draw[red] (-90:5.2) .. controls +(-130:6) and +(-150:5) .. (-90:12.5);
	\draw[blue] (-90:9.5)\nn;
	\draw (3.6,-10) node {$\CA_T$} ;
    \draw (-3.8,-10) node {$\CA_T'$} ;
	\begin{scope}[shift = {(0,-7.3)}]
	\draw[ultra thick,fill=gray!10] (-90:11) ellipse (1.5);
	\draw[red] (-90:5.2) .. controls +(-50:6) and +(-30:5) .. (-90:12.5);
	\draw[red] (-90:5.2) .. controls +(-130:6) and +(-150:5) .. (-90:12.5);
	\draw[blue] (-90:9.5)\nn;
	\draw (-3.8,-10) node {$\CA_S'$} ;
    \draw (3.6,-10) node {$\CA_S$} ;
	\end{scope}
	\draw (1.6,-13.7) node {1} ;
	\draw (-90:5.2)\DEC;
	\draw (-90:12.5)\DEC;
	\draw (-90:19.8)\DEC;
    \draw (-90:3) node{Figure 2.2};
    \draw (0,-11) node {$\Vot_1$} ;
    \draw (0,-18.3) node {$\Vot_2$} ;
    \end{scope}

    \begin{scope}[shift={(32,-48)}]
	\draw[ultra thick]plot [smooth,tension=1] coordinates {(3,-4.5) (-90:5.2) (-3,-4.5)};
    \draw[cyan, ultra thick] (0,-5.2)
        .. controls +(-150:6) and +(90:3)
        .. (-6,-16)
        .. controls +(-90:4) and +(-120:5) .. (0,-19.8);
	\draw[ultra thick,fill=gray!10] (-90:11) ellipse (1.5);
	\draw[red] (-90:5.2) .. controls +(-50:6) and +(-30:5) .. (-90:12.5);
	\draw[red] (-90:5.2) .. controls +(-130:6) and +(-150:5) .. (-90:12.5);
	\draw[blue] (-90:9.5)\nn;
	\draw (3.8,-10) node {$\CA_S'$} ;
    \draw (-3.5,-10) node {$\CA_S$} ;
	\begin{scope}[shift = {(0,-7.3)}]
	\draw[ultra thick,fill=gray!10] (-90:11) ellipse (1.5);
	\draw[red] (-90:5.2) .. controls +(-50:6) and +(-30:5) .. (-90:12.5);
	\draw[red] (-90:5.2) .. controls +(-130:6) and +(-150:5) .. (-90:12.5);
	\draw[blue] (-90:9.5)\nn;
	\draw (-3.5,-10) node {$\CA_T$} ;
    \draw (3.8,-10) node {$\CA_T'$} ;
	\end{scope}
	\draw (-1.6,-13.7) node {1} ;
	\draw (-90:5.2)\DEC;
	\draw (-90:12.5)\DEC;
	\draw (-90:19.8)\DEC;
    \draw (-90:3) node{Figure 2.3};
    \draw (0,-11) node {$\Vot_1$} ;
    \draw (0,-18.3) node {$\Vot_2$} ;
    \end{scope}
    \end{tikzpicture}
    \caption{Binary cases}
\label{fig:2}
    \end{figure}

\clearpage


\end{document}